\newcommand{\<}{\langle}
\renewcommand{\>}{\rangle}
\renewcommand\tilde\wtilde
\renewcommand\hat\what
\newtheorem{theorem}{Theorem}[section]
\newtheorem{corollary}[theorem]{Corollary}
\theoremstyle{definition}
\newtheorem{definition}[theorem]{Definition}
\newtheorem{assumption}[theorem]{Assumption}
\theoremstyle{remark}
\newtheorem{remark}[theorem]{Remark}
\DeclareMathOperator*{\argmin}{argmin}
\newcommand{\bq}{\begin{equation}}
\newcommand{\eq}{\end{equation}}
\newcommand{\R}{\mathbb{R}}
\newcommand{\bO}{\mathcal{O}}
\newcommand{\tr}{\text{tr}}
\newcommand{\MA}{Monge-Amp\`ere\xspace}
\newcommand{\df}{\delta f}
\newcommand{\Lf}{\mathcal{L}}
\renewcommand{\diag}{\text{diag}}
\newcommand*\Laplace{\mathop{}\!\mathbin\bigtriangleup}
\algnewcommand{\LineComment}[1]{\State \(\triangleright\) #1}
\begin{document} 
\mleftright


\title{Optimal Transport Based Seismic Inversion:\\Beyond Cycle
  Skipping}

\author{Bj\"orn Engquist}{The University of Texas at Austin}
\author{Yunan Yang}{New York University}





\begin{abstract}
  Full-waveform inversion (FWI) is today a standard process for the
  inverse problem of seismic imaging. PDE-constrained optimization is
  used to determine unknown parameters in a wave equation that
  represent geophysical properties. The objective function measures
  the misfit between the observed data and the calculated synthetic
  data, and it has traditionally been the least-squares norm. In a
  sequence of papers, we introduced the Wasserstein metric from
  optimal transport as an alternative misfit function for mitigating
  the so-called cycle skipping, which is the trapping of the
  optimization process in local minima. In this paper, we first give a
  sharper theorem regarding the convexity of the Wasserstein metric as
  the objective function.  We then focus on two new issues. One is the
  necessary normalization of turning seismic signals into probability
  measures such that the theory of optimal transport applies. The
  other, which is beyond cycle skipping, is the inversion for
  parameters below reflecting interfaces.  For the first, we propose a
  class of normalizations and prove several favorable properties for
  this class. For the latter, we demonstrate that FWI using optimal
  transport can recover geophysical properties from domains where no
  seismic waves travel through. We finally illustrate these properties
  by the realistic application of imaging salt inclusions, which has
  been a significant challenge in exploration geophysics.\\
\end{abstract}

\maketitle






\section{Introduction}
The goal in seismic exploration is to estimate essential geophysical
properties, most commonly the wave velocity, based on the observed
data. The development of human-made seismic sources and advanced
recording devices facilitate seismic inversion using entire wavefields
in time and space rather than merely travel time information as in
classical seismology. The computational technique full-waveform
inversion (FWI)~\cite{lailly1983seismic,TarantolaInversion} was a
breakthrough in seismic imaging, and it follows the established
strategy of a partial differential equation (PDE) constrained
optimization. FWI can achieve results with stunning clarity and
resolution~\cite{Virieux2017}. Unknown parameters in a wave equation
representing geophysical properties are determined by minimizing the
misfit between the observed and PDE-simulated data, i.e.,
\bq \label{eq:fwi} m^* = \argmin\limits_m \{J(f(m), g)+ \mathscr R(m)
\}, \eq where $m$ is the model parameter, which can be seen as a
function or discrete as a vector. The misfit $J$ is the objective
function, $f(m)$ is the PDE-simulated data given model parameter $m$,
$g$ is the observed data, and $\mathscr R(m)$ represents the added
regularization. In both time~\cite{TarantolaInversion} and frequency
domain~\cite{pratt1990inverse1}, the least-squares norm
$J(f,g)=\|f-g\|^2_2$ has been the most widely used misfit function,
which we will hereafter denote as $L^2$. In this paper, we focus on
the effects of a different choice of the objective function $J$ and
avoid adding any regularization $\mathscr R(m)$. This is to focus on
the properties of the misfit function.

\begin{figure}
  \subfloat[Observed data
  $g$]{\includegraphics[width=0.33\textwidth]{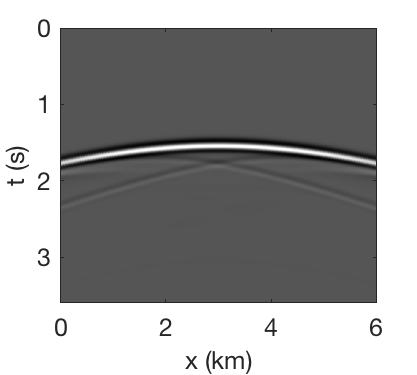}\label{fig:ckdata1}}\hfill
  \subfloat[Synthetic data $f$]
  {\includegraphics[width=0.33\textwidth]{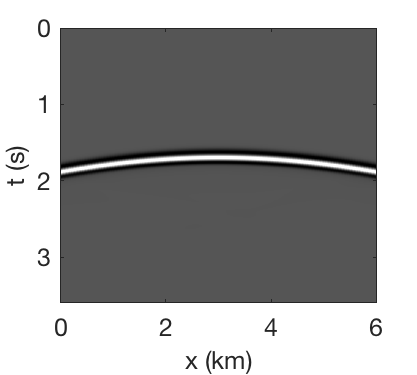}\label{fig:ckdata2}}\hfill
  \subfloat[The difference $f-g$]
  {\includegraphics[width=0.33\textwidth]{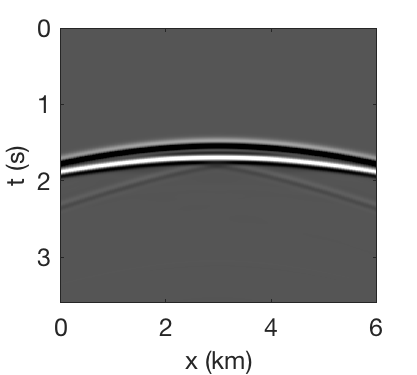}\label{fig:ckdiff}}\\
  \subfloat[Comparison of the traces at $x=3$ km]
  {\includegraphics[width=0.95\textwidth]{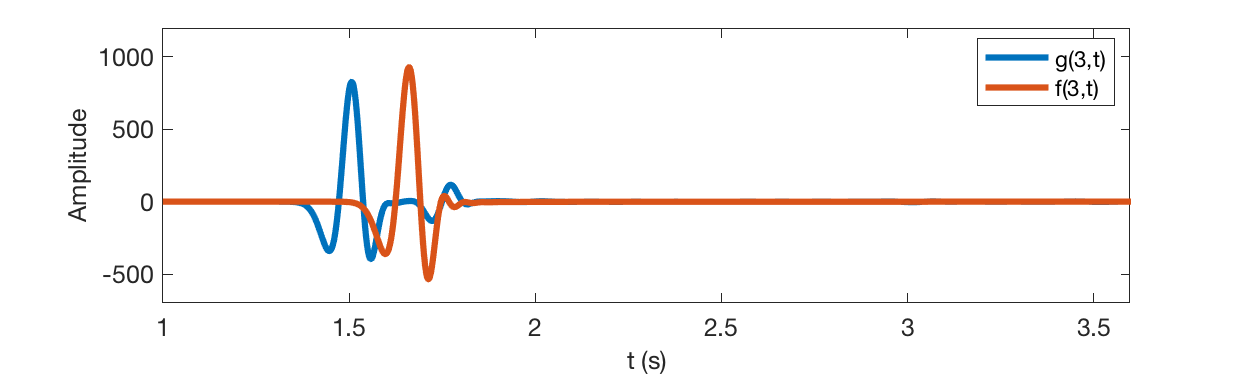}\label{fig:ckcomp}}
  \caption{(a)~The observed data $g$; (b)~the synthetic data $f$;
    (c)~the difference $f-g$; (d)~the comparison between two traces at
    $x = 3\,$km, i.e., $f(3,t)$ and $g(3,t)$. Inversion using the
    $L^2$ norm suffers from cycle-skipping issues in this scenario,
    which is further discussed in
    Section~\ref{sec:cheese}.}\label{fig:ck}
\vspace{-1\baselineskip}
\end{figure}

It is, however, well-known that FWI based on the $L^2$~norm is
sensitive to the initial model, the data spectrum, and the noise in
the measurement~\cite{virieux2009overview}.  Cycle skipping can occur
when the phase mismatch between the two wavelike signals is greater
than half of the wavelength. The fastest way to decrease the
$L^2$~norm is to match the next cycle instead of the current one,
which can lead to an incorrectly updated model parameter. It is a
dominant type of local-minima trapping in seismic inversion due to the
oscillatory nature of the seismic waves. For example,
Figure~\ref{fig:ckdata1} and Figure~\ref{fig:ckdata2} show two
datasets, which are 2D wavefields measured at the upper
boundary. Their phase mismatch is about one wavelength;
see~Figure~\ref{fig:ckdiff} and~Figure~\ref{fig:ckcomp}. $L^2$-based
inversion for this case suffers from cycle-skipping issues. We will
discuss inversions based on this example in
Section~\ref{sec:cheese}. When the velocity in the model is too slow
compared to the true value, the simulated signal will arrive later
than the observed true signal. It is, therefore, natural to study the
effects on the mismatch from shifts between the signals.

\begin{figure}
  \centering \subfloat[2nd order derivative of
  Gaussian]{\includegraphics[width=0.45\textwidth]{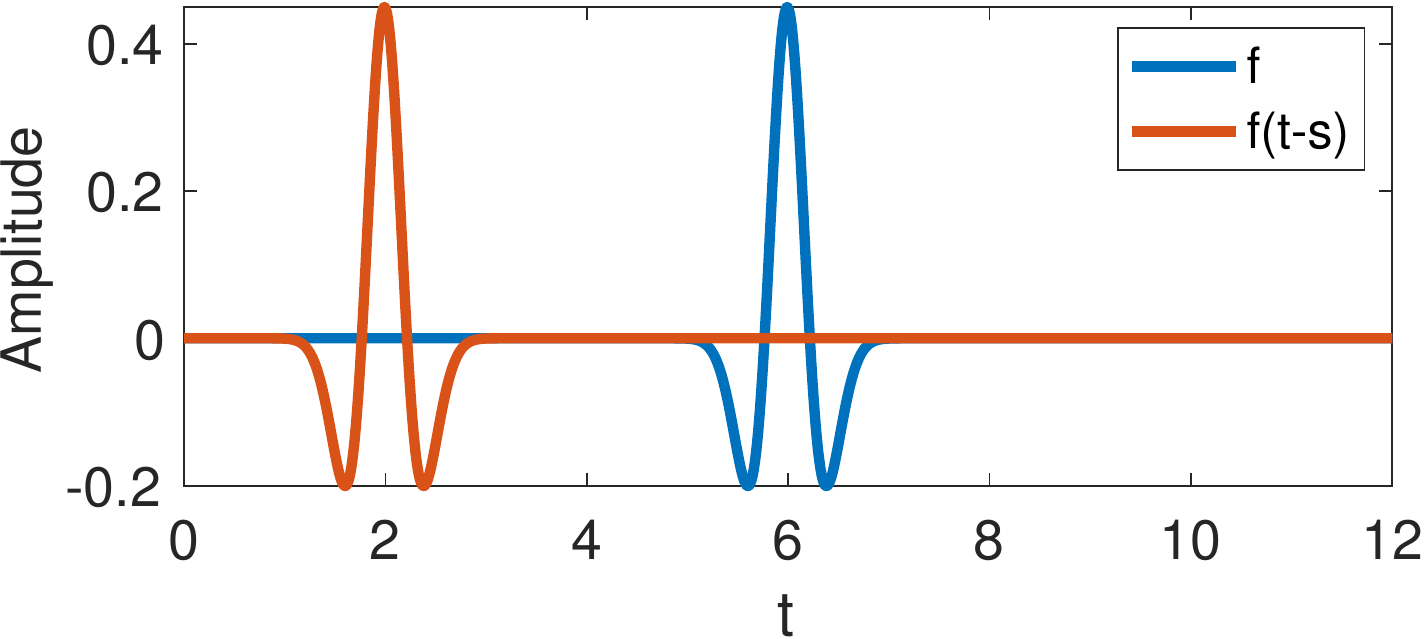}\label{fig:2_ricker_signal}}\hspace{1cm}
  \subfloat[4th order derivative of Gaussian]{\includegraphics[width=0.45\textwidth]{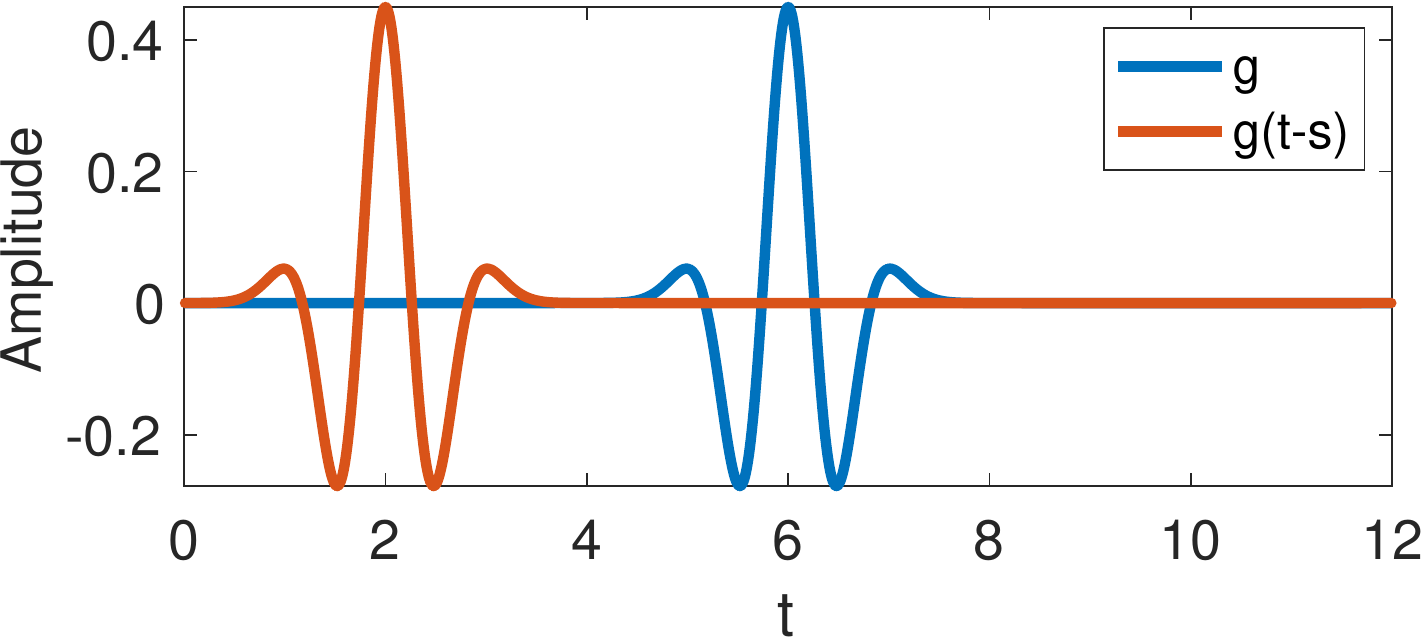}\label{fig:2_ricker_signal2}}\\
  \subfloat[$L^2$ misfit]
  {\includegraphics[width=0.45\textwidth]{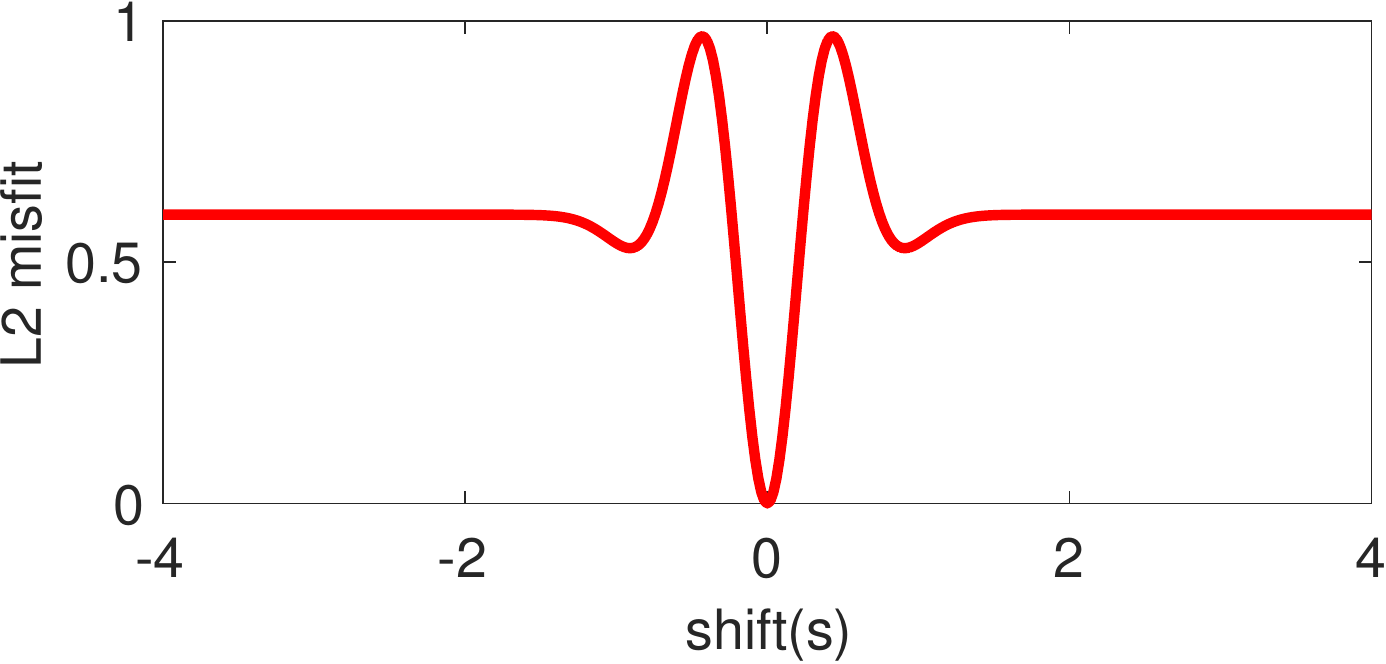}\label{fig:2_ricker_L2}}\hspace{1cm}
  \subfloat[$W_2$ misfit]
  {\includegraphics[width=0.45\textwidth]{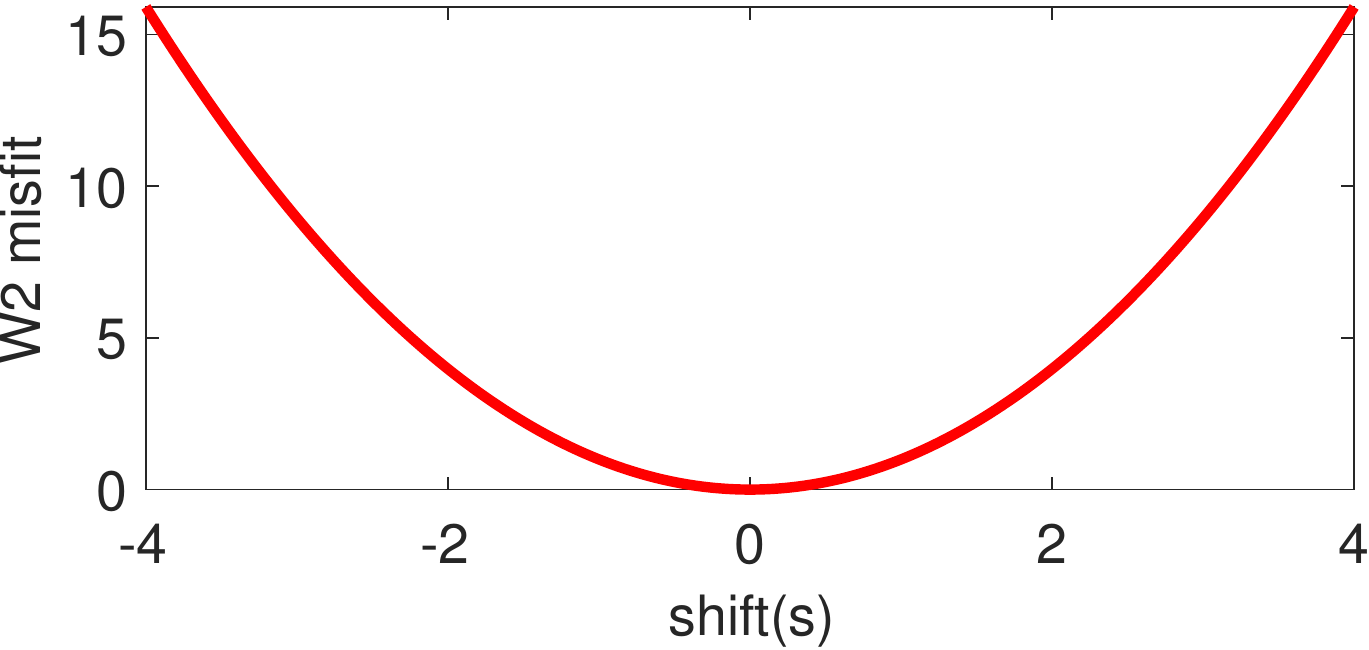}\label{fig:2_ricker_W2}}\\
  \subfloat[$L^2$ misfit]
  {\includegraphics[width=0.45\textwidth]{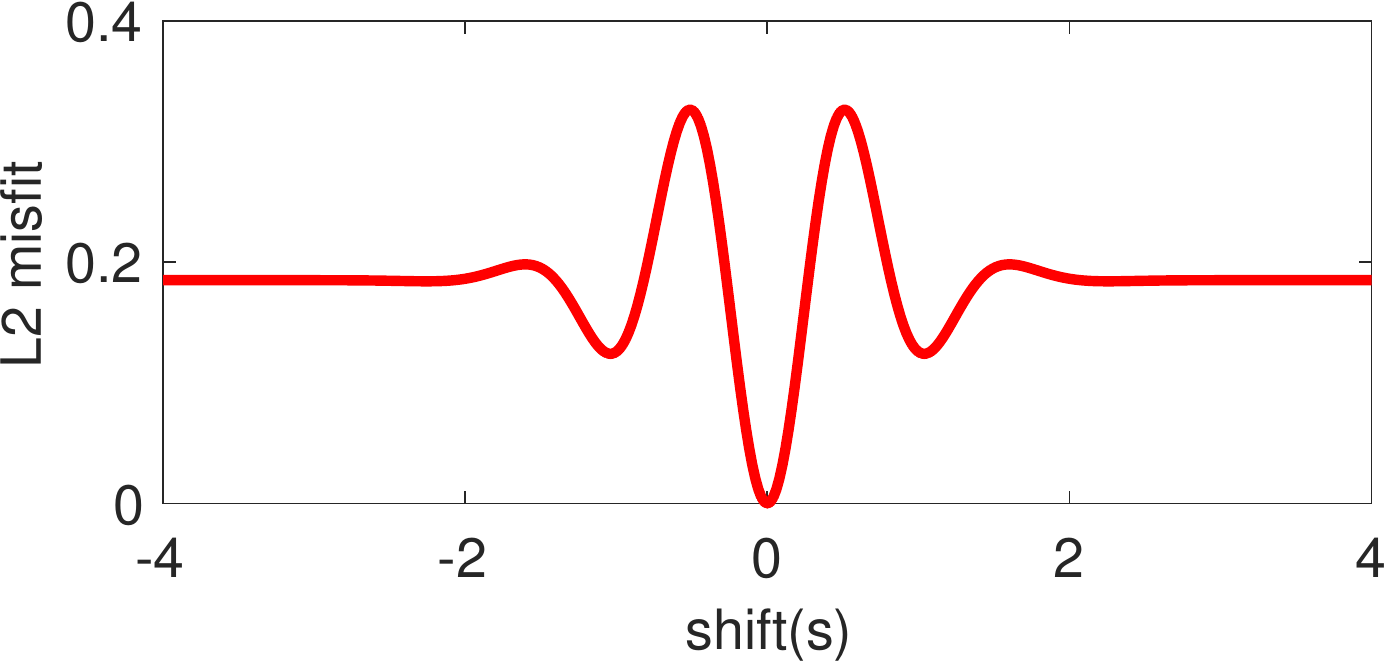}\label{fig:2_ricker_L22}}\hspace{1cm}
  \subfloat[$W_2$ misfit]
  {\includegraphics[width=0.45\textwidth]{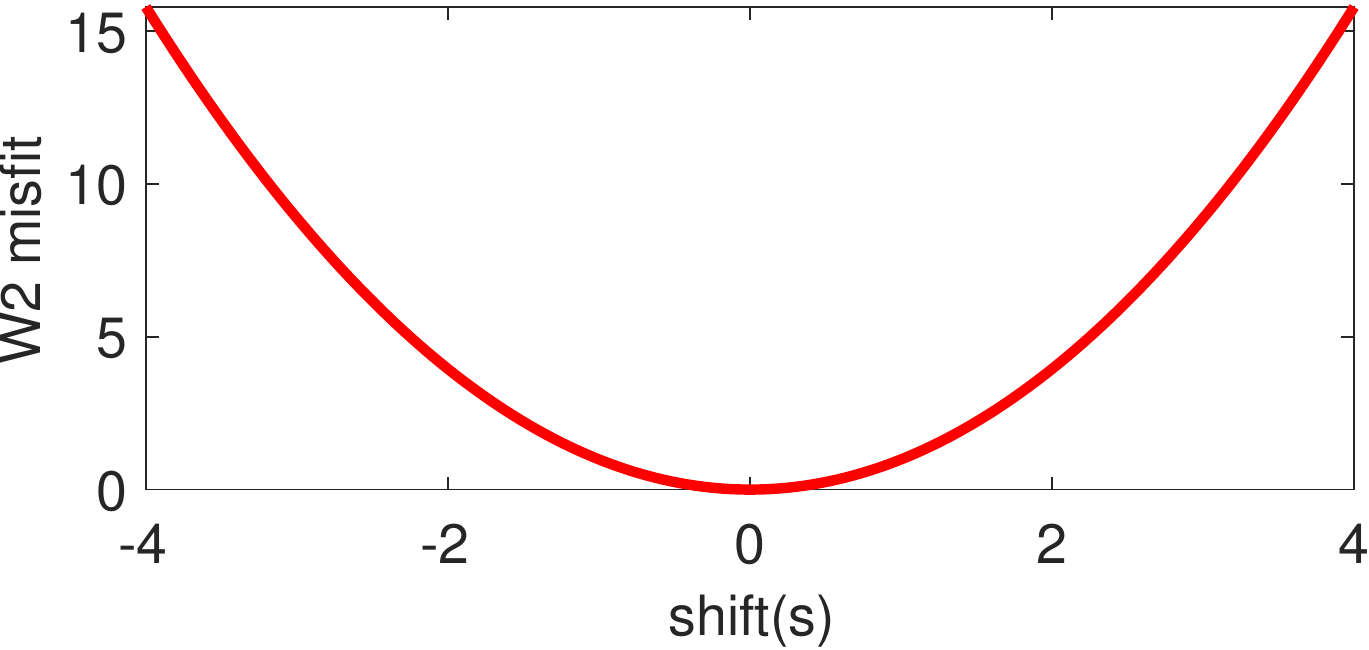}\label{fig:2_ricker_W22}}
  \caption{(a)~Ricker wavelet $f(t)$ and its shift $f(t-s)$;
    (b)~signal $g(t)$ and its shift $g(t-s)$;~(c)~the $L^2$ norm
    and~(d)~the $W_2$ distance between $f(t)$ and $f(t-s)$ in terms of
    shift $s$;~(e)~the $L^2$ norm and~(f)~the $W_2$ distance between
    $g(t)$ and $g(t-s)$ as a function of $s$. The exponential
    scaling~\eqref{eq:exp} is applied to normalize the signals for the
    $W_2$ computation; see
    Section~\ref{sec:Data_Normalization}.}\label{fig:2_ricker_all}
\end{figure}

The quadratic Wasserstein distance ($W_2$) is ideal for dealing with
this type of problem as it has perfect convexity with respect to
translation and dilation (Theorem~\ref{thm: biconvex}). We will prove
the theorem in Section~\ref{sec:cvx} in a new and more general
form. The convexity was the original motivation for us to introduce
the Wasserstein distance as the misfit function in seismic
inversion~\cite{EFWass, engquist2016optimal}, which has now been
picked up and developed in parallel by many other research groups and
generalized to other Wasserstein metric beyond
$W_2$~\cite{W1_2D,W1_3D,chen2017quadratic,sun2019stereo}. Figure~\ref{fig:2_ricker_all}
is an illustration of the convexity of the $W_2$ metric regarding data
translation. The signals used in this example are the Ricker
wavelet~\cite{ricker1943further} and the more oscillatory fourth-order
derivative of Gaussian. The $W_2$ metric precisely captures the data
shift $s$ as the misfit is $s^2$ independent of the profile of the
signal. It is not the case for the $L^2$ norm whose basin of
attraction depends on the spectral property of the signal.

Although the original goal of introducing optimal transport was to
mitigate cycle skipping as discussed above, we will see in this paper
that there are other good properties beyond reducing cycle
skipping. These properties will be divided into two parts. The first
is the effect of data normalization~\cite{Survey2}, which is an
essential preprocessing step of transforming seismic signals to
probability densities. In optimal transport theory, there are two main
requirements for functions $f$ and $g$ that are compactly supported on
a domain $\Pi$:
\[
  f\geq 0,\quad g\geq 0,\quad \<f\>  = \int_{\Pi} f =\int_{\Pi} g = \<g\>  .
\]
Since these constraints are not expected for seismic signals,
different approaches have been promoted to tackle this issue. There
exists a significant amount of work from the applied mathematics
community to generalize optimal transport to the so-called unbalanced
optimal transport
\cite{zhou2018wasserstein,gangbo2019unnormalized,chizat2015unbalanced}. Regarding
the nonpositivity, the Kantorovich-Rubinstein norm was proposed to
approximate the 1-Wasserstein metric~\cite{W1_2D}, and mapping seismic
signals into a graph space by increasing the dimensionality of the
optimal transport problem by $1$ is also demonstrated to be a feasible
solution~\cite{metivier2018optimal,metivier2019graph}.

Another way to achieve data positivity and mass conservation is to
directly transform the seismic data into probability densities by
linear or nonlinear scaling
functions~\cite{qiu2017full,Survey2,Survey1,yang2017application}. In
this paper, we focus on the fundamental properties of such data
normalizations on $W_2$-based
inversion. In~\cite{yang2017application}, we normalized the signals by
adding a constant and then scaling: \bq\label{eq:linear} \wtilde{f} =
\frac{f + b}{\<f+b\>},\quad \tilde{g} = \frac{g + b}{\<g+b\>},\quad b>0.
\eq An exponential based normalization (equation~\eqref{eq:exp}) was
proposed in~\cite{qiu2017full}. Later, the softplus function defined
in equation~\eqref{eq:softplus}~\cite{glorot2011deep} as a more stable
version of the exponential scaling soon became popular in practice:
\begin{equation}\label{eq:exp} \tilde{f} = \frac{\exp(bf) }{\<\exp(bf)\>},\quad
\tilde{g} = \frac{\exp(bg)}{\<\exp(bg)\>},\quad b>0.  \end{equation}
\begin{equation}\label{eq:softplus} \tilde{f} =
\frac{\log(\exp(bf)+1)}{\<\log(\exp(bf)+1)\>},\quad \tilde{g} =
\frac{\log(\exp(bg)+1)}{\<\log(\exp(bg)+1)\>},\quad b>0.  \end{equation} In
Figure~\ref{fig:2_ricker_all}, the exponential
normalization~\eqref{eq:exp} is applied to transform the signed
functions into probability distributions before the computation of the
$W_2$ metric.

We remark that~\eqref{eq:exp} and~\eqref{eq:softplus} suppress the
negative parts of $f$, which works well in most experiments, but it is
also possible to add the objective function with the
normalization~\eqref{eq:exp} and~\eqref{eq:softplus} applied to $-f$
to avoid biasing towards either side. Although the linear
normalization~\eqref{eq:linear} does not give a convex misfit function
with respect to simple shifts~\cite{yangletter}, it works remarkably
well in realistic large-scale
examples~\cite{yang2017application}. Earlier, adding a constant to the
signal was to guarantee a positive function, but empirical
observations motivate us to continue studying the positive influences
of certain data scaling methods. In Assumption~\ref{ass:DN}, we
summarize several essential features for which we can later prove
several desirable properties. This class of normalization methods
allows us to apply the Wasserstein distance to signed signals, which
can thus be seen as a type of \textit{unbalanced optimal transport}.

We prove that the Wasserstein distance is still a metric
$d(f,g) = W_2(\wtilde{f}, \tilde{g})$ in
Theorem~\ref{thm:metric}. Also, by adding a positive constant to the
signals, one turns $W_2$ into a ``Huber-type'' norm
(Theorem~\ref{thm:Huber}). Researchers have studied the robustness of
the Huber norm~\cite{huber1973robust}, which combines the best
properties of the $\ell^2$ norm and the $\ell^1$ norm by being strongly convex
when close to the target or the minimum and less steep for extreme
values or outliers. For seismic inversion, the ``Huber-type'' property
means irrelevant seismic signals that are far apart in time will not
excessively influence the optimal transport map as well as the misfit
function. By adding a positive constant to the normalized data, we
could guarantee the regularity of the optimal map between the
synthetic and the observed data~(Theorem~\ref{thm:map_smooth}) and
consequently enhance the low-frequency contents in the gradient.

The second topic beyond cycle skipping is the remarkable property of
$W_2$ in producing useful information from below the lowest reflecting
interface. This is one part of the Earth from which no seismic waves
return to the surface to be recorded. The most common type of recorded
data in this scenario is seismic reflection. Reflections carry
essential information of the deep region in the subsurface, especially
when there are no transmission waves or other refracted waves
traveling through due to a narrow range of the recording. Conventional
$L^2$-based FWI using reflection data has been problematic in the
absence of a highly accurate initial model. In simple cases, some
recovery is still possible, but it usually takes thousands of
iterations. The entire scheme is often stalled because the
high-wavenumber model features updated by reflections slow down the
reconstruction of the missing low-wavenumber components in the
velocity. This issue can be mitigated by using the $W_2$-based
inversion because of its sensitivity to small amplitudes and the
low-frequency bias.  We will show several tests, including the salt
body inversion, that partial inversion for velocity below the deepest
reflecting layer is still possible by using the $W_2$ metric. It is
another significant advantage of applying optimal transport to seismic
inversion beyond reducing local minima.

The focus in this paper is on the properties of the $W_2$-based
objective function in full-waveform inversion, and the mathematical
analysis here plays an important role. Therefore, the numerical
examples are straightforward using the same wave propagator to
generate both the synthetic and the observed data and without
regularization or postprocessing. In a realistic setting, the wave
source for the synthetic data is only an estimation. There are also
modeling and numerical errors in the wave simulation as well as noisy
data. What makes us confident of the practical value of the techniques
discussed here is the emerging popularity in the industry and
successful application to real field data, which have been
reported~\cite{poncet2018fwi,Ramos2018,pladys2019assessment}. We
include one numerical example to show the robustness of $W_2$-based
inversion in Section~\ref{sec:6} by using a perturbed synthetic
source and adding correlated data noise.

\section{Background}
The primary purpose of this section is to present relevant background
on two important topics involved in this paper, full-waveform
inversion and optimal transport. We will also briefly review the
adjoint-state method, which is a computationally efficient technique
in solving large-scale inverse problems through local optimization
algorithms.

\begin{figure}
  \centering \includegraphics[width=0.6\textwidth]{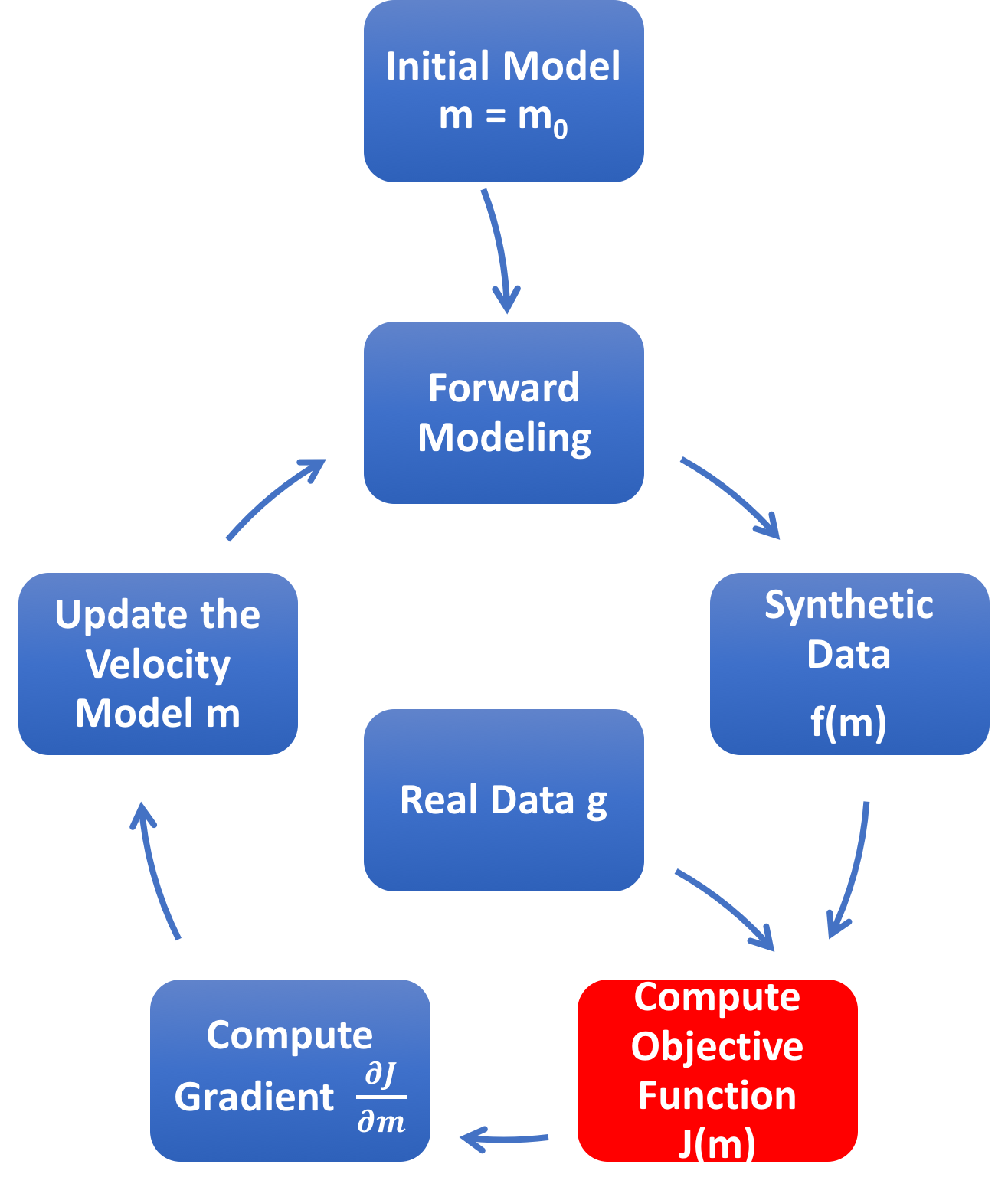}
  \caption{A diagram of FWI as an optimization problem. The step in
    red is where we propose to use the Wasserstein
    metric.}\label{fig:FWI_flow}
\vspace{-3.5ex}
\end{figure}

\subsection{Full-waveform inversion}\label{sec:FWI}
Full-waveform inversion (FWI) is a nonlinear inverse technique that
utilizes the entire wavefield information to estimate subsurface
properties. It is a data-driven method to obtain high-resolution
images by minimizing the difference or misfit between observed and
synthetic seismic waveforms~\cite{virieux2009overview}. The goal of
full-waveform inversion is to estimate the true velocity field through
the solution of the optimization problem in~\eqref{eq:fwi}.  In this
paper, we consider the inverse problem of finding the velocity
parameter of a 2D acoustic wave equation~\eqref{eq:FWD} on the domain
$\Omega=\{(x,z)\in \R^2: z\geq 0\}$ from knowing the wave solution on
the part of the boundary $\partial \Omega=\{(x,z)\in \R^2: z=0\}$.

In Figure~\ref{fig:FWI_flow}, we highlight the major components of the
FWI workflow in each iteration. Starting with an initial velocity
parameter, one forward solve gives the synthetic data $f = Ru$. Here,
$R$ is an extraction operator that outputs the wavefield
$u(\mathbf{x},t)$ on the boundary $\partial \Omega$. $u(\mathbf{x},t)$
is the solution to the following wave equation on spatial domain
$\Omega$ from time $0$ to $T_{\text{max}}$:
\begin{equation}\label{eq:FWD}
  \begin{cases}
       m(\mathbf{x})\frac{\partial^2 u(\mathbf{x},t)}{\partial t^2}- \Laplace u(\mathbf{x},t) = s(\mathbf{x},t),\\
       u(\mathbf{x}, 0 ) = 0, \ \frac{\partial u}{\partial t}(\mathbf{x}, 0 ) = 0& \text{on}\ \Omega,               \\
        \nabla u \cdot \mathbf{n} = 0& \text{on}\ \partial\Omega.    \\
    \end{cases}
  \end{equation}
The model parameter is $m(\mathbf{x}) = \frac{1}{v(\mathbf{x})^2}$
where $v(\mathbf{x})$ is the wave velocity and $s(\mathbf{x},t)$ is
the known source term. It is a linear PDE but a nonlinear map from the
$m(\mathbf{x})$ to $u(\mathbf{x},t)$.
Wave simulation is the most resource-intensive part of the workflow
due to the size of the discretized problem in geophysical
applications. For realistic applications with hundreds of different
wave sources, wave simulations are also done as an embarrassingly
parallel task on supercomputers.  The difference between the synthetic
data $f(m)$ and the real data $g$ is calculated by an objective
function $J(m)$. The conventional choice is the least-squares norm
($L^2$), as we discussed that suffers from cycle skipping and
sensitivity to noise~\cite{Virieux2017}:
\begin{equation*}\label{eq:L2_misfit}
  J_{L^2}(m)=\frac{1}{2} \sum_{i_s} \int\limits_{\partial \Omega} \!\!\!\int\limits_0^{T_{\text{max}}} \abs{f_{i_s}(\mathbf{x},t;m)-g_{i_s}(\mathbf{x},t)}^2dt \,d\mathbf{x}.
\end{equation*}
In practice, the objective function is a summation over multiple
sources where $i_s$ is the index representing the wavefield or data
generated by a different source term $s(\mathbf x,t)$
in~\eqref{eq:FWD}. The summation over the source helps broaden the
bandwidth of the observed data, capture waves from various
illumination angles, and reduce the effects of noise.

The adjoint-state method is used here to compute the FWI gradient of
the objective function; see detailed derivations
in~\cite{Demanet2016,chavent2010nonlinear,Plessix}. For large-scale
PDE-constrained optimizations, the adjoint-state method is a common
practice to efficiently compute the gradient of a function or an
operator numerically~\cite{cao2003adjoint}. It has broad applications
in general inverse problems beyond seismic
imaging~\cite{chavent1975history,vogel2002computational,mcgillivray1990methods}.

Based on the adjoint-state method~\cite{Plessix}, one only needs to
solve two wave equations numerically to compute the Fr\'{e}chet
derivative or gradient with respect to model parameters in FWI. The
first one is the forward propagation~\eqref{eq:FWD}. The second one is
the following adjoint equation on the domain $\Omega$ from time
$T_{\text{max}}$ to $0$:
\begin{equation} \label{eq:FWI_adj} \begin{cases}
       m\frac{\partial^2 w(\mathbf{x},t)}{\partial t^2}- \Laplace w(\mathbf{x},t)  = -R^*\frac{\delta J}{\delta f},\\
       w(\mathbf{x}, T_{\text{max}} ) = 0, \ \frac{\partial w}{\partial t}(\mathbf{x}, T_{\text{max}} ) = 0
       & \text{on}\ \Omega,               \\
        \nabla w \cdot \mathbf{n} = 0& \text{on}\ \partial\Omega.   
    \end{cases}
  \end{equation}
The adjoint equation above requires zero final condition at time
$T_{\text{max}}$. Thus,~\eqref{eq:FWI_adj} is often referred to as
backpropagation in geophysics. Solving the adjoint equation is also
practically done in parallel. Once we have the forward wavefield $u$
and adjoint wavefield $w$, the gradient is calculated as follows:
\begin{equation}~\label{eq:adj_grad3} \frac{\delta J}{\delta m} =- \sum_{i_s}
\int_0^{T_{\text{max}}} \frac{\partial^2
  u_{i_s}(\mathbf{x},t)}{\partial t^2} w_{i_s}(\mathbf{x},t)dt.  \end{equation}
We remark that a modification of the misfit function only impacts the
source term of the adjoint wave
equation~\eqref{eq:FWI_adj}~\cite{W1_2D}.

Using the gradient formula~\eqref{eq:adj_grad3}, the velocity
parameter is updated by an optimization method as the last step in
Figure~\ref{fig:FWI_flow} before entering the next iteration. In this
paper, we use L-BFGS with the backtracking line search following the
Wolfe conditions~\cite{liu1989limited}. The step size is required to
both give a sufficient decrease in the objective function and satisfy
the curvature condition~\cite{wolfe1969convergence}.

In~\cite{yang2017application}, we proposed a trace-by-trace objective
function based on $W_2$. A trace is the time history measured at one
receiver. The entire dataset consists of the time history of all the
receivers. For example, with $\mathbf{x}$ fixed, $f(\mathbf{x},t)$ is
a trace. The corresponding misfit function is \begin{equation} \label{eqn:Wp1D}
J_{W_2}(m) = \frac{1}{2} \sum_{i_s} \int\limits_{\partial \Omega}
W_2^2(f_{i_s}(\mathbf{x},t;m),g_{i_s}(\mathbf{x},t)) d\mathbf{x}, \end{equation}
Mathematically it is $W_2$ metric in the time domain and $L^2$ norm in
the spatial domain.  In Section~\ref{sec:OT}, we will define the $W_2$
metric formally.

\subsection{Optimal transport}\label{sec:OT}
The optimal mass transport problem seeks the most efficient way of
transforming one distribution of mass to the other, relative to a
given cost function. It was first brought up by Monge in
1781~\cite{Monge} and later expanded by
Kantorovich~\cite{kantorovich1960mathematical}.  Optimal
transport-related techniques are nonlinear as they explore both the
signal amplitude and the phases. The significant contributions of the
mathematical analysis of the optimal transport problem since the 1990s
~\cite{Villani} together with current advancements in numerical
methods~\cite{peyre2019computational} have driven the recent
development of numerous applications based on optimal
transport~\cite{kolouri2016transport}.

Given two probability densities $f = d\mu$ and $g=d\nu$, we are
interested in the measure-preserving map $T$ such that
$f = g \circ T$.
\begin{definition}[Measure-preserving map]\label{def:mass_preserve}
  A transport map $T: X \rightarrow Y$ is measure-preserving if for
  any measurable set $B \in Y$,
  \[~\label{eq:mass_preserve1} \mu (T^{-1}(B)) = \nu(B).
  \]
  If this condition is satisfied, $\nu$ is said to be the push-forward
  of $\mu$ by $T$, and we write $\nu = T_\# \mu $.
\end{definition}
If the optimal transport map $T(x)$ is sufficiently smooth and
$\det(\nabla T(x) ) \neq 0$, Definition~\ref{def:mass_preserve}
naturally leads to the requirement
\begin{equation}\label{eq:mass_preserve2} f(x) =
  g(T(x))\det(\nabla T(x)).  \end{equation}
The transport cost function $c(x,y)$
maps pairs $(x,y) \in X\times Y$ to $\mathbb{R}\cup \{+\infty\}$,
which denotes the cost of transporting one unit mass from location $x$
to $y$. The most common choices of $c(x,y)$ include $|x-y|$ and
$|x-y|^2$, which denote the Euclidean norms for vectors $x$ and $y$
hereafter. While there are many maps $T$ that can perform the
relocation, we are interested in finding the optimal map that
minimizes the total cost. If $c(x,y) = |x-y|^p$ for $p \geq 1$, the
optimal transport cost becomes the class of the Wasserstein metric:
\begin{definition}[The Wasserstein metric]
  We denote by $\mathscr{P}_p(X)$ the set of probability measures with
  finite moments of order $p$. For all $p \in [1, \infty)$,
  \begin{equation}\label{eq:static}
    W_p(\mu,\nu)=\left( \inf _{T_{\mu,\nu}\in \mathcal{M}}\int_{\mathbb{R}^n}\left|x-T_{\mu,\nu}(x)\right|^p \, d\mu(x)\right) ^{\frac{1}{p}},\quad \mu, \nu \in \mathscr{P}_p(X).
  \end{equation}
  $T_{\mu,\nu}$ is the measure-preserving map between $\mu$ and $\nu$,
  or equivalently, $(T_{\mu,\nu} )_\# \mu =\nu$. $\mathcal{M}$ is the
  set of all such maps that rearrange the distribution $\mu$ into $\nu$.
\end{definition}

Equation~\eqref{eq:static} is based on Monge's problem for which
the optimal map does not always exist since ``mass splitting'' is not
allowed. For example, consider $\mu = \delta_{1}$ and
$\nu = \frac{1}{2}\delta_{0}+\frac{1}{2}\delta_{2}$, where $\delta_x$
is the Dirac measure. The only rearrangement from $\mu$ to $\nu$ is
not technically a map (function). Kantorovich relaxed the
constraints~\cite{kantorovich1960mathematical} and proposed the
following alternative formulation~\eqref{eq:static2}. Instead of
searching for a function $T$, the transference plan $\pi$ is
considered. The plan is a measure supported on the product space
$X\times Y$ where $\pi(x,y_1)$ and $\pi(x,y_2)$ are well-defined for
$y_1\neq y_2$. The optimal transport problem under the Kantorovich
formulation becomes a linear problem in terms of the plan $\pi$:
\begin{equation}~\label{eq:static2} \inf_{\pi} I[\pi] = \bigg\{ \int_{X \times Y}
c(x,y) d\pi\ |\ \pi \geq 0\ \text{and}\ \pi \in \Gamma(\mu, \nu)
\bigg\} , \end{equation}
where
$\Gamma (\mu, \nu) =\{ \pi \in \mathcal{P}(X\times Y)\ |\ (P_X)_\# \pi
= \mu, (P_Y)_\# \pi = \nu \}$. Here $(P_X)$ and $(P_Y)$ denote the two
projections, and $(P_X)_\# \pi$ and $(P_Y)_\# \pi $ are two measures
obtained by pushing forward $\pi$ with these two projections.

One special property of optimal transport is the so-called c-cyclical
monotonicity. It offers a powerful tool to characterize the general
geometrical structure of the optimal transport plan from the
Kantorovich formulation. It has been proved that optimal plans for any
cost function have c-cyclically monotone
support~\cite{villani2008optimal}. In addition, the concept can be
used to formulate a \textit{sufficient} condition of the optimal
transport plan~\cite{villani2008optimal,brenier1991polar,KnottSmith}
under certain mild assumptions~\cite{ambrosio2013user}. Later, we will
use this property to prove Theorem~\ref{thm: biconvex}, one of the key
results in the paper.\medskip

\begin{definition}[Cyclical monotonicity]~\label{def: c-cyc} We say
  that a set $\xi \subseteq X\times Y$ is \linebreak[3]\textit{c-cyclic\-ally monotone} if for
  any $m\in\mathbb{N}^+$, $(x_i,y_i) \in \xi$, $1\leq i \leq m$,
  implies
  \[ \label{eq:cyclical} \sum_{i=1}^{m} c(x_i, y_i) \leq
    \sum_{i=1}^{m} c(x_i, y_{i-1}),\ (x_0,y_0) \equiv (x_m,y_m).
  \]
\end{definition}

We focus on the quadratic cost ($p=2$) and the quadratic Wasserstein
distance ($W_2$). We also assume that $\mu$ does not give mass to
small sets~\cite{brenier1991polar}, which is a necessary condition to
guarantee the existence and uniqueness of the optimal map under the
quadratic cost for Monge's problem. This requirement is natural for
seismic signals.  Brenier~\cite{brenier1991polar} also proved that the
optimal map coincides with the optimal plan in the sense that
$\pi =(\textnormal{Id} \times T)_\# \mu$ if Monge's problem has a solution. Thus,
one can extend the notion of cyclical monotonicity to optimal maps.

The $W_2$ distance is related to the Sobolev norm
$\dot H^{-1}$~\cite{peyre2018comparison} and has been proved to be
insensitive to mean-zero noise~\cite{engquist2016optimal,ERY2019}.  If
both $f=d\mu$ and $g=d\nu$ are bounded from below and above by
constants $c_1$ and $c_2$, the following nonasymptotic equivalence
holds:
\begin{equation}\label{eq:smoothing1}
  \frac{1}{c_2} \|f-g\|_{\dot H^{-1}} \leq W_2(\mu, \nu) \leq \frac{1}{c_1} \|f-g\|_{\dot H^{-1}},
\end{equation} 
where
$\|f\|_{\dot H^{-1}} = \big\| |\xi|^{-1} \hat{f}(\xi) \big\|_{L^2}$,
$\what f$ is the Fourier transform of $f$, and $\xi$ represents the
frequency.  If $d\zeta$ is an infinitesimal perturbation that has zero
total mass~\cite{Villani},
\begin{equation}~\label{eq:smoothing2} W_2(\mu,
  \mu+d\zeta)=\|d\zeta\|_{\dot H_{(d\mu)}^{-1}}+o(d\zeta),
\end{equation}
which shows the asymptotic connections. Here, $\dot H_{(d\mu)}^{-1}$
is the $\dot H^{-1}$ norm weighted by measure $\mu$.

The $1/|\xi|$ weighting suppresses higher frequencies, as seen from
the definition of the $\dot H^{-1}$ norm. It is also referred to as
the smoothing property of the negative Sobolev norms. The asymptotic
and nonasymptotic connections in~\eqref{eq:smoothing1}
and~\eqref{eq:smoothing2} partially explain the smoothing properties
of the $W_2$ metric, which applies to any data
dimension~\cite{ERY2019}. It is also a natural result of the optimal
transport problem formulation.  On the other hand, the $L^2$ norm is
known to be sensitive to noise~\cite{virieux2009overview}. The noise
insensitivity of the Wasserstein metric has been used in various
applications~\cite{lellmann2014imaging,Puthawala2018}.
\begin{theorem}[$W_2$ insensitivity to
  noise~\cite{engquist2016optimal}]\label{thm:noise}
  Consider probability density function $f = g + \delta$, where
  $\delta$ is mean-zero noise \tu(random variable with zero mean\tu) with
  variance $\eta$, piecewise constant on $N$ intervals (numerical
  discretization). Then
  \[ \|f-g\|_{2}^2 = \bO(\eta), \quad W_2^2(f,g) =
    \bO\left(\frac{\eta}{N}\right).
  \]
\end{theorem}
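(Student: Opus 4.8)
\medskip
\noindent\textbf{Proof proposal.} The plan is to handle the two estimates separately, the first being only a baseline and the second the substantive one. For the $L^2$ bound I would write the domain as $[0,L]$ split into $N$ subintervals $I_1,\dots,I_N$ of length $h=L/N$, so that $\delta=\sum_{k=1}^N\delta_k\,\one_{I_k}$ with $\delta_1,\dots,\delta_N$ uncorrelated, $\E\delta_k=0$, $\E\delta_k^2=\eta$. Then $\|f-g\|_2^2=\|\delta\|_2^2=h\sum_k\delta_k^2$, hence $\E\|f-g\|_2^2=hN\eta=L\eta=\bO(\eta)$, with no improvement as $N\to\infty$; this is the contrast against which the $W_2$ rate should be read.

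For the $W_2$ bound the plan is to pass through the negative Sobolev norm. Since $g$ is a normalized seismic trace it is bounded above and away from $0$, and for noise amplitude small compared with that lower bound $f=g+\delta$ is likewise trapped between positive constants $c_1\le f,g\le c_2$; then~\eqref{eq:smoothing1} gives $W_2^2(f,g)\le c_1^{-2}\|\delta\|_{\dot H^{-1}}^2$. In one dimension I would next replace the $\dot H^{-1}$ norm of $\delta$ by the $L^2$ norm of its antiderivative $D(x)=\int_0^x\delta(t)\,dt$: since $\hat\delta(\xi)=i\xi\,\hat D(\xi)$, Plancherel yields $\|\delta\|_{\dot H^{-1}}^2=\|D\|_{L^2}^2$. (The zero-mean hypothesis is exactly what makes $D$ return to $0$ at the right endpoint, so this step is legitimate; if one only knows $\E\delta_k=0$, the spatial average of $\delta$ is itself of size $\bO(\sqrt{\eta/N})$ and, after renormalizing $f$ to unit mass, contributes only a lower-order term.) It then remains to estimate $\E\|D\|_{L^2}^2$.

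The computation I would run is as follows. On $I_k=[(k-1)h,kh]$, $D((k-1)h+t)=hS_{k-1}+\delta_k t$ for $t\in[0,h]$, where $S_{k-1}=\delta_1+\cdots+\delta_{k-1}$ ($S_0=0$) is the random walk of partial sums; an elementary integration gives $\int_{I_k}D^2=h^3S_{k-1}^2+h^3S_{k-1}\delta_k+\tfrac13h^3\delta_k^2$. Using $\E S_{k-1}^2=(k-1)\eta$ and $\E[S_{k-1}\delta_k]=0$ and summing over $k$,
\[
\E\|D\|_{L^2}^2=h^3\eta\sum_{k=1}^N\!\Big((k-1)+\tfrac13\Big)=h^3\eta\Big(\tfrac{N(N-1)}{2}+\tfrac N3\Big)=\tfrac12 L^3\,\frac{\eta}{N}\,(1+o(1)),
\]
since $h^3=L^3/N^3$. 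Combining, $\E\,W_2^2(f,g)\le c_1^{-2}\E\|D\|_{L^2}^2=\bO(\eta/N)$, and the lower half of~\eqref{eq:smoothing1} would in fact show this rate is sharp.

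The hard part will be this last step rather than the bookkeeping around it: the values of $D$ on different subintervals are \emph{not} independent --- they follow a random walk --- so one cannot argue interval-by-interval the way $\|\delta\|_2^2$ was handled, and the extra factor $1/N$ relative to the $L^2$ case is entirely due to $\sum_k\E S_{k-1}^2$ growing like $N^2$ rather than $N^3$ once multiplied by $h^3=(L/N)^3$. A secondary technicality is keeping $f$ a genuine probability density (nonnegativity and unit mass), which constrains the noise level in terms of $c_1$ and, depending on how ``mean-zero'' is interpreted, costs a harmless $\bO(\eta/N)$ renormalization correction. Finally, the in-expectation statements can be upgraded to high-probability ones by standard concentration for the independent, finite-variance increments $\delta_k$.
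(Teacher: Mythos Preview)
The paper does not actually prove Theorem~\ref{thm:noise}; it is quoted from the earlier work~\cite{engquist2016optimal} and followed only by a remark, so there is no in-paper proof to compare against.  Your argument is correct and self-contained.

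A brief comparison with the proof in the cited reference: there the estimate is obtained directly from the one-dimensional formula $W_2^2(f,g)=\int_0^1|F^{-1}-G^{-1}|^2$ rather than by passing through the nonasymptotic $\dot H^{-1}$ equivalence~\eqref{eq:smoothing1}.  The two routes are the same computation in disguise, since in one dimension $F-G$ is exactly your antiderivative $D$, and both arguments ultimately reduce to bounding $\E\|D\|_{L^2}^2$ via the random-walk variance $\E S_{k-1}^2=(k-1)\eta$.  Your presentation has the advantage of making the mechanism---one integration buys one factor of $1/N$---explicit, and of explaining via the lower inequality in~\eqref{eq:smoothing1} why the rate is sharp.

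One point to flag: invoking~\eqref{eq:smoothing1} requires $g$ (and hence $f$) to be bounded away from zero by some $c_1>0$, which is not part of the theorem as stated.  This is harmless in the intended setting---normalized traces after the affine scaling of Section~\ref{sec:Data_Normalization} are exactly of this form---but you should list it as a standing hypothesis rather than slipping it in mid-proof.  The direct CDF approach in~\cite{engquist2016optimal} needs an analogous assumption to pass from $|F-G|$ to $|F^{-1}-G^{-1}|$, so this is not a defect of your route specifically.
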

\begin{remark}
  Theorem~\ref{thm:noise} holds for general (signed) signals of zero
  mean if the data is normalized by the linear
  scaling~\eqref{eq:linear}.
\end{remark}

\section{Full-Waveform Inversion with the Wasserstein
  Metric} \label{sec:cvx} A significant source of difficulty in
seismic inversion is the high degree of nonlinearity and
nonconvexity. FWI is typically performed with the $L^2$ norm as the
objective function using local optimization algorithms in which the
subsurface model is described by using a large number of unknowns, and
the number of model parameters is determined a
priori~\cite{tarantola2005inverse}. It is relatively inexpensive to
update the model through local optimization algorithms, but the
convergence highly depends on the choice of a starting
model. Mathematically it is related to the highly nonconvex nature of
the PDE-constrained optimization problem and results in finding only
local minima.

The current challenges of FWI motivate us to modify the objective
function in the general framework in Figure~\ref{fig:FWI_flow} by
replacing the traditional $L^2$~norm with a new metric of better
convexity and stability for seismic inverse problems.  Engquist and
Froese~\cite{EFWass} first proposed to use the Wasserstein distance as
an alternative misfit function measuring the difference between
synthetic data $f$ and observed data $g$. This new objective function
has several properties advantageous for seismic inversion. In
particular, the convexity of the objective function with respect to
the data translation is a crucial property. Large-scale perturbations
of the velocity parameter mainly change the phases of the time-domain
signals~\cite{jannane1989wavelengths,virieux2009overview,engquist2016optimal}. The
convexity regarding the data shift is the key to avoid the so-called
cycle-skipping issues, which is one of the main challenges of
FWI. Results regarding the convexity are given in Theorem~\ref{thm:
  biconvex} below.

Seismic signals are in both the time and the spatial domain. One can
solve a 2D or 3D optimal transport problem to compute the Wasserstein
distance~\cite{W1_2D,W1_3D} or use the trace-by-trace
approach~\eqref{eqn:Wp1D}, which utilizes the explicit solution to the
1D optimal transport problem~\cite{Villani}. It is fast and accurate
to compute the Wasserstein distance between 1D signals, so the
trace-by-trace approach is cost effective for
implementation. Nevertheless, benefits have been observed regarding
the lateral coherency of the data by solving a 2D or 3D optimal
transport problem to compute the $W_2$
metric~\cite{poncet2018fwi,messud2019multidimensional}. Both
approaches have been appreciated by the
industry~\cite{wang2019adaptive,Ramos2018}. One can refer
to~\cite{yang2017application,W1_3D} for more discussions.

The translation and dilation in the wavefields are direct effects of
variations in the velocity $v$, as can be seen from D'Alembert's
formula that solves the 1D wave
equation~\cite{engquist2016optimal}. In particular, we will
reformulate the theorems in~\cite{engquist2016optimal} as a joint
convexity of $W_2$ with respect to both signal translation and
dilation and prove it in a more general setting. In practice, the
perturbation of model parameters will cause both signal translation
and dilation simultaneously, and the convexity with respect to both
changes is an ideal property for gradient-based optimization.

Since seismic signals are partial measurements of the boundary value
in~\eqref{eq:FWD}, they are compactly supported in $\mathbb{R}^d$ and
bounded from above. It is therefore natural to assume

\begin{equation} \label{eq:cvx_assum} \int_{\mathbb{R}^d}\!\!\int_{\mathbb{R}^d}
  |x-y|^2 f(x)g(y)dx \, dy < +\infty.  \end{equation}
Normalization is a very important
step in seismic inversion that turns oscillatory seismic signals into
probability measures. It is one prerequisite of optimal
transport~\cite{qiu2017full,metivier2019graph,Survey2,
  yang2017application}. In this section, we regard the normalized
synthetic data and observed data as probability densities $f = d\mu$
and $g =d\nu$ compactly supported on convex domains
$X, Y\subseteq \R^d$, respectively.

Next, we will improve our result in~\cite{engquist2016optimal} with a
stronger convexity proof in the following theorem, Theorem~\ref{thm: biconvex},
which states a joint convexity in multiple variables with respect to
both translation and dilation changes in the data.
Assume that $s_k\in \mathbb{R}$, $k = 1,\dots,d$, is a set of
translation parameters and $\{e_k\}_{k=1}^d$ is the standard basis of
the Euclidean space $\mathbb{R}^d$.
$A=\diag (1/\lambda_1,\dots,1/\lambda_d)$ is a dilation matrix where
$\lambda_k\in \mathbb{R}^+$, $k= 1,\dots,d$.  We define $f_{\Theta}$ as
jointly the translation and dilation transformation of function $g$
such that 
\begin{equation} \label{eq:f_theta} f_{\Theta}(x)
=\det(A)g\Biggl(A\Biggl(x-\sum_{k=1}^d s_k e_k\Biggr)\Biggr).  \end{equation}We will prove the convexity
in terms of the multivariable
$$\Theta = \{s_1, \dots, s_d, \lambda_1, \dots, \lambda_d\}\in \R^{2d}.$$  

\begin{theorem}[Convexity of $W_2$ in translation and dilation]
  \label{thm: biconvex}
  Let $g = d\nu$ be a probability density function with finite second
  moment and $f_{\Theta}$ be defined by~\eqref{eq:f_theta}. If, in
  addition, $g$ is compactly supported on convex domain
  $Y\subseteq \R^d$, the optimal transport map between $f_{\Theta}(x)$
  and $g(y)$ is $y = T_{\Theta}(x)$ where
  $\langle T_{\Theta}(x), e_k \rangle = \frac{1}{\lambda_k} (\langle x
  , e_k \rangle - s_k)$, $k= 1,\dots,d$. Moreover,
  $I(\Theta) = W_2^2(f_{\Theta}(x),g)$ is a strictly convex function
  of the multivariable $\Theta$.
\end{theorem}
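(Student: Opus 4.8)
The plan is to proceed in three stages: exhibit the affine map $T_\Theta$ as an admissible transport map, certify its optimality through cyclical monotonicity, and then evaluate $I(\Theta)=W_2^2(f_\Theta,g)$ in closed form and read strict convexity off a $2\times 2$ Hessian. First I would check that $T_\Theta$ is measure-preserving. Writing $s=\sum_{k=1}^d s_k e_k$, the proposed map is $T_\Theta(x)=A(x-s)$, so $\nabla T_\Theta\equiv A$ and $\det(\nabla T_\Theta)=\det A=\prod_k\lambda_k^{-1}>0$. Substituting into~\eqref{eq:mass_preserve2} and using~\eqref{eq:f_theta} gives $g(T_\Theta(x))\det(\nabla T_\Theta(x))=\det(A)\,g(A(x-s))=f_\Theta(x)$, hence $(T_\Theta)_\#(f_\Theta\,dx)=g\,dy$ and $T_\Theta\in\mathcal{M}$. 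Since $f_\Theta$ is an affine image of the finite-second-moment density $g$, it too has finite second moment, so $W_2(f_\Theta,g)<\infty$.

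Next I would show $T_\Theta$ is \emph{the} optimal map. Because every $\lambda_k>0$, the matrix $A$ is symmetric positive definite, so $T_\Theta=\nabla\phi$ for the convex function $\phi(x)=\tfrac12\langle Ax,x\rangle-\langle As,x\rangle$. The graph $\{(x,T_\Theta(x))\}$ therefore lies in the subdifferential of a convex function and is cyclically monotone; for the quadratic cost this coincides with $c$-cyclical monotonicity in the sense of Definition~\ref{def: c-cyc}, since $\sum_i|x_i-y_i|^2\le\sum_i|x_i-y_{i-1}|^2$ rearranges to $\sum_i\langle x_i,\,y_i-y_{i-1}\rangle\ge 0$. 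Invoking the sufficient condition for optimality of a $c$-cyclically monotone plan under the standing hypothesis that $\mu=f_\Theta\,dx$ gives no mass to small sets~\cite{brenier1991polar,KnottSmith,villani2008optimal} then yields that $y=T_\Theta(x)$ is the unique optimal transport map, which is the theorem's first assertion.

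With the optimal map in hand I would change variables $y=T_\Theta(x)$, i.e.\ $x_k=\lambda_k y_k+s_k$ with $f_\Theta(x)\,dx=g(y)\,dy$, so that $x_k-(T_\Theta(x))_k=(\lambda_k-1)y_k+s_k$ and
\[
I(\Theta)=\int_{\R^d}\bigl|x-T_\Theta(x)\bigr|^2 f_\Theta(x)\,dx=\sum_{k=1}^d\int_{\R^d}\bigl[(\lambda_k-1)y_k+s_k\bigr]^2 g(y)\,dy,
\]
which evaluates to $\sum_{k=1}^d\bigl[(\lambda_k-1)^2 m_k+2(\lambda_k-1)s_k\mu_k+s_k^2\bigr]$, where $\mu_k=\int y_k\,g(y)\,dy$ and $m_k=\int y_k^2\,g(y)\,dy$ are finite. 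This is a sum of functions of the disjoint coordinate pairs $(s_k,\lambda_k)$, so it suffices to show each summand is strictly convex in its pair: its Hessian is the constant matrix $2\left(\begin{smallmatrix}1&\mu_k\\\mu_k&m_k\end{smallmatrix}\right)$, with positive trace and determinant $4(m_k-\mu_k^2)=4\,\mathrm{Var}_g(y_k)$. Since $g\,dx$ is absolutely continuous, every hyperplane $\{x_k=a\}$ is $g\,dx$-null, so the $k$-th coordinate marginal of $g$ is non-atomic and thus has strictly positive variance; hence each block Hessian is positive definite and $I$ is strictly convex on $\R^{2d}$.

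The hard part will be the optimality argument in the second stage: one genuinely needs the sufficient condition (Brenier / Knott--Smith, valid because $f_\Theta\,dx$ charges no small set) to promote $c$-cyclical monotonicity of the graph into \emph{global} optimality of $T_\Theta$, rather than merely observing that $T_\Theta$ is some admissible map. Everything else is an explicit affine change of variables and a $2\times 2$ positivity check.
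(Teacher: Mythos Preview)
Your proof is correct and follows essentially the same three-stage strategy as the paper: verify $T_\Theta$ is measure-preserving, certify optimality via cyclical monotonicity (you invoke $T_\Theta=\nabla\phi$ for convex $\phi$ directly, where the paper verifies the monotonicity inequality by hand), then compute $I(\Theta)$ as an explicit quadratic and check positive definiteness of the Hessian. Your observation that $I(\Theta)$ decouples into independent $(s_k,\lambda_k)$-blocks, reducing the Hessian analysis to $2\times2$ determinants $4\,\mathrm{Var}_g(y_k)>0$, is a clean streamlining of the paper's full $2d\times2d$ eigenvalue computation, and your variance argument via non-atomic marginals actually shows the convexity hypothesis on $Y$ is not needed for strict convexity of $I$.
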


\begin{proof}[Proof of Theorem~\ref{thm: biconvex}]
  First, we will justify that $y =T_{\Theta}(x)$ is a
  measure-preserving map according to
  Definition~\ref{def:mass_preserve}. It is sufficient to check that
  $T_{\Theta}$ satisfies Equation~\eqref{eq:mass_preserve2}:
  \[
    f_{\Theta}(x) =\det(A)g(T_{\Theta}(x)) =\det(\nabla T_{\Theta}(x))
    g(T_{\Theta}(x)).
  \]

  Since $f_\Theta$ and $g$ have finite second moment by
  assumption, \eqref{eq:cvx_assum} holds.  Next, we will show that the
  new joint measure
  $\pi_{\Theta}=(\tu{Id} \times T_{\Theta})\# \mu_{\Theta}$ is cyclically
  monotone. This is based on two lemmas from \cite[p.~80]{Villani} and
  the fundamental theorem of optimal transport
  in~\cite[p.~10]{ambrosio2013user} on the equivalence of optimality and
  cyclical monotonicity under the assumption of ~\eqref{eq:cvx_assum}.

  For $c(x,y) = |x-y|^2$, the cyclical monotonicity in
  Definition~\ref{def: c-cyc} is equivalent to
  \[
    \sum_{i=1}^{m}x_i \cdot (T(x_i)- T(x_{i-1})) \geq 0,
  \]
  for any given set of $\{x_i\}_{i=1}^m \subset X$. For
  $T_{\Theta}(x)$, we have
  \begin{align}
    \lteqn \sum_{i=1}^{m}x_i \cdot (T_{\Theta}(x_i)-T_{\Theta}(x_{i-1}))\notag\\ & =  
                                                                    \sum_{i=1}^{m}  \sum_{k=1}^d  \langle x_i,e_k \rangle \cdot  (\langle T_{\Theta}(x_{i}) ,e_k \rangle - \langle T_{\Theta}(x_{i-1}) ,e_k \rangle)   \nonumber \\ 
                                                                  &=   \sum_{i=1}^{m}  \sum_{k=1}^d   \frac{1}{\lambda_k} \langle x_i,e_k \rangle \cdot  (\langle x_i,e_k \rangle - \langle x_{i-1},e_k \rangle ) \nonumber \\
                                                                  &=  \frac{1}{2} \sum_{k=1}^d  \frac{1}{\lambda_k} \sum_{i=1}^{m}   |\langle x_i,e_k \rangle - \langle x_{i-1},e_k \rangle |^2 \geq 0,
  \end{align}
  which indicates that the support of the transport plan
  $\pi_{\Theta}=(\tu{Id} \times T_{\Theta})\# \mu_{\Theta}$ is cyclically
  monotone. By the uniqueness of monotone measure-preserving optimal
  maps between two distributions~\cite{mccann1995existence}, we assert
  that $T_{\Theta}(x)$ is the optimal map between $f_{\Theta}$ and
  $g$. The squared $W_2$ distance between $f_{\Theta}$ and $g$ is
  \begin{align}
    I(\Theta)   =  W_2^2(f_{\Theta},g) & = \int_{X} |x-T_{\Theta} (x)|^2 f_{\Theta}(x) dx \nonumber\\
                                       & = \int_{Y} \sum_{k=1}^d |(\lambda_k-1) \langle y,e_k\rangle + s_k |^2 d\nu \nonumber\\
                                       & =   \sum_{k=1}^d a_k (\lambda_k-1)^2    + 2 \sum_{k=1}^d b_k s_k (\lambda_k-1)  + \sum_{k=1}^d s_k^2,  
  \end{align}
  where $a_k =\int_{Y} |\langle y,e_k\rangle|^2\, d\nu $ and
  $b_k =\int_Y \langle y,e_k\rangle d\nu$.

  $I(\Theta)$ is a quadratic function whose Hessian matrix $H(\Theta)$
  is
  \[
    \resizebox{.97\textwidth}{!}{$\begin{pmatrix}
      I_{s_1s_1} &\dots & I_{s_1s_d} & I_{s_1\lambda_1}&\dots & I_{s_1\lambda_d}  \\
      \vdots   &\ddots & \vdots       &\vdots                    &\ddots  &  \vdots  \\
      I_{s_d s_1} &\dots & I_{s_ds_d} & I_{s_d\lambda_1}&\dots & I_{s_d\lambda_d}  \\
      I_{\lambda_1 s_1} &\dots & I_{\lambda_1 s_d} & I_{\lambda_1 \lambda_1}&\dots & I_{\lambda_1 \lambda_d}  \\
      \vdots   &\ddots & \vdots       &\vdots                    &\ddots  &  \vdots  \\
      I_{\lambda_d s_1} &\dots & I_{\lambda_d s_d} & I_{\lambda_d \lambda_1}&\dots & I_{\lambda_d \lambda_d}  \\
    \end{pmatrix} = 2
    \begin{pmatrix}
      1            &\dots    & 0              & b_1                       &\dots    & 0   \\
      \vdots   &\ddots & \vdots       &\vdots                    &\ddots  &  \vdots  \\
      0            &\dots   &  1                &0                          &\dots    & b_d \\
      b_1          &\dots  & 0             & a_1                         &\dots    & 0   \\
      \vdots   &\ddots & \vdots       &\vdots                    &\ddots  &  \vdots  \\
      0             &\dots  &b_d           & 0                           &\dots & a_d  \\
    \end{pmatrix}.$}
  \]
  $H(\Theta)$ is a symmetric matrix with eigenvalues
  \[
    a_k +1\pm \sqrt{a_k^2 - 2a_k + 4b_k^2 + 1},\q k=1,\dots, d.
  \]
  Since $a_k =\int_{Y} |\langle y,e_k\rangle|^2 \, d\nu \geq 0$ by
  definition, and
  \begin{align}
    \lteqn   \left(a_k +1\right)^2 - \Bigl(\sqrt{a_k^2 - 2a_k + 4b_k^2 + 1} \Bigr)^2 \nonumber\\ 
    &= 4\biggl( \int_{Y} |\langle y,e_k\rangle|^2 d\nu \int_{Y} 1^2 d\nu - \biggl(\int_Y \langle y,e_k\rangle d\nu \biggr)^2\biggr) \nonumber \\
    & \geq 0 \quad \text{by Cauchy-Schwarz inequality},~\label{eq:CSineq}
  \end{align}
  all the eigenvalues of $H(\Theta)$ are nonnegative.  Given any
  $k=1,\dots,d$, the equality in~\eqref{eq:CSineq} holds if and only
  if $l(y) = |\langle y,e_k\rangle|^2$ is a constant function
  $\forall y\in Y$, which contradicts the fact that $Y$ is a convex
  domain. Therefore, the Hessian matrix of $I(\Theta)$ is symmetric
  positive definite, which completes our proof that
  $W_2^2(f_{\Theta},g)$ is a strictly convex function with respect to
  $\Theta = \{s_1, \dots, s_d, \lambda_1, \dots, \lambda_d\}$, the
  combination of translation and dilation variables.
\end{proof}

\begin{figure}
  \centering
  \includegraphics[width=0.7\textwidth]{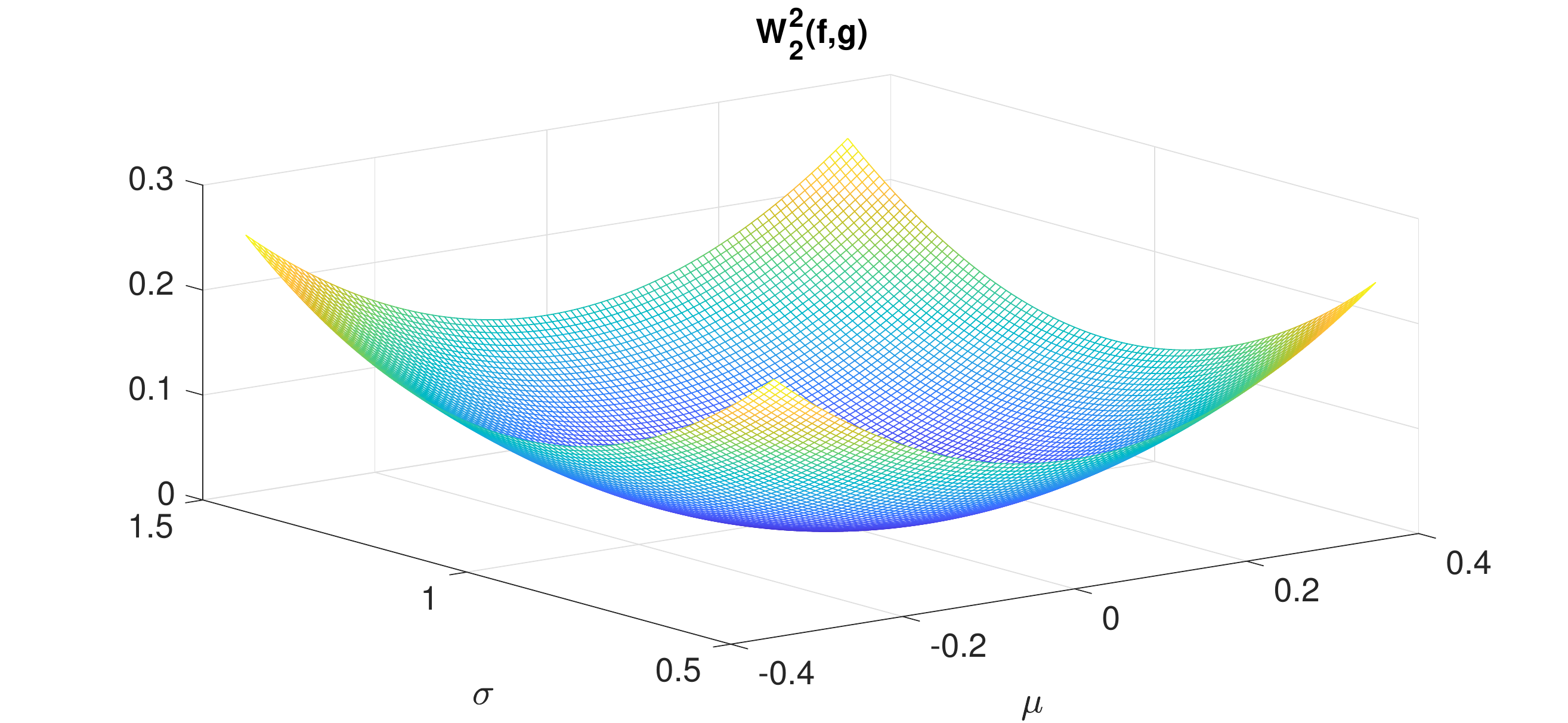}
  \caption{$W^2_2(f_{[\mu,\sigma]},g)$ where $f_{[\mu,\sigma]}$ and
    $g$ are probability density functions of normal distribution
    $\mathcal{N}(\mu,\sigma^2)$ and
    $\mathcal{N}(0,1)$.}~\label{fig:Gaussian_W2}
\vspace{-4ex}
\end{figure}

In Figure~\ref{fig:Gaussian_W2}, we illustrate the joint convexity of
the squared $W_2$ distance with respect to both translation and
dilation by comparing density functions of normal distributions. We
set $f_{[\mu,\sigma]}$ as the density function of the 1D normal
distribution $\mathcal{N}(\mu,\sigma^2)$. The reference function $g$
is the density of $\mathcal{N}(0,1)$. Figure~\ref{fig:Gaussian_W2} is
the optimization landscape of $W^2_2(f_{[\mu,\sigma]},g)$ as a
multivariable function. It is globally convex with respect to both
translation $\mu$ and dilation $\sigma$.

Note that Theorem~\ref{thm: biconvex} applies when $f$ and $g$ are
probability density functions, which is \textit{not} the case for
seismic data. We will discuss more on this topic in
Section~\ref{sec:Data_Normalization}. Let us consider that we can
transform the data into density functions and compute the $W_2$
distance. The convexity of the ``normalized'' $W_2$ with respect to
translation and dilation depends on the choice of normalization
function. We will prove that the softplus scaling~\eqref{eq:softplus}
still keeps the good convexity when the hyperparameter $b$
in~\eqref{eq:softplus} is large.

\section{Data Normalization}\label{sec:Data_Normalization}
We discuss the intrinsic convexity of the Wasserstein distance
in~Section~\ref{sec:cvx}, but the fact that functions should be
restricted to probability distributions is the primary constraint of
applying optimal transport to general signals, in particular,
oscillatory seismic waves.  So far, mathematical extensions of the
optimal transport theory to signed measures are quite
limited~\cite{mainini2012description,ambrosio2011gradient}. Different
strategies have been proposed in the literature to tackle this issue
numerically.

The dual formulation of the 1-Wasserstein distance ($W_1$) coincides
in expression with the so-called Kantorovich-Rubinstein (KR) norm
while the latter is well-defined for all functions. Using the KR norm
as an alternative to $W_1$ is a feasible approach in seismic
inversion~\cite{W1_2D,W1_3D}. Another interesting strategy is to map
the discrete signed signals into a graph space and then compare the
Wasserstein distance between the obtained point
clouds~\cite{scarinci2019robust,thorpe2017transportation,metivier2019graph,metivier2018optimal}. One
can also choose to penalize the matching filter between datasets to be
an identity based on the Wasserstein metric instead of working with
the data itself~\cite{sun2019application,sun2019stereo}. Here we
focus on a different approach: to transform the data into probability
density functions using a nonlinear scaling function before the
comparison.

Since properties of the Wasserstein distance are deeply rooted in the
theory of optimal transport, which studies probability measures, all
the strategies above have advantages as well as limitations. It is
efficient to compute the KR norm for 2D and 3D data, but the norm does
not preserve the convexity regarding data shifts. The graph-based idea
may preserve the convexity if the displacements along with the
amplitude and the phase are well-balanced, but inevitably increases
the dimensionality of the optimal transport problem. Compared to these
strategies, the approach we present here is remarkably efficient as it
does not increase the dimensionality of the data such that it uses the
closed-form solution to the 1D optimal transport problem. The benefits
in terms of computational costs are significant for practical
large-scale applications.

We have discussed normalization before in~\cite{qiu2017full,Survey2},
but not until here are any rigorous results given. In~\cite{EFWass},
the signals were separated into positive and negative parts
$f^+ = \max\{f,0\}$ and $f^- = \max\{-f,0\}$, and scaled by the total
sum. However, the approach cannot be combined with the adjoint-state
method~\cite{Plessix}, while the latter is essential to solve
large-scale problems. This separation scaling introduces
discontinuities in derivatives from $f^+$ or $f^-$, and the
discontinuous Fr\'{e}chet derivative of the objective function with
respect to $f$ cannot be obtained. The squaring $f^2$ or the
absolute-value scaling $\abs{f}$ are not ideal either since they are
not one-to-one maps, and consequently lead to nonuniqueness and
potentially more local minima for the optimization
problem~\eqref{eq:fwi}. One can refer to \cite{Survey2} for more
discussions.

Later, the linear scaling~\eqref{eq:linear}~\cite{yang2017application}
and the exponential-based methods~\cite{qiu2017full}, \eqref{eq:exp},
and~\eqref{eq:softplus} are observed to be effective in practice. The
main issue of data normalization is how to properly transform the
data, as one can see from the literature or practice that some scaling
methods seem to work better than others. This section focuses on
presenting several useful scaling methods and explaining their
corresponding impacts on the $W_2$ misfit function. We aim to offer
better understandings of the role that the normalization function
plays in inversion.  First, we generalize the class of effective
normalization functions that satisfy Assumption~\ref{ass:DN}.
\begin{assumption}
  \label{ass:DN}
  Given a scaling function $\sigma: \R \rightarrow \R^{+}$, we define
  the normalization operator $P_{\sigma}$ on function
  $f: \Pi \rightarrow\R$ as follows: for $ x \in \Pi_1,\ t \in \Pi_2$
  where $\Pi = \Pi_1 \times \Pi_2$,
\begin{equation} \label{eq:DN} \begin{aligned}(P_{\sigma}f)
  (x,t) &= \dfrac{\sigma(y(x,t))+c}{S_\sigma(x)},\\ S_\sigma(x) &=
  \int_{\Pi_2} \left(\sigma(f(x,\tau)) + c \right)d\tau, 
\end{aligned}
\quad c\geq
  0.  \end{equation}The scaling function $\sigma$ satisfies the following
  assumption
  \begin{enumerate}[\quad (i)]
  \item $\sigma$ is one-to-one;
  \item $\sigma: \R \rightarrow \R^{+}$ is a $C^{\infty}$ function.
  \end{enumerate}
\end{assumption}
Functions that satisfy Assumption~\ref{ass:DN} include the linear
scaling $\sigma_l(x;b)$, the exponential scaling $\sigma_e(x;b)$, and
the softplus function $\sigma_s(x;b)$, where $b$ is a
hyperparameter. We use the definition ``hyperparameter'' to
distinguish it from the velocity parameter, which is 
determined in the inversion process:
\[
  \sigma_l(x;b) = x+b, \quad \sigma_e(x;b) = \exp(bx),\quad
  \sigma_s(x;b) = \log\left(\exp(bx)+1\right).
\]
Equivalent definitions are in
equations~\eqref{eq:linear},~\eqref{eq:exp}, and~\eqref{eq:softplus}.

In the rest of~Section~\ref{sec:Data_Normalization} we prove several
properties for the class of normalization operators that satisfy
Assumption~\ref{ass:DN} and discuss their roles in improving
optimal-transport-based FWI. Since the normalization~\eqref{eq:DN} is
performed on a domain $\Pi_2$, and the properties apply for any
$x\in \Pi_1$, we will assume $f$ and $g$ are functions that are
defined on the domain $\Pi_2$ (instead of $\Pi$) for the rest of the
section. As mentioned in Section~\ref{sec:FWI}, we consider the
trace-by-trace approach (1D optimal transport) for seismic
inversion. Hence, $\Pi_2 \subseteq \R$, but all the properties in this
section hold for $\Pi_2\subseteq \R^d$, $d\geq 2$, as well.

\subsection{A Metric for Signed Measures}
Let $\mathcal{P}_s({\Pi_2})$ be the set of finite signed measures that
are compactly supported on domain ${\Pi_2} \subseteq
\mathbb{R}^d$. Consider $f = d\mu$ and $g =d\nu$ where
$\mu, \nu \in \mathcal{P}_s({\Pi_2})$. We denote
$\tilde{f} = P_\sigma(f)$ and $\tilde{g}=P_\sigma(g)$ as normalized
probability densities, where $P_\sigma$ is any scaling operator that
satisfies Assumption~\ref{ass:DN}.  We shall use
$W_2(\tilde{f},\tilde{g})$ as the objective function measuring the
misfit between original seismic signals $f$ and $g$. It can also be
viewed as a new loss function
$W_\sigma(f,g) = W_2(\tilde{f},\tilde{g})$ that defines a metric
between $f$ and $g$.
\begin{theorem}[Metric for signed measures]\label{thm:metric}
  Given $P_\sigma$ that satisfies Assumption~\ref{ass:DN}, $W_\sigma$
  defines a metric on $\mathcal{P}_s({\Pi_2})$.
\end{theorem}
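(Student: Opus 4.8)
The plan is to verify the three metric axioms for $W_\sigma(f,g) = W_2(P_\sigma f, P_\sigma g)$ on $\mathcal{P}_s(\Pi_2)$, leveraging the fact that $W_2$ is already a metric on the space of probability densities compactly supported on $\Pi_2$ with finite second moments, so most of the work is transferring properties back through the normalization map $P_\sigma$. First I would check that $P_\sigma$ is well-defined as a map from $\mathcal{P}_s(\Pi_2)$ into probability densities: since $\sigma: \R \to \R^+$ is $C^\infty$ and strictly positive, $\sigma(f) + c > 0$ pointwise, the denominator $S_\sigma$ is a finite positive number (using compact support of $\Pi_2$ and boundedness/integrability of $f$), and $\int_{\Pi_2} P_\sigma f = 1$ by construction; compact support of $P_\sigma f$ is inherited from that of $f$, and the finite second moment is automatic on a bounded domain. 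Nonnegativity $W_\sigma \geq 0$ and symmetry $W_\sigma(f,g) = W_\sigma(g,f)$ then follow immediately from the corresponding properties of $W_2$. The triangle inequality $W_\sigma(f,h) \leq W_\sigma(f,g) + W_\sigma(g,h)$ likewise follows directly, since it is just the $W_2$ triangle inequality applied to the three normalized densities $P_\sigma f$, $P_\sigma g$, $P_\sigma h$.

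The only axiom requiring real argument is the identity of indiscernibles: $W_\sigma(f,g) = 0 \iff f = g$ (as measures in $\mathcal{P}_s(\Pi_2)$). The direction $f = g \implies W_\sigma(f,g) = 0$ is trivial. For the converse, $W_\sigma(f,g) = 0$ forces $P_\sigma f = P_\sigma g$ as probability densities, i.e.,
\[
  \frac{\sigma(f(t)) + c}{S_\sigma^f} = \frac{\sigma(g(t)) + c}{S_\sigma^g} \quad \text{for a.e. } t \in \Pi_2,
\]
where $S_\sigma^f, S_\sigma^g$ are the (constant) normalizing integrals. Writing $\kappa = S_\sigma^f / S_\sigma^g > 0$, this says $\sigma(f(t)) + c = \kappa(\sigma(g(t)) + c)$ a.e. Integrating both sides over $\Pi_2$ and using the definitions of $S_\sigma^f$ and $S_\sigma^g$ gives $S_\sigma^f = \kappa S_\sigma^g$, which is consistent but does not by itself pin down $\kappa = 1$; so the key point is that $\kappa = 1$ must be forced another way. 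I would argue: if $\kappa \neq 1$, then $\sigma(f(t)) = \kappa \sigma(g(t)) + (\kappa - 1)c$ is an affine (non-identity) relation between $\sigma(f)$ and $\sigma(g)$; combined with injectivity of $\sigma$ one gets $f(t) = \sigma^{-1}(\kappa \sigma(g(t)) + (\kappa-1)c)$ pointwise, and then one needs to derive a contradiction from $\mu, \nu$ both being genuine signed measures on $\Pi_2$ — the natural route is that no single nonlinear $\sigma$ satisfying Assumption~\ref{ass:DN} can be conjugated into a nontrivial affine self-map in a way consistent with the integral constraint unless the two measures coincide. Concretely, since $\int(\sigma(f)+c) = S_\sigma^f$ and $\int(\sigma(g)+c) = S_\sigma^g$ and the affine relation holds pointwise, one has $S_\sigma^f = \kappa S_\sigma^g$, and this is automatically satisfied; the cleaner argument is instead to note that $P_\sigma$ is \emph{injective} on $\mathcal{P}_s(\Pi_2)$: I would show directly that $P_\sigma f = P_\sigma g$ implies $f = g$ by evaluating the relation $\sigma(f(t)) + c = \kappa(\sigma(g(t)) + c)$ and using that both $f$ and $g$ integrate (as signed measures) to determine that $\kappa$ is forced to $1$ — this is where one uses that $\sigma$ is genuinely nonlinear, so the map $\lambda \mapsto \int \sigma^{-1}(\lambda(\sigma(g)+c) - c)\,d\mu_{\mathrm{Leb}}$ is strictly monotone, singling out $\lambda = 1$ as the unique value giving back a measure with the same total mass as $f$.

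The main obstacle, then, is precisely this injectivity-of-$P_\sigma$ step — ruling out that two distinct signed measures get normalized to the same probability density via a hidden rescaling $\kappa \neq 1$. I expect the paper handles it by exploiting injectivity of $\sigma$ (Assumption~\ref{ass:DN}(i)) together with strict monotonicity of $\sigma$ (a consequence of being $C^\infty$, injective, and $\R \to \R^+$, hence strictly monotone), which makes $\sigma^{-1}$ strictly monotone as well; then the constraint that $f$ and $g$ have prescribed (equal, once $W_\sigma = 0$ is unpacked carefully, or in any case fixed) integrals over $\Pi_2$ forces the scaling factor to be $1$, yielding $\sigma(f) = \sigma(g)$ a.e. and hence $f = g$ a.e. by injectivity. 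Once injectivity of $P_\sigma$ is in hand, the identity of indiscernibles is complete, and combined with the three easy axioms this establishes that $W_\sigma$ is a metric on $\mathcal{P}_s(\Pi_2)$.
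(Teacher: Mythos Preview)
Your treatment of nonnegativity, symmetry, and the triangle inequality is fine and matches the paper. The gap is in the identity of indiscernibles, specifically in forcing the normalization ratio $\kappa = S_\sigma^f/S_\sigma^g$ to equal $1$.

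Both arguments you sketch for $\kappa = 1$ fail. First, you invoke that ``$\sigma$ is genuinely nonlinear'' to rule out a nontrivial affine self-conjugacy. But Assumption~\ref{ass:DN} explicitly allows the affine scaling $\sigma_l(x) = x + b$, so you cannot assume nonlinearity. Second, you try to pin down $\kappa$ by matching total masses, arguing that the map $\lambda \mapsto \int \sigma^{-1}(\lambda(\sigma(g)+c)-c)$ is strictly monotone and that the correct $\lambda$ is singled out by ``giving back a measure with the same total mass as $f$.'' But $f$ and $g$ are arbitrary elements of $\mathcal{P}_s(\Pi_2)$; nothing in the hypotheses fixes $\int f$ or relates it to $\int g$, so there is no target value to hit. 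Integrating the pointwise relation $\sigma(f)+c = \kappa(\sigma(g)+c)$ just reproduces $S_\sigma^f = \kappa S_\sigma^g$, which is the definition of $\kappa$ and yields no new constraint.

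The paper's argument is much simpler and uses a piece of structure you did not exploit: compact support. Since $f$ and $g$ are compactly supported on $\Pi_2$, there is a point $x^* \in \Pi_2$ with $f(x^*) = g(x^*) = 0$. At $x^*$ the numerators $\sigma(f(x^*))+c$ and $\sigma(g(x^*))+c$ are both equal to $\sigma(0)+c$, and since $(P_\sigma f)(x^*) = (P_\sigma g)(x^*)$, the denominators must agree as well, i.e., $S_\sigma^f = S_\sigma^g$. Then the pointwise identity $\sigma(f) = \sigma(g)$ follows immediately, and injectivity of $\sigma$ gives $f = g$.
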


\begin{proof}
  Since $W_2$ is a metric on probability measures with finite second
  moment, $W_\sigma$ is symmetric, nonnegative, and finite on
  $\mathcal{P}_s({\Pi_2})$. Also, we have that
  $$W_\sigma(f,f) = W_2(P_\sigma(f),P_\sigma(f)) = 0.$$

  If $W_\sigma(f,g) = 0$, then the following holds:
  \begin{equation*} \label{eq:metric_1} P_\sigma(f) =
    \frac{\sigma(f)+c}{ \int_{\Pi_2} \sigma(f(\tau)) d\tau +
      c|{\Pi_2}|} = \frac{\sigma(g)+c}{ \int_{\Pi_2}
      \sigma(g(\tau))d\tau + c|{\Pi_2}|} = P_\sigma(g).
  \end{equation*}
  Since $f$ and $g$ are both compactly supported on ${\Pi_2}$,
  $\exists x^*\in {\Pi_2}$ such that $f(x^*) = g(x^*) = 0$. Together
  with $\tilde{f}(x^*) = \tilde{g}(x^*)$, we have
  \begin{equation*} \label{eq:metric_2} 
\begin{split}  \int_{\Pi_2}  \sigma(f(\tau))d\tau= \frac{\sigma(f(x^*))+c}{(P_\sigma f)(x^*) }
    - c|{\Pi_2}|&= \frac{\sigma(g(x^*))+c}{(P_\sigma g)(x^*) }
    -c|{\Pi_2}|\\ &= \int_{\Pi_2} \sigma(g(\tau))d\tau.
  \end{split}
\end{equation*}
  Together with~\eqref{eq:metric_1} and the fact that $\sigma$ is
  one-to-one, we have $f=g$ on ${\Pi_2}$.

  All that remains to check is the triangle inequality. Consider
  $h = d\rho$ where $\rho \in \mathcal{P}_s({\Pi_2})$.
  \begin{align*} \label{eq:metric_tri}
    W_\sigma(f,g) + W_\sigma(g,h)  &= W_2(\tilde{f},\tilde{g}) + W_2(\tilde{g},\tilde{h}) 
                                   \leq W_2(\tilde{f},\tilde{h})
                                     =W_\sigma(f,h).
 \qh \end{align*}
\end{proof}

\begin{remark}[Variance and invariance under mass subtraction]
  One can extend the optimal mass transportation problem between
  nonnegative measures whose mass is not normalized to
  unity~\cite{Villani}. Unlike the 1-Wasserstein distance ($W_1$),
  which corresponds to the case of $p=1$ in~\eqref{eq:static},
  $W_2(f,g)$ is \textit{not} invariant under mass subtraction. This
  property can easily be extended to a set of functions
  $\{\bar h\in L^2({\Pi_2}): f+\bar h \geq 0, g+\bar h\geq 0\}$ since
  generally
  \[
    W_2(f+ \bar h,g+\bar h)\neq W_2(f,g),
  \]
  while the $L^2$ norm and the $W_1$ distance remain unchanged:
  \[
    \|(f+\bar h) - (g+\bar h)\|_{L^2} = \|f-g\|_{L^2},\quad 
      W_1(f+\bar h,g+\bar h) =W_1(f,g).
  \]
  $W_2$ has the unique feature of \textit{variance} under mass
  subtraction/addition. Later, we will see that this feature gives us
  the Huber-type property (Theorem~\ref{thm:Huber}) and regularization
  effects (Theorem~\ref{thm:map_smooth}) by adding a positive constant
  $c$ to the signals.
\end{remark}

\subsection{Hyperparameter $b$: Effects on Convexity}\label{sec:b}
We list three specific scaling functions that satisfy
Assumption~\ref{ass:DN}, i.e., \eqref{eq:linear}, \eqref{eq:exp},
and~\eqref{eq:softplus}. In particular, the exponential scaling and
the softplus function are defined by the hyperparameter $b$, which
controls the convexity of the normalized Wasserstein distance. Without
loss of generality, we set $c=0$ in~\eqref{eq:DN}.

\begin{figure}
  \centering \subfloat[Linear
  scaling]{\includegraphics[height=0.23\textwidth]{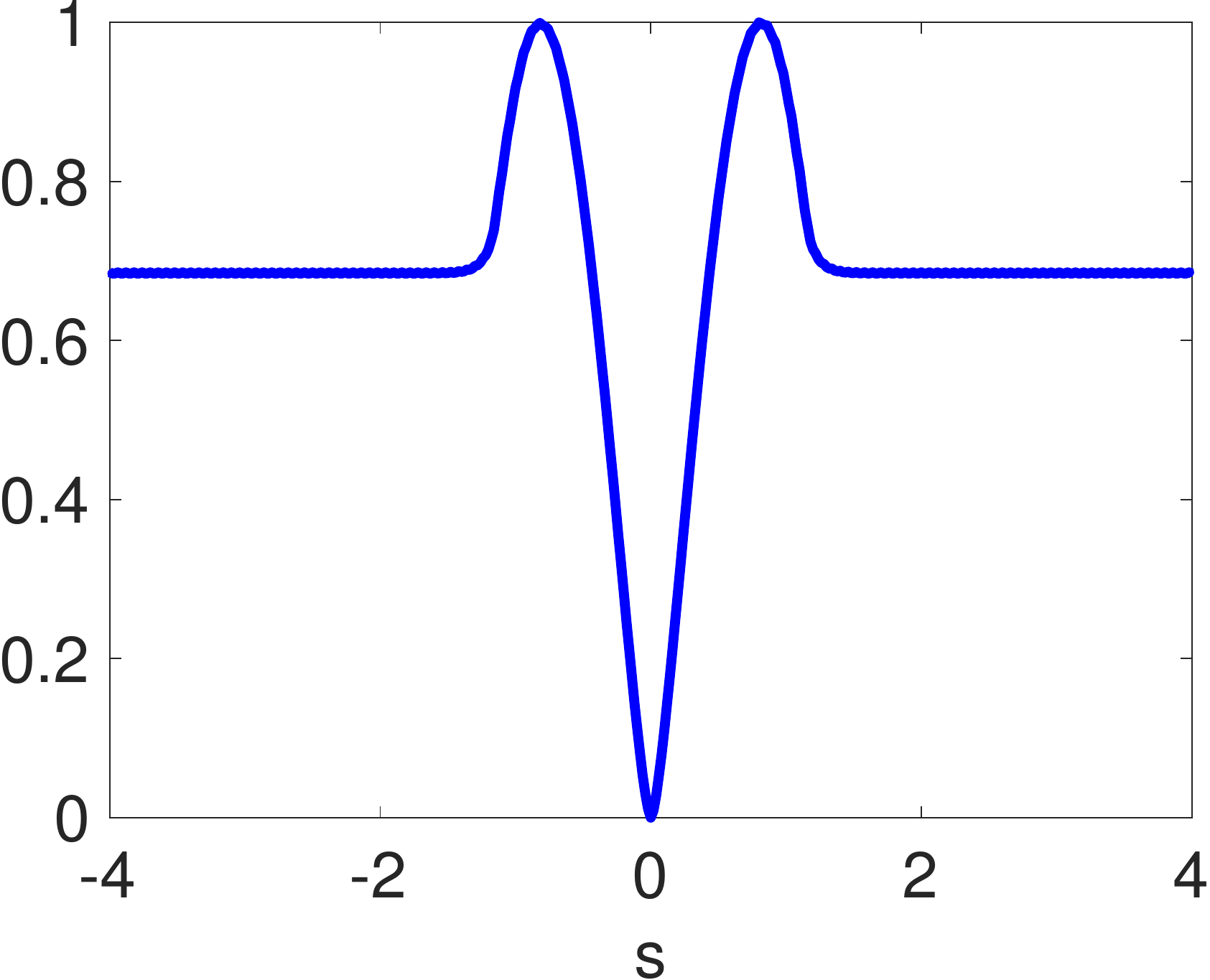}\label{fig:W2-linear}}\quad 
  \subfloat[Exponential (small
  $b$)]{\includegraphics[height=0.23\textwidth]{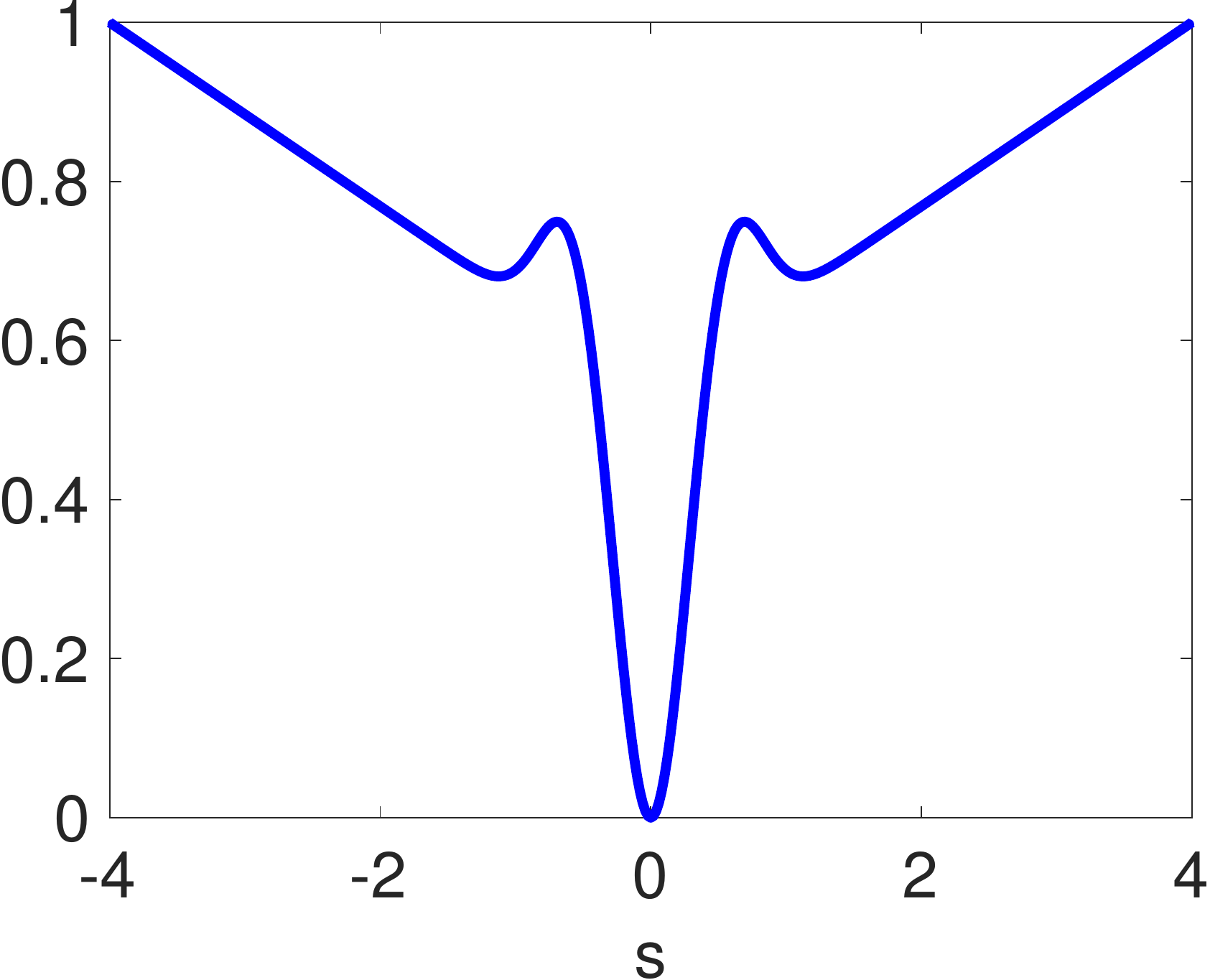}\label{fig:W2-bad-exp}}\quad 
  \subfloat[Exponential (large $b$)]{\includegraphics[height=0.23\textwidth]{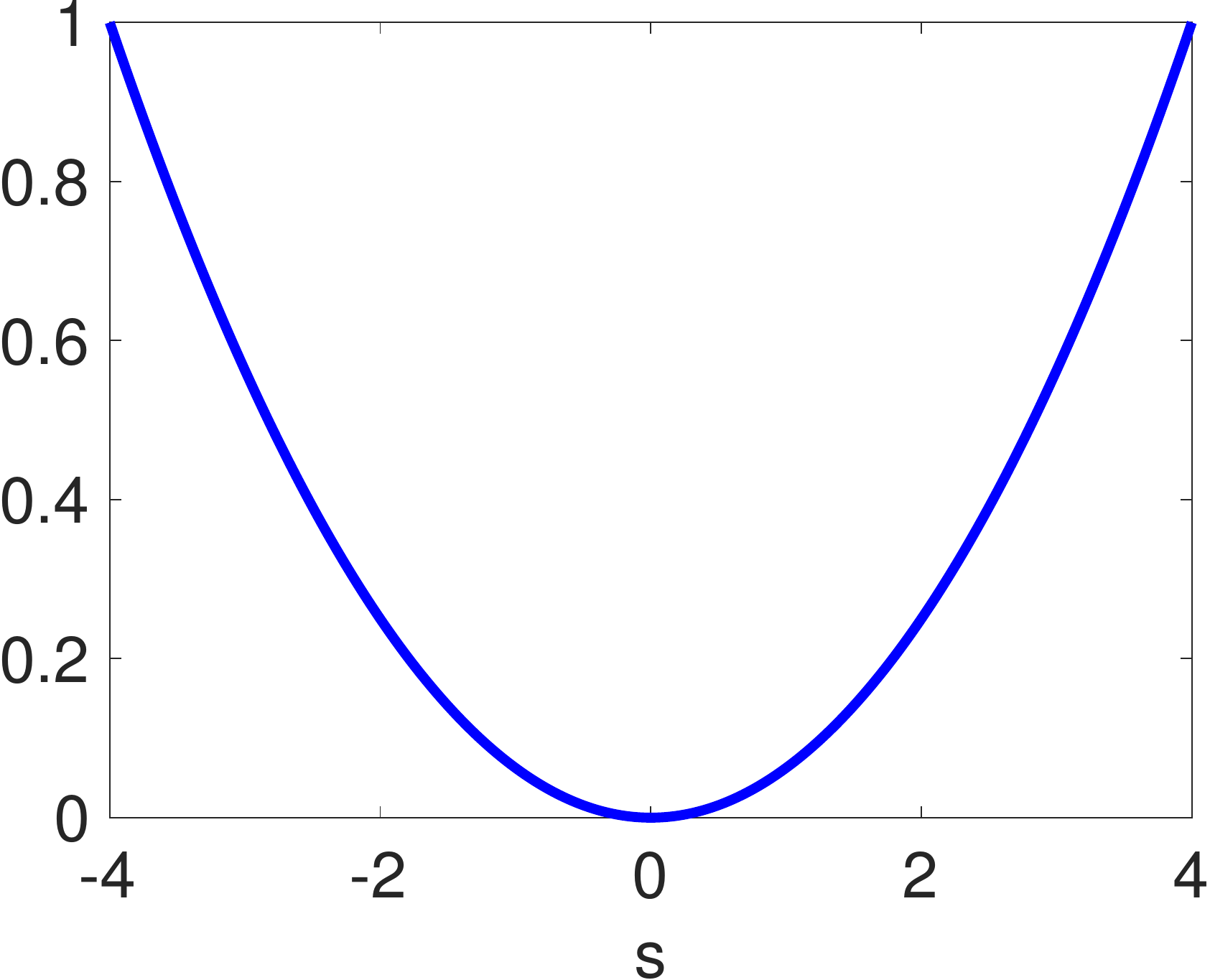}\label{fig:W2-good-exp}}\\
  \subfloat[Softplus (small
  $b$)]{\includegraphics[height=0.23\textwidth]{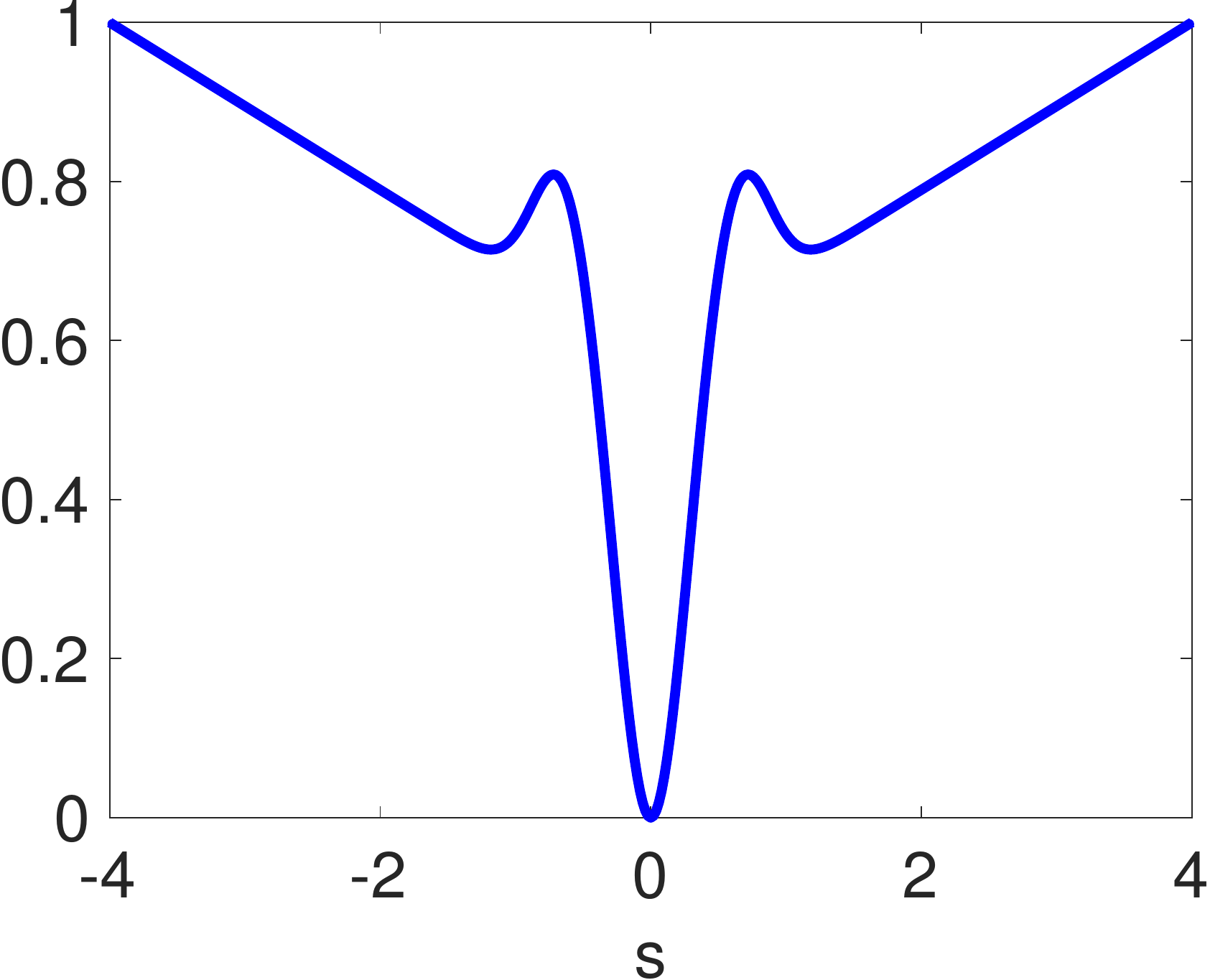}\label{fig:W2-bad-logi}}\quad 
  \subfloat[Softplus (large
  $b$)]{\includegraphics[height=0.23\textwidth]{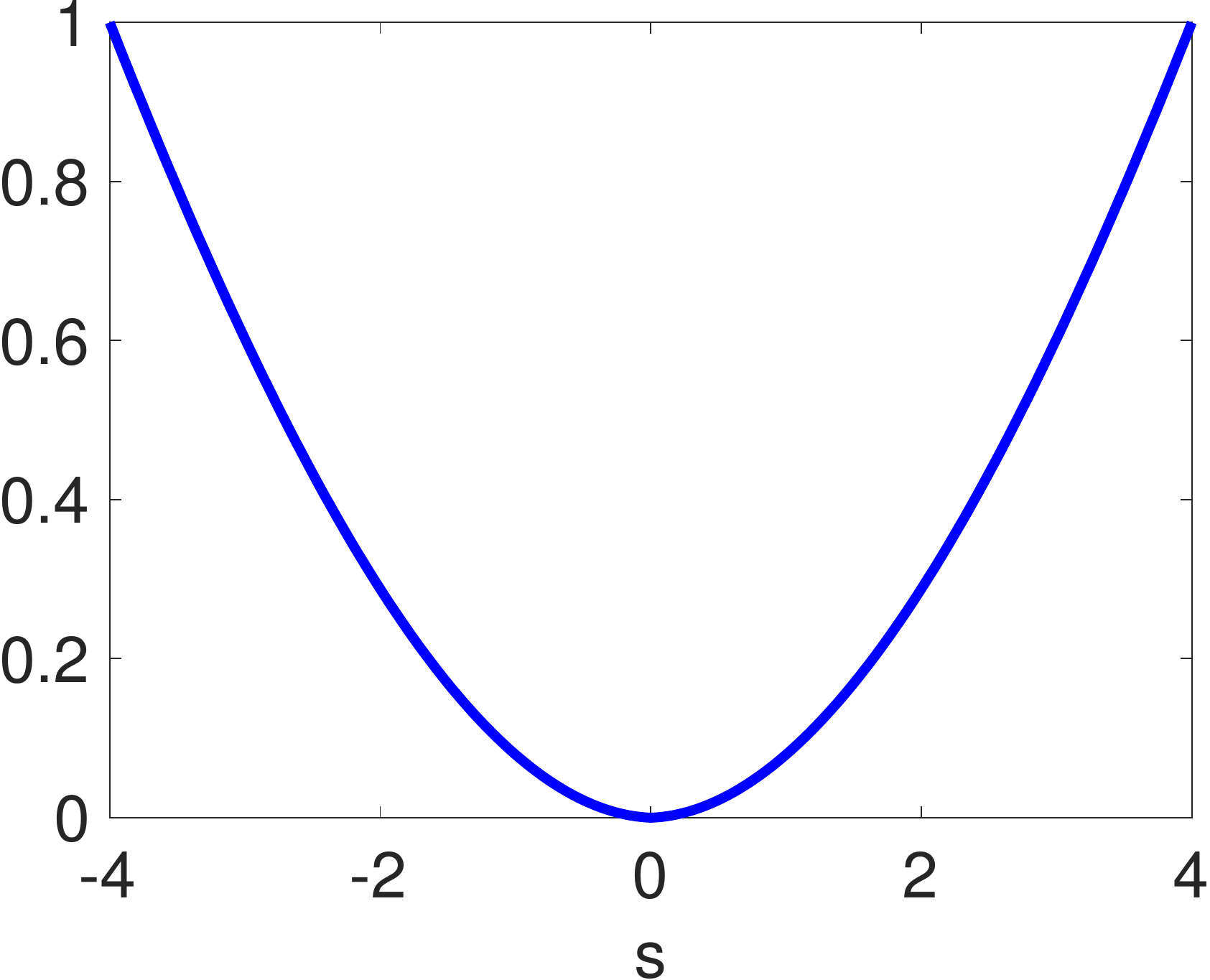}\label{fig:W2-good-logi}}
  \caption{Optimization landscape of the $W_2$ metric between Ricker
    wavelets $f(x)$ and $f(x-s)$, generated by (a) the linear, (b) and (c)
    the exponential with small and large $b$, and (d) and (e) the softplus
    scaling with small and large $b$.}\label{fig:W2-normalization}
\end{figure}

We compare the optimization landscape of the normalized $W_2$ metric
between function $f(x)$ and its shift $f(x-s)$.  The linear scaling
affects the convexity of~$W_2$ with respect to
translation~\cite{Survey2}; see
Figure~\ref{fig:W2-linear}. Figures~\ref{fig:W2-bad-exp}
and~\ref{fig:W2-good-exp} are obtained by the exponential scaling. We
choose scalar $b$ such that $\|bf\|_{\ell_\infty} \approx 0.5$ in
Figure~\ref{fig:W2-bad-exp} and $\|bf\|_{\ell_\infty} \approx 4$ in
Figure~\ref{fig:W2-good-exp}. The objective function plot in
Figure~\ref{fig:W2-good-exp} is more quadratic than the one in
Figure~\ref{fig:W2-bad-exp}, while the latter is nonconvex. The larger
the $b$, the more suppressed the negative counterparts of the
waveform. Similar patterns are also observed in
Figure~\ref{fig:W2-bad-logi} and~\ref{fig:W2-good-logi} for the
softplus scaling function. Nevertheless, one should note that an
extremely large $b$ is not preferred for the exponential scaling due
to the risk of noise amplification and potentially machine
overflow. Empirically, $b$ should be chosen properly in the range
$0.2 \leq \|bf\|_{\ell_\infty} \leq 6$. In this sense, the softplus
scaling $\sigma_s$ is more stable than the exponential scaling.

We present an extended result based on~Theorem~\ref{thm:
  biconvex}. Here, we consider \textit{signed} functions $g$ and
$f_\Theta$ defined by~\eqref{eq:f_theta}, compactly supported on
$\Pi_{\mathscr{K}}, \forall \Theta \in \mathscr{K}$. Here,
$\mathscr{K}$ is a compact subset of $\R^{2d}$ that represents the
set of transformation parameters. Under these assumptions, the
convexity of the $W_2$ metric is preserved when comparing signed
functions.

\begin{corollary}[Convexity of $W_2$ with the softplus scaling]
  \label{thm: softplus}
  Let $\widetilde{f_{\Theta,b}}$ and $\widetilde{g_b}$ be normalized
  functions of $f_{\Theta}$ and $g$ based on the softplus
  scaling~\eqref{eq:softplus} with hyperparameter $b$. Then, there is
  $b^*\in \R^+$ such that
  $I(\Theta,b) = W^2_2(\widetilde{f_{\Theta,b}},\widetilde{g_b})$ is
  strictly convex with respect to $\Theta$ if $b > b^*$.
\end{corollary}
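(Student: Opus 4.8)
The plan is to pass to the ReLU limit $b\to\infty$ of the softplus scaling and reduce to Theorem~\ref{thm: biconvex}. Since $\tfrac1b\log(e^{bx}+1)\to x^{+}:=\max(x,0)$ with $\sup_{x\in\R}\bigl|\tfrac1b\log(e^{bx}+1)-x^{+}\bigr|\le(\log2)/b$, and the normalization in~\eqref{eq:DN} (with $c=0$) is invariant under the rescaling $\sigma_s\mapsto\tfrac1b\sigma_s$, the normalized data $\widetilde{g_b}$ converge to $\widetilde{g^{+}}:=g^{+}/\|g^{+}\|_{L^{1}}$ (here $\|g^{+}\|_{L^{1}}>0$ since a genuine signal is positive somewhere). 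The key structural observation is the identity $\sigma_s\bigl(f_{\Theta}(x);b\bigr)=\sigma_s\bigl(g(A(x-s));b\det A\bigr)$ with $s=\sum_{k=1}^{d}s_ke_k$, which together with a change of variables gives $\widetilde{f_{\Theta,b}}=\det(A)\,\widetilde{g_{\beta}}\bigl(A(\cdot-s)\bigr)$; that is, $\widetilde{f_{\Theta,b}}$ is exactly the translation--dilation transform~\eqref{eq:f_theta} of $\widetilde{g_{\beta}}$ with an \emph{effective} hyperparameter $\beta=\beta(\Theta,b)=b\det A$. On the compact parameter set $\mathscr K$ one has $\lambda_k\in[\lambda_{\min},\lambda_{\max}]\subset\R^{+}$, hence $\beta\ge b\,\lambda_{\max}^{-d}\to\infty$ uniformly in $\Theta$, so $\widetilde{g_{\beta}}\to\widetilde{g^{+}}$ as well. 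Consequently $I(\Theta,b)=W_2^{2}\bigl(\widetilde{f_{\Theta,b}},\widetilde{g_b}\bigr)\to I_{\infty}(\Theta):=W_2^{2}\bigl((\widetilde{g^{+}})_{\Theta},\widetilde{g^{+}}\bigr)$ pointwise in $\Theta$.

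By the computation carried out in the proof of Theorem~\ref{thm: biconvex}, applied with reference density $\widetilde{g^{+}}$ (which has finite second moment and, being the positive part of a genuine seismic signal, is non-degenerate in the sense that the strict inequality in~\eqref{eq:CSineq} holds), $I_{\infty}$ is a quadratic polynomial in $\Theta$ whose Hessian $H_{\infty}$ is a constant symmetric positive-definite matrix. Set $\delta:=\lambda_{\min}(H_{\infty})>0$.

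The remaining --- and main --- step is to upgrade this pointwise convergence to convergence in $C^{2}(\mathscr K)$, i.e.\ to show that the Hessian $H(\Theta,b)$ of $I(\cdot,b)$ converges to $H_{\infty}$ uniformly on $\mathscr K$. I would do this from the closed-form one-dimensional representation $W_2^{2}(\mu,\nu)=\int_0^1\!|F_\mu^{-1}-F_\nu^{-1}|^{2}\,dt$, differentiating twice in $\Theta$ under the integral. What is needed is that $\widetilde{f_{\Theta,b}}-(\widetilde{g^{+}})_{\Theta}$ and $\widetilde{g_b}-\widetilde{g^{+}}$, together with their first two $\Theta$-derivatives, tend to $0$ in $L^{1}$ uniformly on $\mathscr K$. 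Writing $\tfrac1\beta\sigma_s(x;\beta)=\tfrac1\beta\phi(\beta x)$ with $\phi(t)=\log(e^{t}+1)$, one computes $\partial_\beta\bigl[\tfrac1\beta\phi(\beta x)\bigr]=\beta^{-2}\psi(\beta x)$ and $\partial_\beta^{2}\bigl[\tfrac1\beta\phi(\beta x)\bigr]=\beta^{-3}\chi(\beta x)$ for bounded profiles $\psi,\chi$ that decay exponentially away from the origin. Under mild regularity of $g$ (say $g$ Lipschitz and not identically zero on any interval, so that $g^{+}$ is a clean truncation and $|\{|g|<\epsilon\}|=O(\epsilon)$), the chain-rule factors $\partial_\Theta\beta=O(b)$ coming from $\beta=b\det A$ are then absorbed by the extra powers of $\beta^{-1}=O(1/b)$ --- this is precisely why ``large $b$'' is needed: for pure translations $\beta\equiv b$ and Theorem~\ref{thm: biconvex} already applies verbatim, so the only genuine difficulty is the coupling between the dilation variables $\lambda_k$ and the effective hyperparameter. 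The smoothing inherent in the quantile map then converts these $L^{1}$ bounds on the density perturbations into the claimed uniform control of $H(\Theta,b)-H_{\infty}$.

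Granting the previous step, choose $b^{*}$ so large that $\|H(\Theta,b)-H_{\infty}\|<\delta/2$ for all $\Theta\in\mathscr K$ whenever $b>b^{*}$; then $H(\Theta,b)$ is positive definite throughout $\mathscr K$, and hence $I(\cdot,b)=W_2^{2}(\widetilde{f_{\Theta,b}},\widetilde{g_b})$ is strictly convex on $\mathscr K$, which is the assertion. I expect the $C^{2}$ convergence of the third paragraph to be the real obstacle --- in particular the bookkeeping of the $\Theta$-derivatives that pass through $\beta=b\det A$, and the handling of the (generically null) zero set of $g$, where softplus approximates ReLU only to order $1/b$ rather than exponentially well --- whereas everything downstream of Theorem~\ref{thm: biconvex} is a routine compactness/perturbation argument once a uniformly positive-definite limiting Hessian is in hand.
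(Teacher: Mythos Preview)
Your approach is the same as the paper's: pass to the ReLU limit $b\to\infty$, invoke Theorem~\ref{thm: biconvex} on the limiting positive-part densities $\widetilde{f_\Theta^+},\widetilde{g^+}$ to get a strictly positive-definite limiting Hessian, then use Hessian convergence together with compactness of $\mathscr K$ to conclude for all large $b$. The paper's proof is considerably terser --- it simply asserts that $I(\Theta,b)$ is jointly smooth in $(\Theta,b)$ and that $H(\Theta,b)\to H^+$, without isolating the $C^2$-convergence step you correctly flag as the real work; your reparametrization $\widetilde{f_{\Theta,b}}=\det(A)\,\widetilde{g_{\beta}}(A(\cdot-s))$ with $\beta=b\det A$, and the attention to the zero set of $g$, are useful refinements not made explicit in the paper.
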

\begin{proof}
  One key observation is that $I(\Theta,b)$ is a smooth function of
  multivariable $\Theta$ and the scalar variable $b$.  As
  $b\rightarrow +\infty$,
  \[
    \lim \limits_{b\rightarrow +\infty}\widetilde{ f_{\Theta,b}} =
    \frac{f_\Theta^+}{\int_{\Pi_{\mathscr{K}}} f_\Theta^+ } \coloneqq
    \widetilde{f_\Theta^+}, \quad \lim \limits_{b\rightarrow
      +\infty}\widetilde{ g_{b} }= \frac{g^+}{\int_{\Pi_{\mathscr{K}}}
      g^+} \coloneqq \widetilde{g^+}.
  \]
  Since $\widetilde{f_\Theta^+}$ and $\widetilde{g^+}$ are nonnegative
  functions with equal total sum,
  \[
\lim \limits_{b\rightarrow +\infty} I(\Theta, b) =
W_2^2(\widetilde{f_\Theta^+} ,\widetilde{g^+})
\]
is strictly convex
  in $\Theta$ by Theorem~\ref{thm: biconvex}. The Hessian of
  $W_2^2(\widetilde{f_\Theta^+} ,\widetilde{g^+})$ in $\Theta$,
  $H^+(\Theta)$, is symmetric positive definite for all
  $\Theta \in \mathscr{K}$. If we denote the Hessian of $I(\Theta,b)$
  with respect to $\Theta$ as $H(\Theta,b)$, which is also a
  matrix-valued smooth function in $\Theta$ and $b$, then
  $\lim_{b\rightarrow +\infty} H(\Theta, b) =
  H_\Theta^+$. Therefore, there is a $b^*$ such that when $b>b^*$,
  $H(\Theta, b)$ is symmetric positive definite for all
  $\Theta \in \mathscr{K}$, which leads to the conclusion of the
  corollary.
\end{proof}

\subsection{Hyperparameter $c$: Huber-Type Property}
While the choice of $b$ is essential for preserving the ideal
convexity of the $W_2$ metric with respect to shifts
(Theorem~\ref{thm: biconvex}), the other hyperparameter $c\geq 0$
in~\eqref{eq:DN} regularizes the quadratic Wasserstein metric as a
``Huber-type'' norm, which can be generalized to the entire class of
normalization functions that satisfies Assumption~\ref{ass:DN}. In
statistics, the Huber norm~\cite{huber1973robust} is a loss function
used in robust regression that is less sensitive to outliers in data
than the squared error loss. For a vector $\mathbold{\eta}$, the Huber
norm of $s\mathbold{\eta}$, $s\geq 0$, is $\bO(s^2)$ for small $s$ and
$\bO(s)$ once $s$ is larger than a threshold. For optimal
transport-based FWI, the Huber property is good for not
overemphasizing the mass transport between seismic events that are far
apart and physically unrelated as $s^2 \gg s$ for large $s$. The big-O
notation is defined as follows.
\begin{definition}[Big-O notation]
  Let $f$ and $g$ be real-valued functions with domain $\R$. We say
  $f(x) = \bO(g(x))$ if there are positive constants $M$ and $k$ such
  that $|f(x)| \leq M|g(x)|$ for all $x \geq k$. The values of $M$ and
  $k$ must be fixed for the function $f$ and must not depend on $x$.
\end{definition}

Assuming ${\Pi_2}\subseteq \R$, we will next show that the positive
constant $c$ in the data normalization operator $P_\sigma$, defined
in~\eqref{eq:DN}, turns the $W_2$ metric into a ``Huber-type''
norm. The threshold for the transition between $\bO(s^2)$ and $\bO(s)$
depends on the constant $c$ and the support ${\Pi_2}$. The constant
$c$ is added \textit{after} signals become nonnegative, and the choice
of $\sigma$ in Assumption~\ref{ass:DN} is independent of the
Huber-type property. Without loss of generality, we state the theorem
in the context of probability densities to avoid unrelated discussions
on making data nonnegative.

\begin{theorem}[Huber-type property for 1D signal]\label{thm:Huber}
  Let $f$ and $g$ be probability density functions compactly supported
  on ${\Pi_2} \subseteq \mathbb{R}$ and $g(x)= f(x-s)$ on
  ${\Pi^s_2} = \{x\in \mathbb{R}: x-s\in {\Pi_2}\}$ and zero
  otherwise. Consider $\tilde{f}$ and $\tilde{g}$ as new density
  functions defined by linear normalization~\eqref{eq:linear} for a
  given $c>0$\tu; then
  \begin{equation*}
    W^2_2(\tilde{f},\tilde{g})=\begin{cases}
      \bO(|s|^2) & \text{if $|s| \leq \frac{1}{c}+|\!\supp (f)|$},\\
      \bO(|s|)      & \text{otherwise}.
    \end{cases}
  \end{equation*}
\end{theorem}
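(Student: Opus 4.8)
The plan is to reduce everything to an explicit one-dimensional computation, since the closed-form solution of the 1D optimal transport problem makes $W_2^2$ between $\tilde f$ and $\tilde g$ completely computable in terms of cumulative distribution functions. First I would write down the normalized densities explicitly: with $c>0$ and $|\Pi_2| = L$ (the length of the support), $\tilde f = (f+c)/(1+cL)$ and similarly for $\tilde g$, so that $\tilde f$ and $\tilde g$ are genuine probability densities on $\Pi_2$ and $\Pi_2^s$ respectively. Since $W_2$ in 1D is computed via the quantile functions, I would express $W_2^2(\tilde f,\tilde g) = \int_0^1 |F_{\tilde f}^{-1}(\theta) - F_{\tilde g}^{-1}(\theta)|^2\,d\theta$, where $F$ denotes the CDF. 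The key structural fact is that $\tilde g$ is a shifted copy of $\tilde f$ \emph{on the overlapping region}, but where the supports $\Pi_2$ and $\Pi_2^s$ fail to overlap, the density is just the constant floor $c/(1+cL)$ rather than the shifted profile; this is exactly where the two regimes come from.

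The key steps, in order: (1) Fix notation and compute $|\supp(f)| =: w$ so the threshold reads $|s| \le 1/c + w$. (2) Analyze the small-shift case $|s| \le 1/c + w$: here I claim the optimal map essentially transports the ``bump'' part $f$ by roughly $s$ while the constant background is rearranged over a region of total length controlled by $1/c + w$; because the background mass near the boundary is bounded and the displacement of any unit of mass is $O(|s|)$, the total cost is $O(|s|^2)$. The cleanest way is probably to exhibit a competitor map (translation by $s$ on the support of the bump, plus a local correction near the support boundary to account for the truncation of the background), bound its cost by $O(|s|^2)$, and use optimality of $W_2$ for the upper bound; the matching lower bound $\gtrsim |s|^2$ follows because the first moments of $\tilde f$ and $\tilde g$ differ by $\Theta(s)$ (mass conservation forces at least $O(|s|^2)$ cost by, e.g., the fact that $W_2^2 \ge (\text{difference of means})^2$). (3) Analyze the large-shift case $|s| > 1/c + w$: now the ``bump'' of $\tilde f$ must travel a distance $\gtrsim |s|$ to reach where the bump of $\tilde g$ lives, but it only carries total mass $\approx c/(1+cL) \cdot$ (something that does \emph{not} grow with $s$ once the non-overlap is total) — actually the mass being moved the long distance $|s|$ is the \emph{fixed} bump mass, so the dominant contribution to the cost scales like (fixed mass)$\times |s| \cdot$... wait, that would be $O(|s|^2)$ by itself. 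The resolution, which I would make precise, is that the total \emph{mass} that must be moved a distance of order $|s|$ is itself of order $1/|s|$: because the density is bounded below by $c/(1+cL)$ everywhere on a domain whose length grows with $|s|$ — no; more carefully, the normalization constant $1+cL$ with $L \approx |s|$ growing means each local density is $O(1/|s|)$, so moving an $O(|s|)$-length chunk of it a distance $O(|s|)$ costs $O(1/|s|) \cdot |s| \cdot |s|^2$... I need to recount. The honest statement is: in the large-$s$ regime, $\Pi_2 \cup \Pi_2^s$ has length $\approx |s|$, so the uniform background contributes density $\approx c/(c|s|) = 1/|s|$, and the optimal transport between two nearly-uniform densities on overlapping intervals of length $\approx |s|$ that are offset by $|s|$ gives $W_2^2 \approx \int_0^1 |\text{quantile difference}|^2 d\theta$ where the quantile differences are $O(|s|)$ on a $\theta$-set but the CONTRIBUTION integrates to $O(|s|)$ — this is the classical computation for $W_2^2$ between $U[0,\ell]$ and $U[s,\ell+s]$ which equals $s^2$ when they don't overlap but, crucially, $\tilde f$ and $\tilde g$ have the \emph{same} total mass spread over supports of the same length $L$, and when $|s|>L$... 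I would just directly compute $W_2^2$ between the two explicit step-plus-bump densities using CDFs and extract the asymptotics.

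Given the subtlety above, the step I expect to be the main obstacle is pinning down the large-shift regime and verifying it is genuinely $O(|s|)$ and not $O(|s|^2)$: the mechanism is that once $|s|$ exceeds $1/c + w$, the supports $\Pi_2$ and $\Pi_2^s$ become disjoint, but $\tilde f$ and $\tilde g$ are \emph{not} disjointly supported as densities on the common reference domain used for the $W_2$ computation — rather, the relevant computation takes place on $\Pi_2 \cup \Pi_2^s \cup (\text{gap})$, whose total length is $\Theta(|s|)$, forcing the normalization constant to scale like $|s|$ and hence the densities to scale like $1/|s|$; the $W_2^2$ cost between two such ``dilute'' configurations offset by $|s|$ then works out, via the explicit 1D quantile formula, to $\Theta(|s|)$ rather than $\Theta(|s|^2)$. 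I would make this rigorous by computing the quantile functions $F_{\tilde f}^{-1}$ and $F_{\tilde g}^{-1}$ piecewise (constant-slope pieces from the background, plus the bump piece) and integrating $|F_{\tilde f}^{-1} - F_{\tilde g}^{-1}|^2$ explicitly, then reading off leading-order behavior in $|s|$ in each of the two ranges; the small-$s$ side is then the easy case where a translation competitor plus the mean-difference lower bound sandwiches the cost at $\Theta(|s|^2)$.
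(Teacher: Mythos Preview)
Your proposal has a genuine gap in the large-$|s|$ regime, stemming from a misreading of the setup. The domain $\Pi_2$ is \emph{fixed}, and both $\tilde f$ and $\tilde g$ are normalized over this \emph{same} domain: $\tilde f=(f+c)/(1+c|\Pi_2|)$ and $\tilde g=(g+c)/(1+c|\Pi_2|)$, each supported on $\Pi_2$. The normalization constant $1+c|\Pi_2|$ does not grow with $|s|$, so your dilution mechanism (densities scaling like $1/|s|$) is not what is happening. A related consequence you miss: $\tilde g$ is \emph{not} a translate of $\tilde f$. The constant floor $c_1:=c/(1+c|\Pi_2|)$ sits on the same interval $\Pi_2$ for both; only the bump moves. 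This also breaks your small-$s$ upper bound, since the map $x\mapsto x+s$ does not push $\tilde f$ onto $\tilde g$.

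The correct mechanism for $\mathcal O(|s|)$, which the paper obtains by direct quantile computation, is this: write $\Pi_2=[0,L]$, $\supp(f)=[a_1,a_2]$. Outside the bumps both $\tilde F^{-1}$ and $\tilde G^{-1}$ are affine with the same slope $1/c_1$. In the quantile range $[y_2,y_3]$ with $y_2=c_1a_2+1/(1+cL)$ and $y_3=c_1(a_1+s)$, the quantile of $\tilde f$ lies past its bump while that of $\tilde g$ lies before its bump; subtracting the two affine pieces gives a \emph{constant} difference $|\tilde F^{-1}(y)-\tilde G^{-1}(y)|=1/c$, independent of $s$. The interval $[y_2,y_3]$ has length $c_1 s-\mathrm{const}$, so this block contributes $(y_3-y_2)/c^2=\mathcal O(|s|)$. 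The remaining quantile ranges $[y_1,y_2]$ and its symmetric counterpart have fixed length and bounded integrand, contributing $\mathcal O(1)$. The threshold $|s|=1/c+|a_2-a_1|$ is exactly where $y_3$ overtakes $y_2$. You do eventually say you would ``just directly compute'' via CDFs, which is precisely the paper's route, but the heuristic you carry into that computation is the wrong one and would not lead you to the constant displacement $1/c$ that drives the linear growth.
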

\begin{proof}Without loss of generality, we assume $f$ is compactly
  supported on an interval $[a_1,a_2]\subseteq {\Pi_2}$ and $s\geq 0$.
  Note that $\tilde{f}$ and $\tilde{g}$ are no longer compactly
  supported on the domain ${\Pi_2}$. In one dimension, one can solve the
  optimal transportation problem explicitly in terms of the cumulative
  distribution functions
  \[ F(x) = \int_0^x f(t) dt, \quad G(x) = \int_0^x g(t) dt. \] It
  is well-known~\cite[theorem~2.18]{Villani} that the optimal
  transportation cost is
\begin{equation}\label{eq:cost1D} W_2^2(f,g) =
  \int_0^1|F^{-1}(t)-G^{-1}(t)|^2\,dt.  \end{equation}If additionally the target
  density~$g$ is positive, the optimal map from $f$ to $g$ becomes
  \begin{equation}\label{eq:map1D} T(x) = G^{-1}(F(x)).  \end{equation}
  Based on the 1D
  explicit formula~\eqref{eq:cost1D},
  \begin{multline*}
       W^2_2(\tilde{f},\tilde{g}) =  \int_0^1 |\tilde{F}^{-1}(y) -  \tilde{G}^{-1}(y)|^2 dy =\\
      \begin{cases}
        2\int_{y_1}^{y_3} |{F}^{-1}(y) - \frac{y}{c_1}|^2\,dy+ \int_{y_3}^{y_2} |{F}^{-1}(y) - {F}^{-1}(y-c_1s)+s|^2\,dy, \\\quad |s| \leq \frac{1}{c}+|a_2-a_1|,\\[2\jot]
        2\int_{y_1}^{y_2}
        |\tilde{F}^{-1}(y)-\frac{y}{c_1}|^2\,dy+\int_{y_2}^{y_3}
        \frac{1}{c^2}\,dy \quad \text{otherwise}.
      \end{cases}
    \end{multline*}
  Here $F, G, \tilde{F}, \tilde{G}$ are cumulative distribution
  functions of $f, g, \tilde{f}, \tilde{g}$, respectively. $|{\Pi_2}|$
  denotes the Lebesgue measure of the bounded domain ${\Pi_2}$,
  \[
  c_1 = \frac{c}{1+c|{\Pi_2}|},\quad  y_1 = c_1a_1,\quad 
  y_2 = c_1a_2+\frac{1}{1+c|{\Pi_2}|},\q  y_3 = c_1a_1+c_1s.
\]
Since
  $y_1$ and $y_2$ are independent of $s$, it is not hard to show that
  $W^2_2(\tilde{f},\tilde{g})$ is linear in $s$ if
  $s> \frac{1}{c}+|a_2-a_1|$ while
  $W^2_2(\tilde{f},\tilde{g}) = \bO(s^2)$ if
  $0\leq s \leq \frac{1}{c}+|a_2-a_1|$.
\end{proof}

\begin{remark}
  For higher dimensions ${\Pi_2}\subseteq\R^d$, choosing the map $T$
  along the shift direction gives
  \smash{$W_2^2(\tilde{f},\tilde{g}) \leq \bO(s)$} for large $s$. The optimal
  map can reduce \smash{$W_2^2(\tilde{f},\tilde{g})$} to $\bO(\log(s))$ if
  $d=2$, which grows more slowly as $s$ increases. We focus on $d=1$
  and do not elaborate the details for higher dimensions here since
  the trace-by-trace approach~\eqref{eqn:Wp1D} is mainly used in this
  paper and also in practice.
\end{remark}

Based on the subadditivity of the $W_2$ distance under rescaled
convolution~\cite{Villani},
\[
  W_2^2(\tilde{f}, \tilde{g} ) =W_2^2\biggl( \frac{{f}+c }{1+c|{\Pi_2}|},
   \frac{{g}+c }{1+c|{\Pi_2}|}\biggr) \leq \frac{W_2^2({f}, {g})
   }{(1+c|{\Pi_2}|)^2} \leq W_2^2({f}, {g}).
 \]
 The inequality shows that adding a constant $c$ to the data decreases
 the loss computed by the original objective function and explains the
 ``Huber-type'' property.

 \begin{figure}
   \centering
  \subfloat[][]{\includegraphics[height=0.18\textwidth]{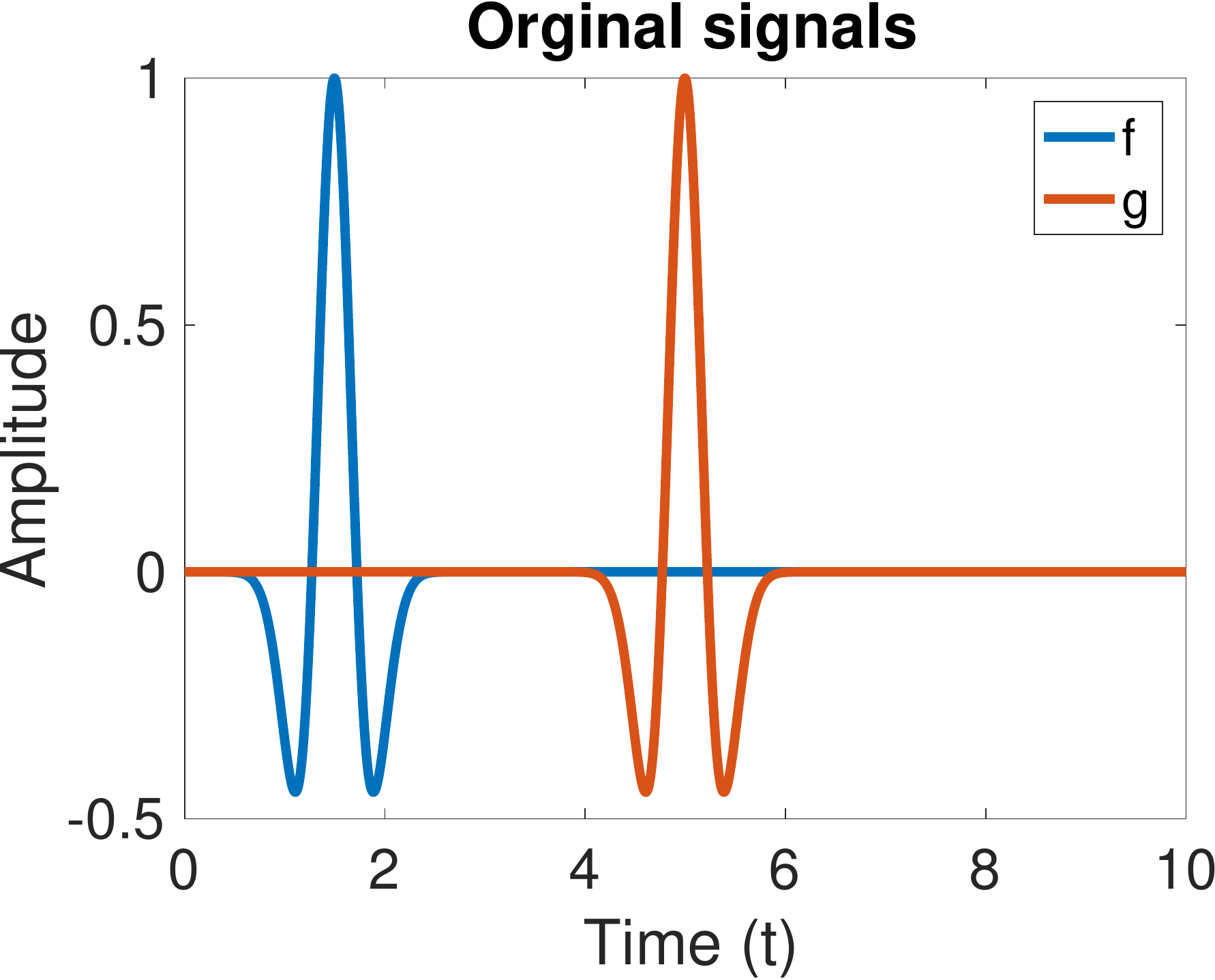}\label{fig:Huber_data0}}\hfill 
  \subfloat[][]{\includegraphics[height=0.18\textwidth]{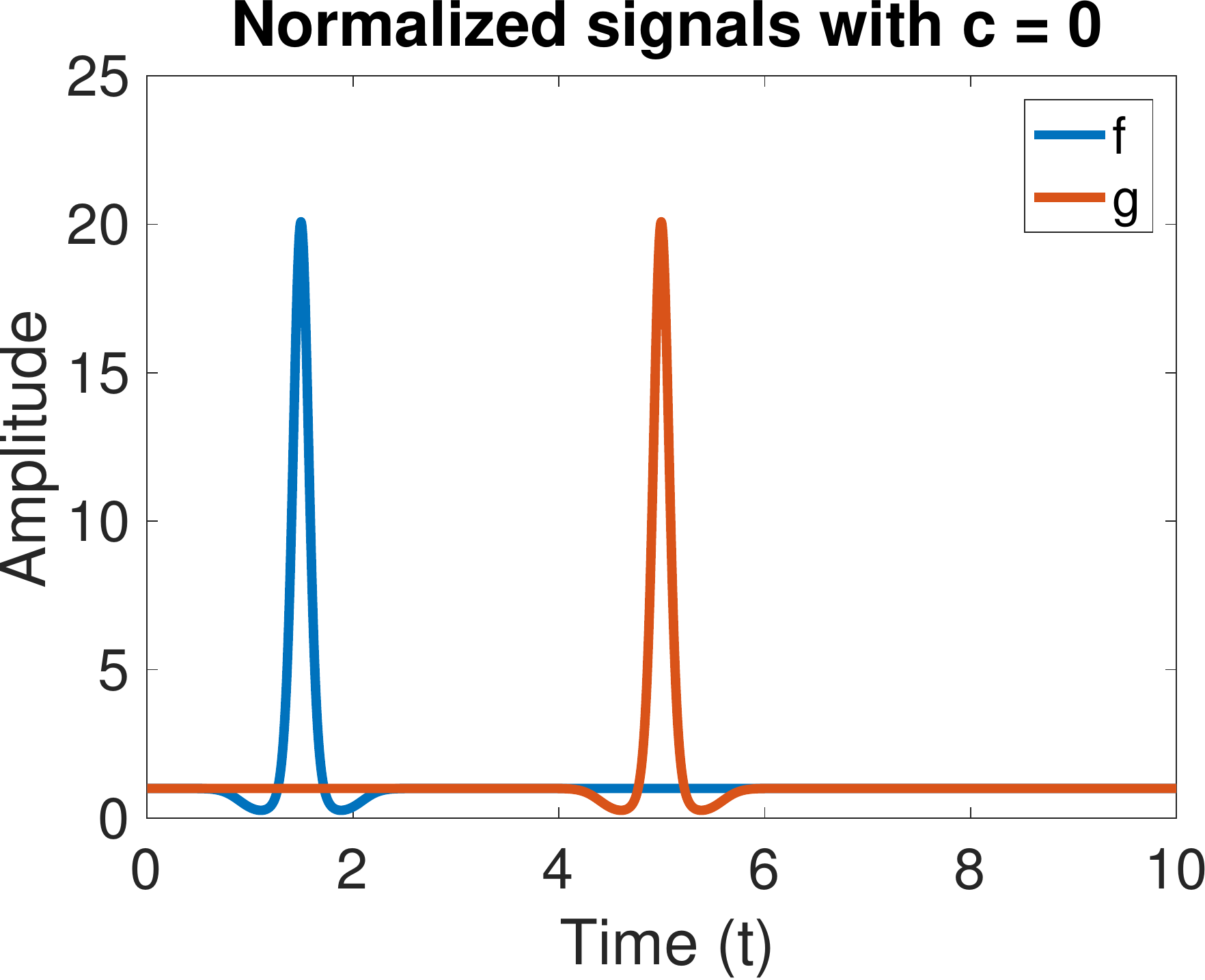}\label{fig:Huber_C0}}\hfill 
  \subfloat[][]{\includegraphics[height=0.18\textwidth]{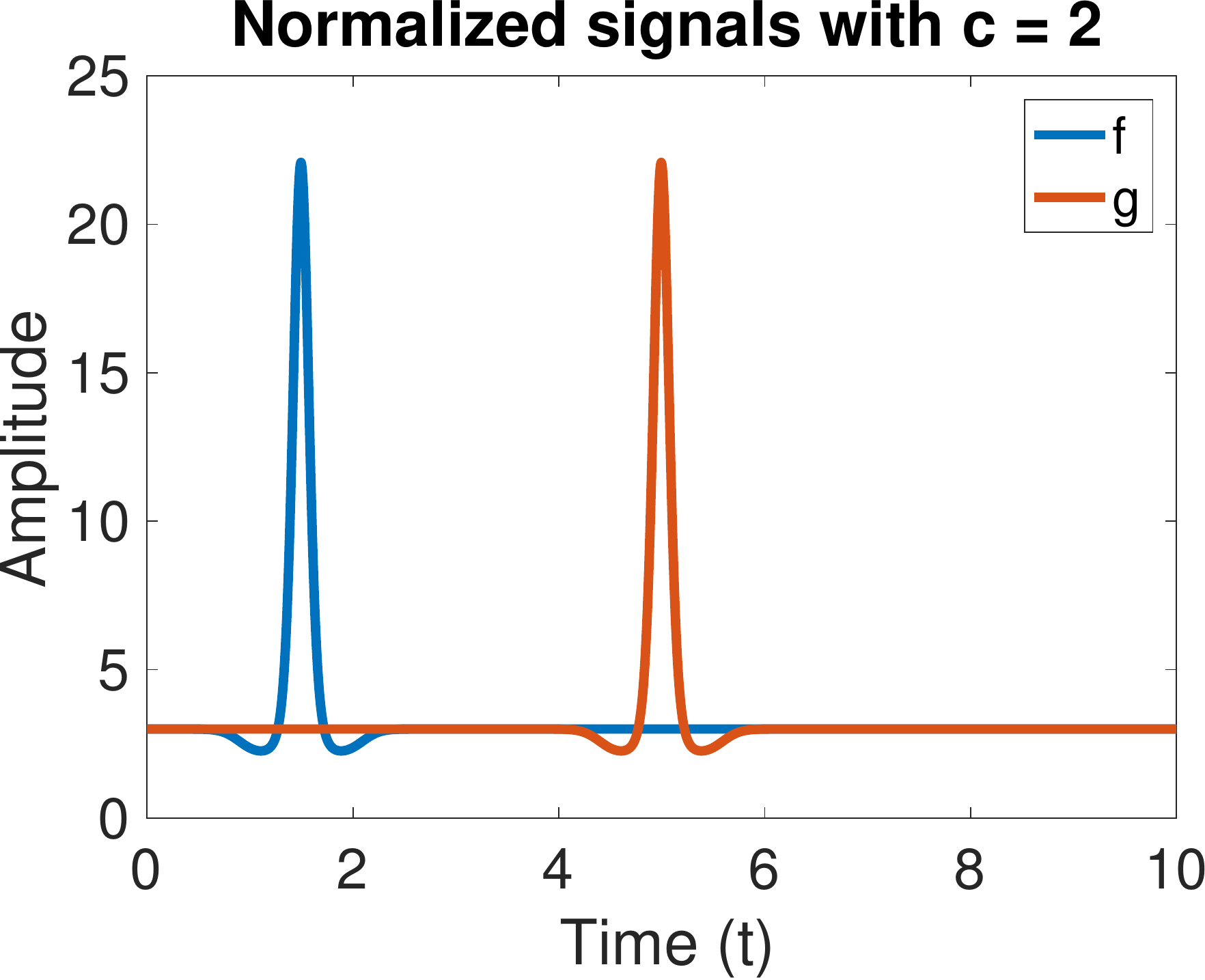}\label{fig:Huber_C2}}\hfill 
   \subfloat[][]{\includegraphics[height=0.18\textwidth]{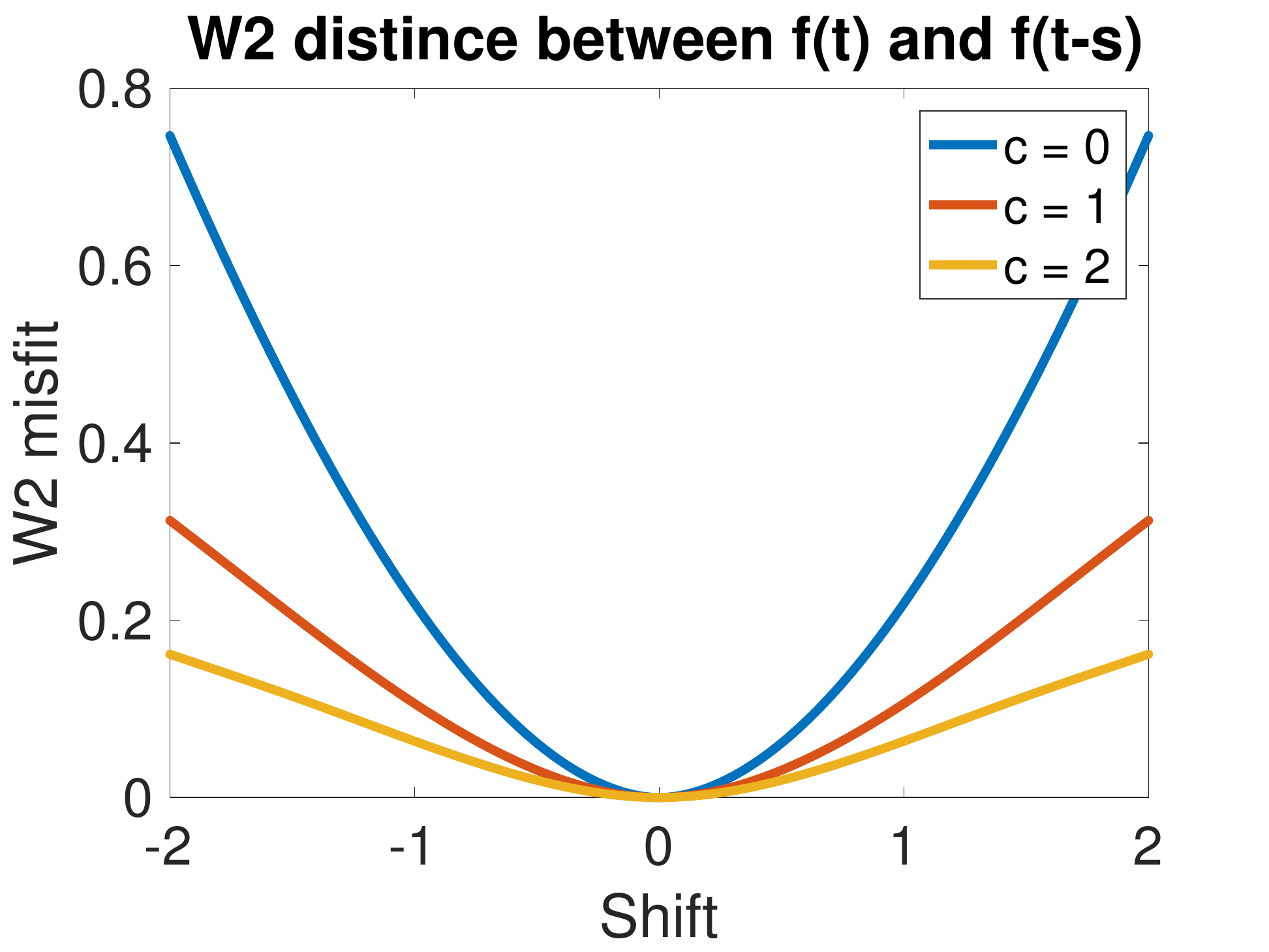}\label{fig:Huber_3c}}
  \caption{(a) The original signal $f$ and $g = f(t-s)$; (b) 
    normalized signals $\tilde{f}$ and $\tilde{g}$ with $c = 0$ and
    (c)~with $c = 2$ in~\eqref{eq:DN}; parameter $b$ is fixed in both
    (b) and (c). (d) \vph{$W(\tilde{f},\tilde{g})$}The Huber effect of linear constant $c$ on the
    loss function $W(\tilde{f},\tilde{g})$ as a function of the shift
    $s$.}
  \label{fig:Huber_1D_1}
\end{figure}

The Huber-type property is also often observed in numerical
tests. Consider functions $f$ and $g$ where $f$ is a single Ricker
wavelet and $g = f(t-s)$; see Figure~\ref{fig:Huber_data0}. We apply
the softplus scaling $\sigma_s$ in~\eqref{eq:softplus} to obtain
probability densities $\tilde{f}$ and $\tilde{g}$. According to
Corollary~\ref{thm: softplus}, with a proper choice of the
hyperparameter $b$, the convexity proved in Theorem~\ref{thm:
  biconvex} holds. Thus, the Huber-type property proved in
Theorem~\ref{thm:Huber} still applies. With $b$ fixed,
Figure~\ref{fig:Huber_3c} shows the optimization landscape of the
objective function with respect to shift $s$ for different choices of
$c$. We observe that as $c$ increases, the objective function becomes
less quadratic and more linear for the large shift, which conveys the
same message as Theorem~\ref{thm:Huber}. Figure~\ref{fig:Huber_C0}
shows the normalized Ricker wavelets for $c=0$; 
Figure~\ref{fig:Huber_C2} shows the data with constant $c=2$ applied
in~\eqref{eq:DN}. The major differences between the original signal
and the normalized ones are that we suppress the negative counterparts
of the wavelets while stretching the positive peaks. The phases remain
unchanged, and the original signal can be recovered since $\sigma_s$
is a one-to-one function that satisfies Assumption~\ref{ass:DN}.

\subsection{The Gradient-Smoothing Property}
In this section, we demonstrate another important property, which is
to improve the regularity of the optimal map $T$ as a result of
adding the constant $c$ in~\eqref{eq:DN}.

\begin{figure}
  \centering
  \subfloat[]{\includegraphics[width=0.45\textwidth]{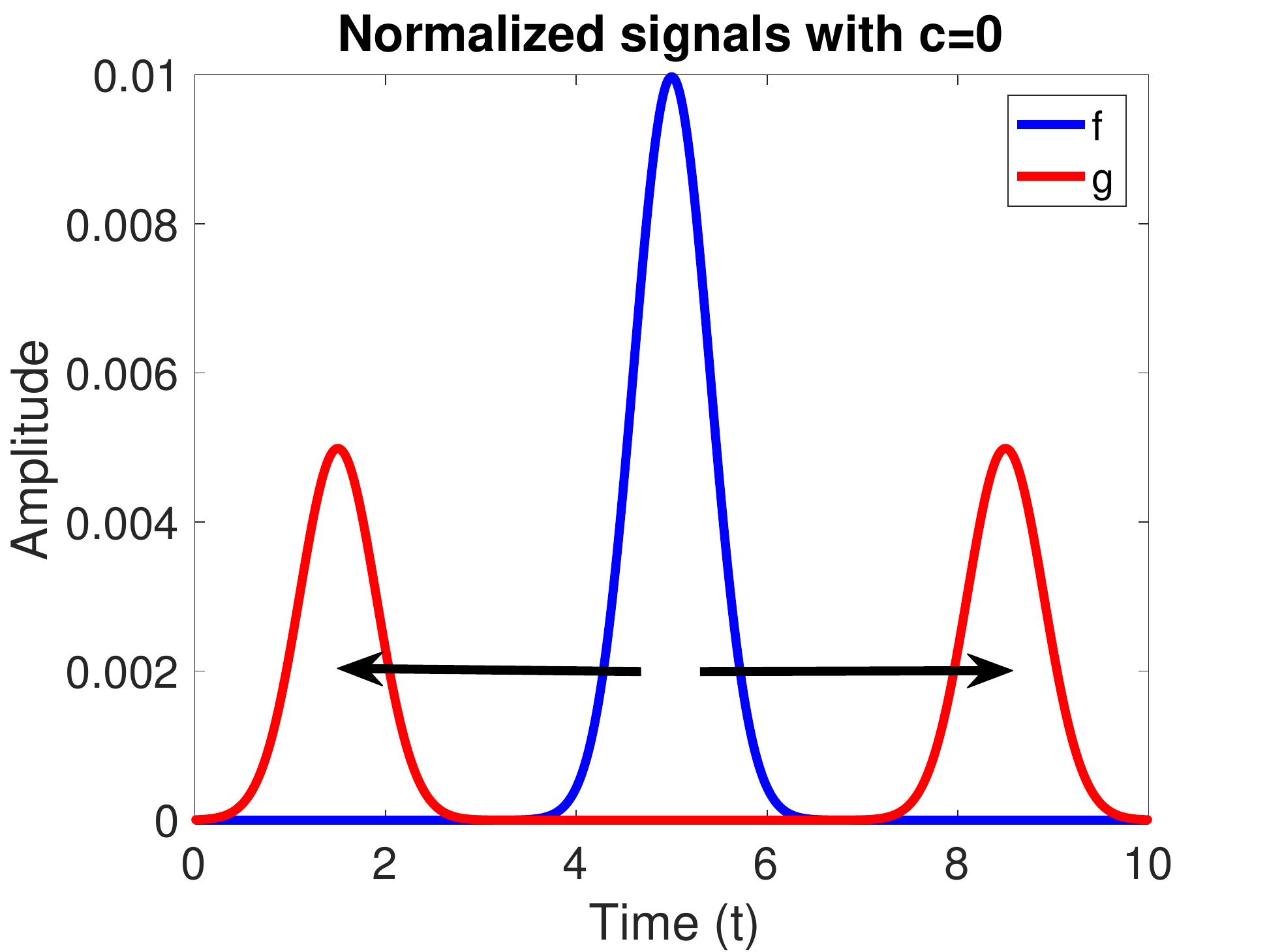}\label{fig:Huber_Map1}}
  \subfloat[]{\includegraphics[width=0.45\textwidth]{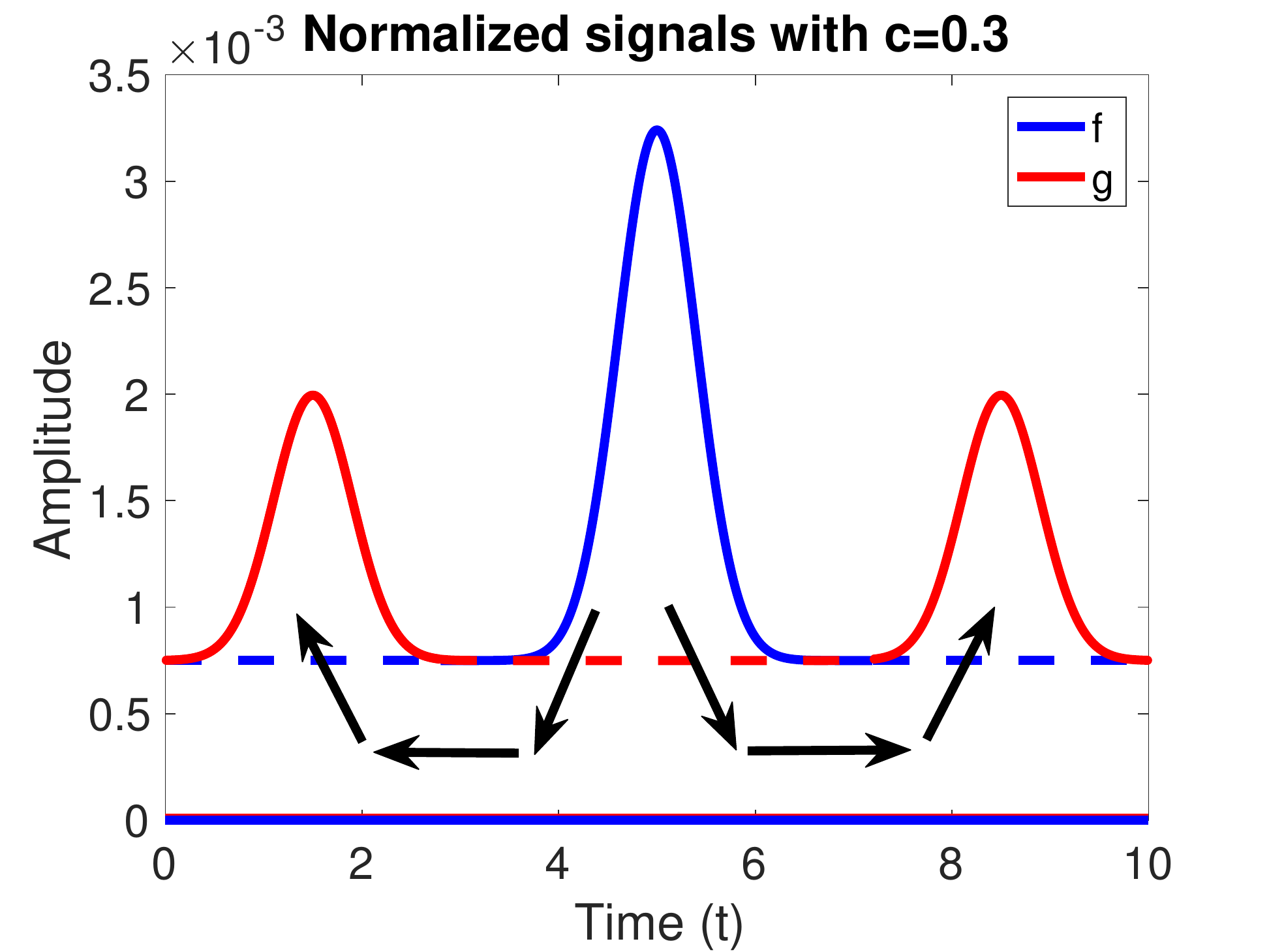}\label{fig:Huber_Map2}}\\[-2\jot]
  \caption{The black arrows represent the optimal map $T$. (a)
    Function $f$ is compactly supported on $[3.5,6.5]$. Function $g$
    is compactly supported on $[0,3]\cup[7,10]$. The optimal map
    between $f$ and $g$ is discontinuous at $t=5$. (b) Signals $f$ and
    $g$ after linear normalization. The optimal map $T$ becomes a
    continuous function by adding the constant $c=0.3$ as stated in
    Theorem~\ref{thm:map_smooth}.}\label{fig:Huber_Map}
\end{figure}

Based on the settings of the optimal transport problem, the optimal map is
often \textit{discontinuous} even if $f, g\in C^{\infty}$; see
Figure~\ref{fig:Huber_Map1} for an example in which the corresponding
cumulative distribution functions $F$ and $G$ are monotone but not
strictly monotone. However, by adding a positive constant $c$ to the
signals, we have improved the smoothness for the optimal map, as seen
in Figure~\ref{fig:Huber_Map2}. The smoothing property is based on the
following Theorem~\ref{thm:map_smooth}. Since the positive constant
$c$ is applied \textit{after} the signed functions are normalized to
be probability densities in Assumption~\ref{ass:DN}, we will hereafter
show the improved regularity by applying the linear
scaling~\eqref{eq:linear} to the probability densities with a positive
constant~$c$.

\begin{theorem}[The smoothing property in the
  map]\label{thm:map_smooth} Consider bounded probability density
  functions $f, g \in C^{k,\alpha} ({\Pi_2})$ where ${\Pi_2}$ is a
  bounded convex domain in $\mathbb{R}^d$ and $0<\alpha < 1$. If the
  normalized signals $\tilde{f}$ and $\tilde{g}$ are obtained by the
  linear scaling~\eqref{eq:linear} with $c>0$, then the optimal map
  $T$ between $\tilde{f}$and $\tilde{g}$ is $C^{k+1,\alpha}({\Pi_2})$.
\end{theorem}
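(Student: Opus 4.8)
The plan is to identify the optimal map with the gradient of a Brenier potential solving a \MA equation, to observe that the positive constant $c$ turns that equation into a uniformly elliptic equation with Hölder data, and then to run the Caffarelli--Schauder regularity bootstrap. Concretely, in \textbf{Step 1} I would note that since $f,g\ge 0$ are bounded and lie in $C^{k,\alpha}(\Pi_2)$, the linear scaling~\eqref{eq:linear} produces $\tilde f=(f+c)/(1+c|\Pi_2|)$ and $\tilde g=(g+c)/(1+c|\Pi_2|)$, which are still in $C^{k,\alpha}(\Pi_2)$ and satisfy $0<\lambda\le \tilde f,\tilde g\le\Lambda<\infty$ on $\overline{\Pi_2}$ with $\lambda=c/(1+c|\Pi_2|)$. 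Since $\tilde f\,dx$ is absolutely continuous with finite second moment, Brenier's theorem~\cite{brenier1991polar} produces the unique optimal map $T=\nabla\psi$ for a convex potential $\psi$, and $\psi$ is a Brenier/Alexandrov solution of
\bq
\det\bigl(D^2\psi(x)\bigr)=\frac{\tilde f(x)}{\tilde g\bigl(\nabla\psi(x)\bigr)}\qquad\text{on }\Pi_2 ,
\eq
whose right-hand side is bounded between the positive constants $\lambda/\Lambda$ and $\Lambda/\lambda$.

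\textbf{Step 2} establishes interior $C^{2,\alpha}$ regularity of $\psi$. Because the target support $\Pi_2$ is convex and $\lambda\le\tilde f,\tilde g\le\Lambda$, Caffarelli's regularity theory~\cite{villani2008optimal} applies: $\psi$ is strictly convex in the interior and $\psi\in C^{1,\beta}_{\text{loc}}(\Pi_2)$ for some $\beta\in(0,1)$. Then $\nabla\psi$ is continuous, so $\tilde g\circ\nabla\psi$ is Hölder and still lies between positive constants, and Caffarelli's interior estimate for the \MA equation with Hölder right-hand side gives $\psi\in C^{2,\gamma}_{\text{loc}}(\Pi_2)$ for some $\gamma\in(0,1)$. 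In particular $\nabla\psi$ is now Lipschitz, so $\tilde g\circ\nabla\psi\in C^{0,\alpha}_{\text{loc}}$, the right-hand side lies in $C^{0,\alpha}_{\text{loc}}$, and a second application upgrades this to $\psi\in C^{2,\alpha}_{\text{loc}}(\Pi_2)$. Strict convexity together with $\psi\in C^2$ forces $D^2\psi>0$ pointwise, so on every compact subset of $\Pi_2$ the equation is uniformly elliptic. For $k=0$ this is already the claim, since then $T=\nabla\psi\in C^{1,\alpha}_{\text{loc}}(\Pi_2)$.

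\textbf{Step 3} is the elliptic bootstrap for $k\ge1$. Differentiating the equation in a coordinate direction $e$, the function $\partial_e\psi$ solves a linear uniformly elliptic equation whose coefficients are built from the cofactor matrix of $D^2\psi$ and from the $p$-derivatives of $(x,p)\mapsto\tilde f(x)/\tilde g(p)$ evaluated at $(x,\nabla\psi(x))$, and whose right-hand side is built from the $x$-derivatives of the same map. If $\psi\in C^{j,\alpha}_{\text{loc}}(\Pi_2)$ for some $2\le j\le k+1$, then $D^2\psi\in C^{j-2,\alpha}_{\text{loc}}$ and $\nabla\psi\in C^{j-1,\alpha}_{\text{loc}}$; since $\tilde f,\tilde g\in C^{k,\alpha}$ and composition with a $C^{j-1,\alpha}$ (hence Lipschitz) map preserves Hölder classes, all the coefficients and the right-hand side lie in $C^{j-2,\alpha}_{\text{loc}}$, and interior Schauder estimates yield $\partial_e\psi\in C^{j,\alpha}_{\text{loc}}$, i.e.\ $\psi\in C^{j+1,\alpha}_{\text{loc}}$. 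Iterating from $j=2$ up to $j=k+1$ gives $\psi\in C^{k+2,\alpha}_{\text{loc}}(\Pi_2)$, hence $T=\nabla\psi\in C^{k+1,\alpha}_{\text{loc}}(\Pi_2)$. (Regularity up to $\partial\Pi_2$ would need, in addition, $\partial\Pi_2$ smooth and uniformly convex, via Caffarelli's boundary theory.)

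The main obstacle is Step 2, and the reason $c>0$ is indispensable. If $c=0$ the normalized density $\tilde f$ may vanish somewhere in $\Pi_2$, the \MA equation degenerates, Caffarelli's hypotheses fail, and $T$ can genuinely be discontinuous --- this is exactly the phenomenon illustrated in Figure~\ref{fig:Huber_Map1}. Once $c>0$ restores the uniform lower bound $\lambda>0$, everything reduces to the standard Caffarelli--Schauder machinery; the only residual care, in Step 3, is the bookkeeping that each composition $\tilde g\circ\nabla\psi$ retains the exponent $\alpha$ as soon as $\nabla\psi$ is Lipschitz, so that the bootstrap does not lose Hölder exponent.
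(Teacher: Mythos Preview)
Your argument is correct and, for $d\ge2$, follows the same route as the paper: Brenier potential, \MA equation, lower bound $\tilde f,\tilde g\ge c/(1+c|\Pi_2|)$ from the linear scaling, and Caffarelli's regularity theory~\cite{caffarelli1992regularity} to conclude $\psi\in C^{k+2,\alpha}$. The paper simply invokes Caffarelli in one line, whereas you spell out the $C^{1,\beta}\to C^{2,\gamma}\to C^{2,\alpha}$ upgrade and the subsequent Schauder bootstrap; that extra care is not wasted, since it makes transparent why the exponent $\alpha$ is preserved.

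Two differences are worth noting. First, the paper treats $d=1$ separately via the explicit formula $T=\tilde G^{-1}\circ\tilde F$ from~\eqref{eq:map1D}: adding $c>0$ forces $\tilde F,\tilde G$ to be strictly increasing $C^{k+1,\alpha}$ diffeomorphisms, so $T\in C^{k+1,\alpha}$ by composition. This is more elementary than going through \MA in one dimension, and it is the case that actually matters for the trace-by-trace objective~\eqref{eqn:Wp1D}. Second, you are more scrupulous than the paper in flagging that Caffarelli's theory, under the stated hypothesis that $\Pi_2$ is merely bounded and convex, yields $T\in C^{k+1,\alpha}_{\mathrm{loc}}(\Pi_2)$; global $C^{k+1,\alpha}(\overline{\Pi_2})$ would require additional smoothness and uniform convexity of $\partial\Pi_2$. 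The paper's proof glosses over this distinction.
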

\begin{proof}
  We first consider the case of $d=1$. By adding a nonzero constant to
  $f$ and $g$, we guarantee that the cumulative distribution function
  of $\tilde{f}$ and $\tilde{g}$, i.e., $\tilde{F}$ and $\tilde{G}$,
  to be strictly monotone and thus invertible in the classical
  sense. The optimal map $T$ is explicitly determined
  in~\eqref{eq:map1D}. Since $f,g \in C^{k,\alpha}$,
  $\tilde{F}$, $\tilde{G}$, $\tilde{F}^{-1}$, $\tilde{G}^{-1}$, and $T$ are
  all in $C^{k+1,\alpha}$.

  For higher dimensions $d\geq 2$, one can characterize the optimal
  map by the following \MA equation~\cite{brenier1991polar}:
  \begin{equation} \label{eq:MAA} \det (D^2 u(x)) =
  \frac{\tilde{f}(x)}{\tilde{g}(\nabla u(x))}, \quad x \in {\Pi_2}.
\end{equation}
Note that $\tilde{f}$ and $\tilde{g}$ are bounded from below by
  $\frac{c}{1 + c|{\Pi_2}|}$.  Thus,
  $
  {\tilde f}/{\tilde g}\in C^{k,\alpha} ({\Pi_2})$. Since $\Pi_2$
  is convex, Caffarelli's regularity theory for optimal transportation
  applies~\cite{caffarelli1992regularity}. Thus, the solution $u$
  to~\eqref{eq:MAA} is $C^{k+2,\alpha}$ and the optimal transport map
  $T = \nabla u$ is $C^{k+1,\alpha}$~\cite{brenier1991polar}.
\end{proof}

One advantage of adding a constant in the data
normalization~\eqref{eq:DN} is to enlarge $\mathcal{M}$, the set of
all maps that rearrange the distribution $f$ into $g$. For example,
$f$ and $g$ shown in Figure~\ref{fig:Huber_Map1} are $C^\infty$
functions, but all feasible measure-preserving maps between them are
discontinuous. After normalizing $f$ and $g$ by adding a positive
constant such that they are strictly positive, smooth rearrangement
maps are available, and the optimal one is a $C^{\infty}$ function for
the case in Figure~\ref{fig:Huber_Map2}.
\note{Please redo Figure 4.4(b) to correct the \\typo ``Spetrum'' $\to$ ``Spectrum''.}

The regularity of the optimal map is crucial for
optimal-transport-based seismic inversion, particularly for
low-wavenumber reconstruction. A smooth optimal map improves the
smoothness of the data Fr\'{e}chet derivative
${\delta J}/{\delta f}$ where $J$ is the squared $W_2$ metric. We
will prove the statement in Corollary~\ref{thm:gradient_smooth}. The
term ${\delta J}/{\delta f}$ is also the source of the adjoint
wave equation~\eqref{eq:FWI_adj}, which directly determines the
frequency content of the adjoint wavefield $w$. We recall that the
model gradient $\frac{\delta J}{\delta m}$ is computed as a
convolution of the forward and the adjoint wavefields, $u$ and $w$
in~\eqref{eq:adj_grad3}. Thus, a smoother source term for the adjoint
wave equation~\eqref{eq:FWI_adj} results in a smoother adjoint
wavefield $w$, and therefore a smoother model gradient
$\frac{\delta J}{\delta m}$. A smoother gradient can be seen as
low-pass filtering of the original gradient and thus focuses on
low-wavenumber content for the subsurface model update.

\begin{figure}
  \centering
  \subfloat[]{\includegraphics[width=0.45\textwidth]{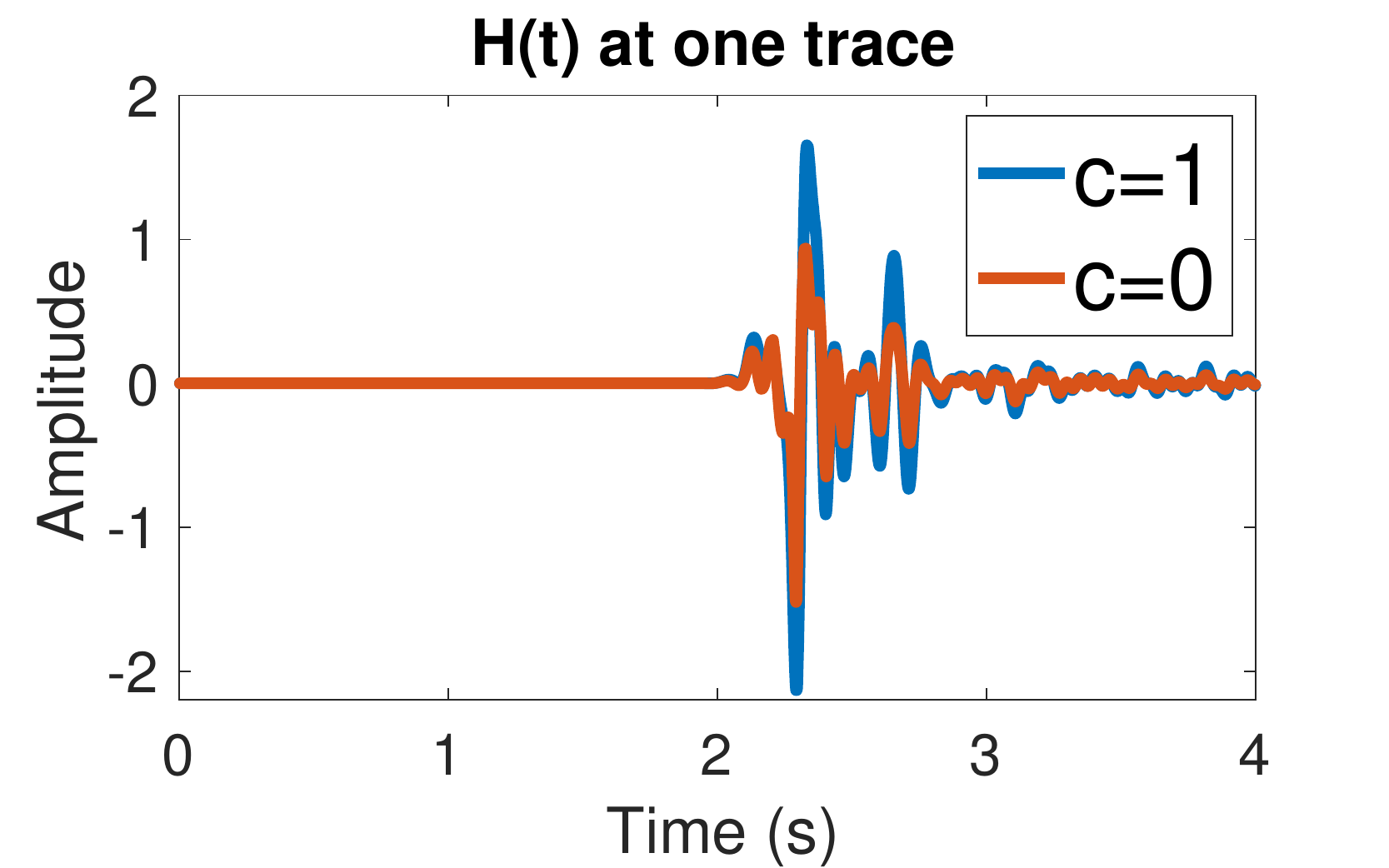}\label{fig:MarmDW2}}
  \hspace{0.04\textwidth}
  \subfloat[]{\includegraphics[width=0.45\textwidth]{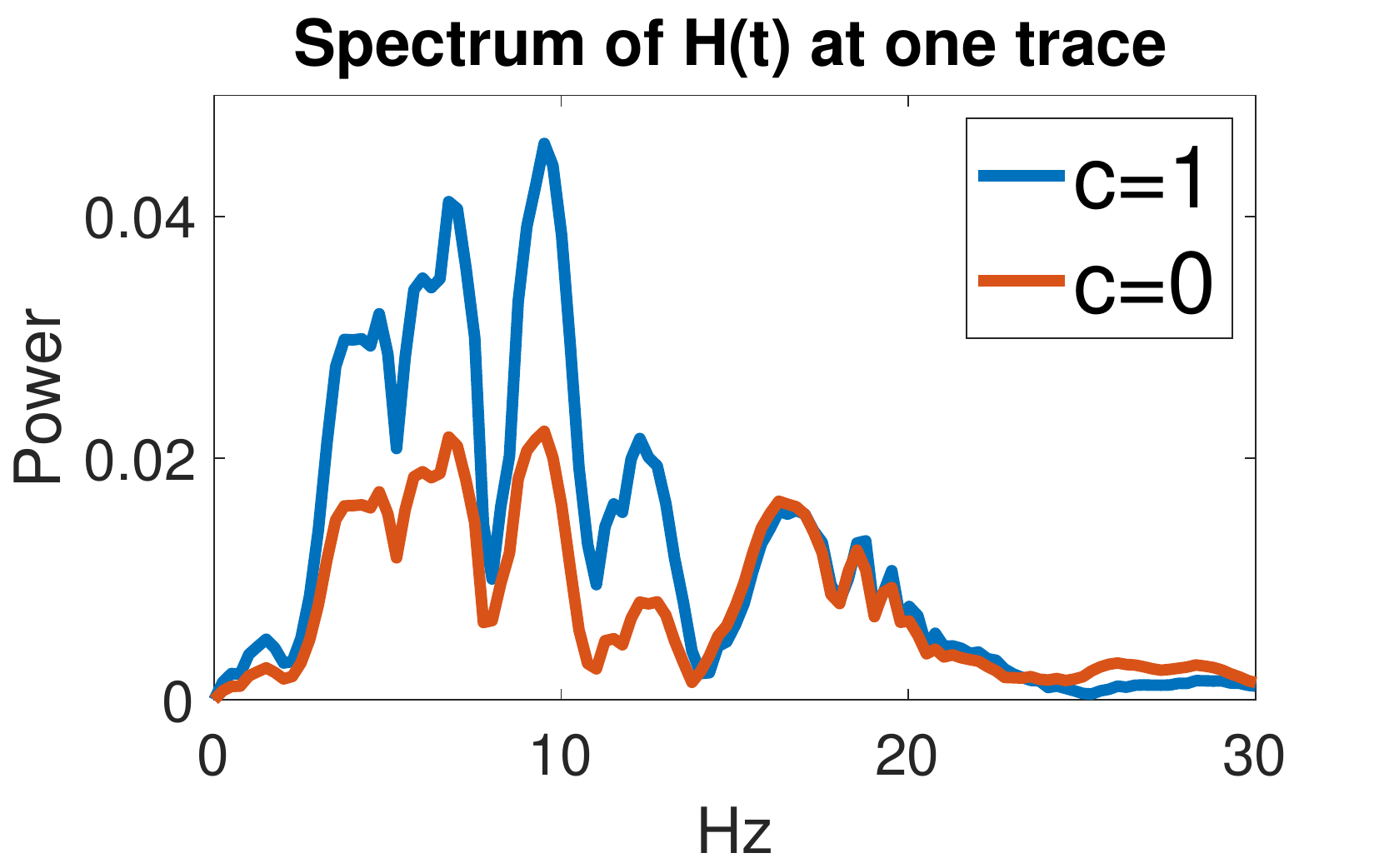}\label{fig:MarmDW2fft}}\\
  \caption{(a)~The $W_2$ data gradient~\eqref{eq:W2_Adj2} by linear
    scaling with $c=1$ (blue) and $c=0$ (red); (b) the Fourier
    transform of the functions in (a).}\label{fig:MarmDW2all}
\vspace{-1cm}
\end{figure}

Figure~\ref{fig:MarmDW2all} shows a 1D example in which the adjoint
source has stronger low-frequency modes after data normalization
following~\eqref{eq:DN} with $c>0$ than the one corresponding to the
normalization with $c=0$. The following corollary states the general
smoothing effect in the gradient by adding a positive constant $c$ in
the normalization of data.\smallskip

\begin{corollary}[The smoothing effect in the
  gradient]~\label{thm:gradient_smooth} Under the same assumptions of
  Theorem~\ref{thm:map_smooth}, the Fr\'{e}chet derivative of
  $W_2(\tilde{f},\tilde{g})$ with respect to $f$ is $C^{k+1,\alpha}$.
\end{corollary}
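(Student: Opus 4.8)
The plan is to compute the Fréchet derivative $\delta J/\delta f$ of $J = W_2^2(\tilde f,\tilde g)$ explicitly in one dimension using the closed-form solution to the 1D optimal transport problem, and then to read off its regularity from the regularity of the objects appearing in that formula, which was established in Theorem~\ref{thm:map_smooth}. The linearization splits into two pieces by the chain rule: first the dependence of $\tilde f = P_\sigma(f)$ on $f$ through the linear normalization~\eqref{eq:linear}, which is a smooth (indeed affine-up-to-normalization) operation and does not degrade regularity; and second the dependence of $W_2^2(\tilde f,\tilde g)$ on $\tilde f$, for which I would use the known adjoint-source formula for the quadratic Wasserstein misfit in terms of the optimal map. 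Concretely, I would recall that (see the $W_2$ adjoint-source formula referenced as~\eqref{eq:W2_Adj2}) the variation of $W_2^2(\tilde f,\tilde g)$ with respect to $\tilde f$ is expressed through $F^{-1}$, $G^{-1}$, and the optimal map $T = \tilde G^{-1}(\tilde F(\cdot))$; schematically it has the form of an integral of $(x - T(x))$ against the appropriate density, producing a term built from $T$ and the cumulative distributions $\tilde F,\tilde G$.

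The key steps, in order, are: (i) write $J(f) = W_2^2(P_\sigma f, P_\sigma g)$ and differentiate using the chain rule, isolating $\delta \tilde f/\delta f$ and $\delta W_2^2/\delta \tilde f$; (ii) observe that under the linear scaling~\eqref{eq:linear} the map $f\mapsto\tilde f$ is $f\mapsto (f+c)/(1+c|\Pi_2|)$, so its Fréchet derivative is multiplication by the constant $1/(1+c|\Pi_2|)$ followed by a rank-one correction from the normalizing integral — in particular it maps $C^{k,\alpha}$ to $C^{k,\alpha}$ without loss and preserves the $C^{k+1,\alpha}$ class once we know the other factor has that regularity; (iii) invoke Theorem~\ref{thm:map_smooth}: since $\tilde f,\tilde g$ are bounded below away from zero (by $c/(1+c|\Pi_2|)$) and $f,g\in C^{k,\alpha}$, the cumulative functions $\tilde F,\tilde G$ are $C^{k+1,\alpha}$ and strictly monotone, their inverses $\tilde F^{-1},\tilde G^{-1}$ are $C^{k+1,\alpha}$, and the optimal map $T=\tilde G^{-1}\circ\tilde F$ is $C^{k+1,\alpha}$; (iv) substitute these into the explicit formula for $\delta W_2^2/\delta \tilde f$ and check that every term is a composition, product, or integral of $C^{k+1,\alpha}$ functions, hence itself $C^{k+1,\alpha}$, using that $C^{k+1,\alpha}$ is closed under composition and multiplication on a bounded domain; (v) conclude that $\delta J/\delta f$ is $C^{k+1,\alpha}$, and remark that in $d\geq 2$ one argues the same way with the Brenier potential $u$ from~\eqref{eq:MAA} in place of the 1D cumulative functions, whose $C^{k+2,\alpha}$ regularity (Caffarelli) gives $T=\nabla u\in C^{k+1,\alpha}$ and hence the same conclusion.

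The main obstacle I anticipate is step (iv): pinning down the precise form of $\delta W_2^2/\delta \tilde f$ and verifying that no term loses a derivative. The subtlety is that the adjoint source for $W_2^2$ typically involves an \emph{antiderivative} of $(x-T(x))$ — i.e.\ a primitive of a $C^{k+1,\alpha}$ function, which is actually $C^{k+2,\alpha}$, so the regularity is not borderline — but one must be careful that evaluating or inverting cumulative distributions does not introduce a constant-order term that is merely continuous. A second minor point is the rank-one correction in $\delta\tilde f/\delta f$ coming from differentiating the denominator $1+c|\Pi_2|$; since $c$ and $|\Pi_2|$ are fixed constants this denominator does not actually depend on $f$, so that correction vanishes and the linearization of the normalization is simply scalar multiplication — this should be noted explicitly to avoid confusion. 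Assembling these observations, the corollary follows directly from Theorem~\ref{thm:map_smooth} with essentially no new analytic input beyond the closure properties of Hölder classes under the elementary operations involved.
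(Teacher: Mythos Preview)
Your one-dimensional argument is essentially the paper's proof: both use the chain rule to split off the smooth normalization derivative $\delta\tilde f/\delta f$, then observe that the adjoint source $H=\delta W_2^2/\delta\tilde f$ satisfies $dH/dt=2(t-T(t))$, so that $T\in C^{k+1,\alpha}$ from Theorem~\ref{thm:map_smooth} forces $H\in C^{k+2,\alpha}$ and hence $\delta J/\delta f\in C^{k+1,\alpha}$. Your remark that the antiderivative gains a derivative (so the regularity is not borderline) is exactly the mechanism.

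The gap is in step~(v) for $d\geq 2$. You write that ``one argues the same way with the Brenier potential $u$ in place of the 1D cumulative functions,'' but there is no explicit closed-form adjoint source in higher dimensions to substitute into. Knowing $T=\nabla u\in C^{k+1,\alpha}$ is necessary but not sufficient: you still need a representation of $\delta W_2^2/\delta\tilde f$ whose regularity can be read off. The paper obtains this by linearizing the Monge--Amp\`ere equation~\eqref{eq:MAA} around $u$, which yields a second-order linear elliptic operator $\Lf$ (with coefficients built from $D^2u$ and $\tilde g(\nabla u)$), and then writes
\[
H=|x-\nabla u|^2-2(\Lf^{-1})^*\bigl(\nabla\cdot((x-\nabla u)\tilde f)\bigr).
\]
The conclusion $H\in C^{k+1,\alpha}$ then requires elliptic regularity for $\Lf$ (with a separate remark for the degenerate case), not merely closure of H\"older classes under composition. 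Your plan does not mention this linearization or the appeal to elliptic theory, and without it the higher-dimensional case is not complete. If you intend the corollary only on $\Pi_2\subseteq\mathbb R$ (the trace-by-trace setting), say so explicitly and drop the $d\geq 2$ remark; otherwise you need to supply this step.
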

\begin{proof}
  By chain rule of the Fr\'{e}chet derivative, we have
\begin{equation}
  ~\label{eq:W2_Adj} I = \frac{\delta
    W_2^2(\tilde{f},\tilde{g})}{\delta f}= \frac{\delta
    W_2^2(\tilde{f},\tilde{g})}{\delta \tilde{f}} \frac{\delta
    \tilde{f}}{\delta f}.  \end{equation}

Since
  $\frac{\delta \tilde{f}}{\delta f} \in C^{\infty}$ for all scaling
  methods that satisfy~Assumption~\ref{ass:DN}, we only check the
  regularity of
\begin{equation} ~\label{eq:W2_Adj2} H = \frac{\delta
    W_2^2(\tilde{f},\tilde{g})}{\delta \tilde{f}}.  \end{equation}

  We first consider the case of $d=1$, where $H(t)$ is a 1D function
  defined on $\R$. Based on the 1D explicit
  formula~\eqref{eq:cost1D}, we can write down $H$ explicitly by
  taking the first variation~\cite{yang2017application}. An important
  observation is that
\begin{equation} ~\label{eq:W2_Adj1} \frac{dH}{dt} = 2\ (t-
  G^{-1}F(t) ) = 2(t - T(t)), \end{equation}
where $T$ is the optimal map between
  $\tilde{f}$ and $\tilde{g}$. By Theorem~\ref{thm:map_smooth},
  $T\in C^{k+1,\alpha}$, and thus $H\in C^{k+2,\alpha}$, which proves
  that $I$ in~\eqref{eq:W2_Adj2} is at least $C^{k+1,\alpha}$.

  Next we discuss the case of $d\geq 2$. One can linearize the \MA
  equation in~\eqref{eq:MAA} and obtain a second-order linear elliptic
  PDE of $\psi$~\cite{engquist2016optimal}:
\begin{equation} \label{eq:du}
  \begin{cases}
    \tilde{g}(\nabla u)\tr\left((D^2u)_{adj}D^2\psi\right) + \det(D^2 u)\nabla \tilde{g}(\nabla u)\cdot \nabla\psi = \df,\\
    \nabla \psi \cdot \mathbf{n} = 0\quad \text{on}\ \partial {\Pi_2},
  \end{cases}
\end{equation}
where $A_\text{adj} = \det(A)A^{-1}$ is the adjugate of matrix
  $A$, $u$ is the solution to~\eqref{eq:MAA}, and $\df$ is the
  \textit{mean-zero} perturbation to $\tilde{f}$. If we
  denote~\eqref{eq:du} as a linear operator $\Lf$ where
  $\Lf \psi = \df$, $H$ in~\eqref{eq:W2_Adj2} becomes
  \[
    H = \abs{x-\nabla u}^2-2(\Lf^{-1})^*(\nabla\cdot(
        (x-\nabla u)\tilde{f})).
  \]
  In Theorem~\ref{thm:map_smooth}, we have already shown that
  $u\in C^{k+2,\alpha}$. The right-hand side of the elliptic operator,
  $\nabla\cdot( (x-\nabla u)\tilde{f})$, is then in
  $C^{k,\alpha}$. As a result, $H$ is $C^{k+2,\alpha}$ if $\Lf$ is not
  degenerate, and $C^{k+1,\alpha}$ if it is
  degenerate~\cite{ERY2019}. Therefore, the Fr\'{e}chet derivative of
  $W_2(\tilde{f},\tilde{g})$ with respect to $f$ is at least
  $C^{k+1,\alpha}$ for $d\geq 1$.
\end{proof}

Theorem~\ref{thm:map_smooth} and Corollary~\ref{thm:gradient_smooth}
demonstrate the smoothing effects achieved by adding a positive $c$
in~\eqref{ass:DN}. Nevertheless, as we have shown in~\cite{ERY2019},
the ``smoothing'' property plays a much more significant role in using
the quadratic Wasserstein metric as a measure of data discrepancy in
computational solutions of inverse problems.  We have characterized
in~\cite{ERY2019}, analytically and numerically, many principal
benefits of the $W_2$ metric, such as the insensitivity to noise
(Theorem~\ref{thm:noise}) and the convex optimization landscape
(Theorem~\ref{thm: biconvex}), can be explained by the intrinsic
smoothing effects of the quadratic Wasserstein metric. In
Section~\ref{sec:three-layer}, we will see how the smoothing property,
which is related to the frequency bias of the $W_2$ metric, keeps
tackling the challenging inversion scenarios that are noise-free and
beyond the scope of local-minima trapping.

\section{Model Recovery Below the Reflectors} \label{sec:three-layer}
Subsurface velocities are often discontinuous, which arises naturally
due to the material properties of the underlying physical system. Due
to data acquisition limitations, seismic reflections are often the
only reliable information to interpret the geophysical properties in
deeper areas. Inspired by the realistic problem with salt inclusion,
we create a particular layered model whose velocity only varies
vertically. Despite its simple structure, it is notably challenging
for conventional methods to invert with reflections. No seismic waves
return to the surface from below the reflecting layer. Nevertheless,
we will see that partial inversion for velocity below the reflecting
interface is still possible by using $W_2$ as the objective function
in this PDE-constrained optimization.

\begin{figure}[h]
  \centering \subfloat[True velocity]{\includegraphics[width
    =0.48\textwidth]{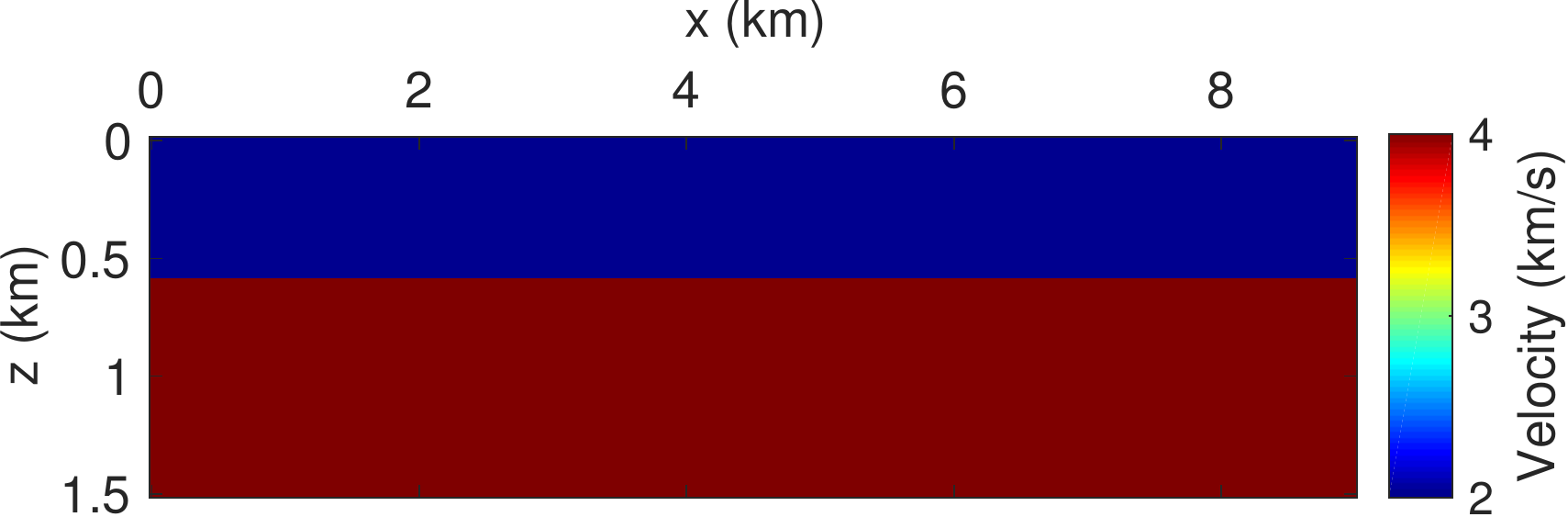}\label{fig:test1_vG}}\quad 
  \subfloat[Initial velocity]{\includegraphics[width =0.48\textwidth]{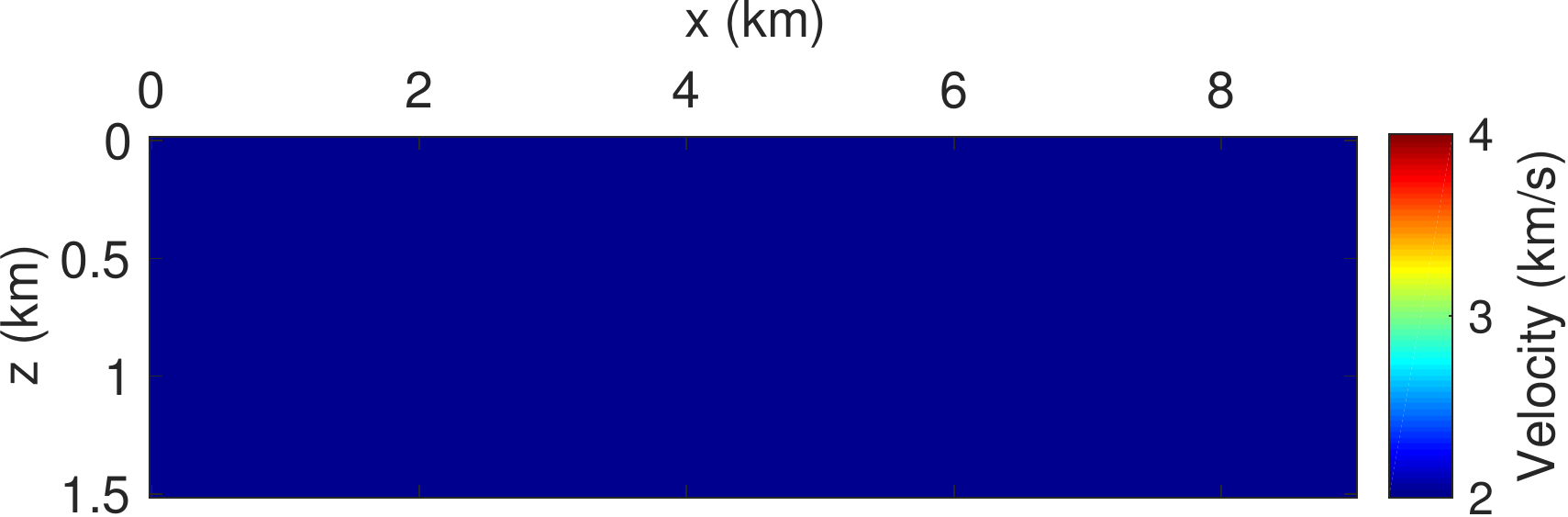}\label{fig:test1_vF}}   \\
  \subfloat[$L^2$-based inversion]{\includegraphics[width
    =0.48\textwidth]{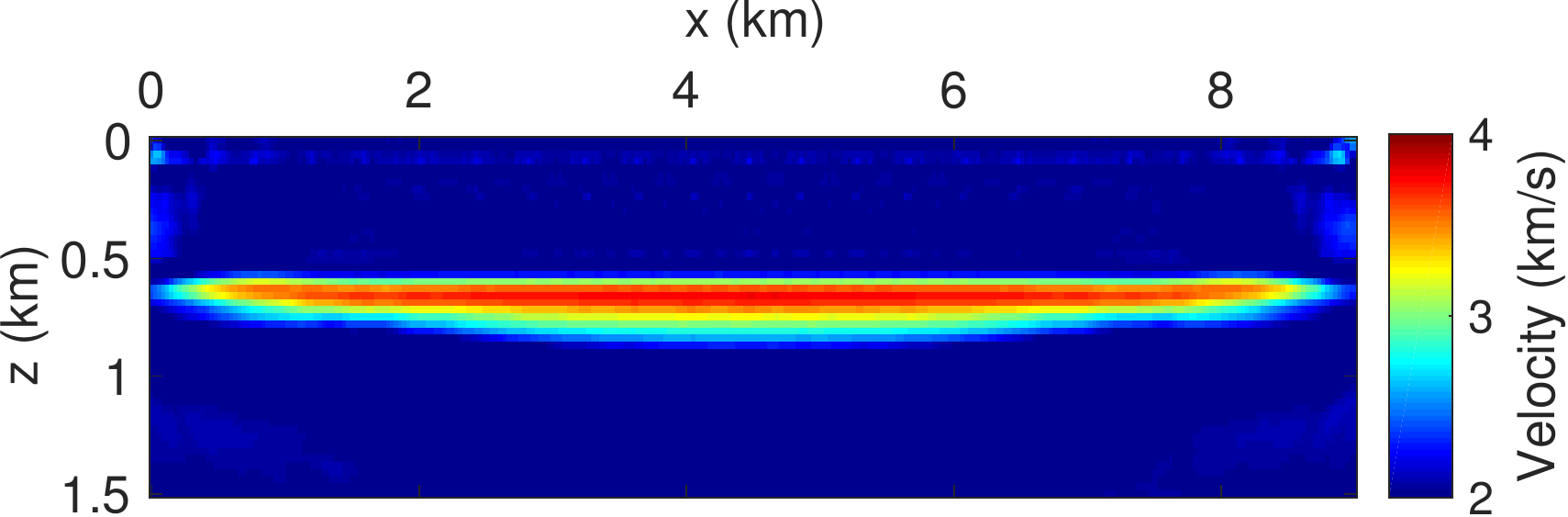}\label{fig:test1_L2}}\quad 
  \subfloat[$W_2$-based inversion]{\includegraphics[width
    =0.48\textwidth]{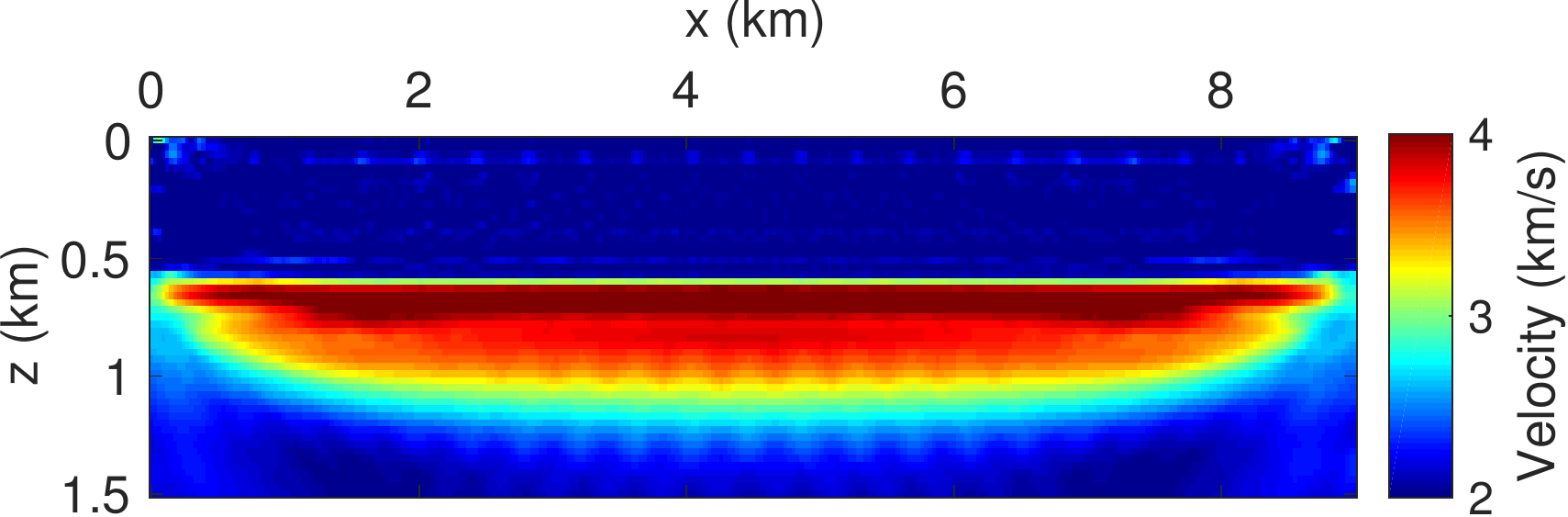}\label{fig:test1_W2}}
  \caption{The layered model: (a) true velocity, (b) initial velocity,
    (c)~$L^2$-based velocity inversion, and (d)~$W_2$-based velocity
    inversion.} \label{fig:test1_vel2D}
\end{figure}

\subsection{The Layered Example}
The target velocity model we aim to reconstruct contains two layers of
homogenous constant velocity; see Figure~\ref{fig:test1_vG}. The
second layer with wave speed 4 km/s is unknown to the initial model
(Figure~\ref{fig:test1_vF}). The wave source in the test is a Ricker
wavelet with a peak frequency of 5 Hz. There are 23 sources and 301
receivers on top in the first layer with wave speed 2 km/s. The total
recording time is 3 seconds. There is naturally no back-scattered
information from the interior of the second layer returning to the
receivers. Due to both the physical and numerical boundary
conditions~\cite{engquist1977absorbing}, reflections from the
interface are the only information for the reconstruction.

The numerical inversion is solved iteratively as an optimization
problem. Both experiments are manually stopped after 260
iterations. Figure~\ref{fig:test1_L2} shows the final result using the
$L^2$ norm. The vertical velocity changes so slowly that it does not
give much more information than indicating the location of the
interface. Nevertheless, the $W_2$ result in Figure~\ref{fig:test1_W2}
gradually recovers not only the layer interface but also the majority
of the sublayer velocity.

\begin{figure}[tb]
  \subfloat[Objective functions $J_{L^2}$ and
  $J_{W_2}$]{\includegraphics[width=0.45\textwidth]{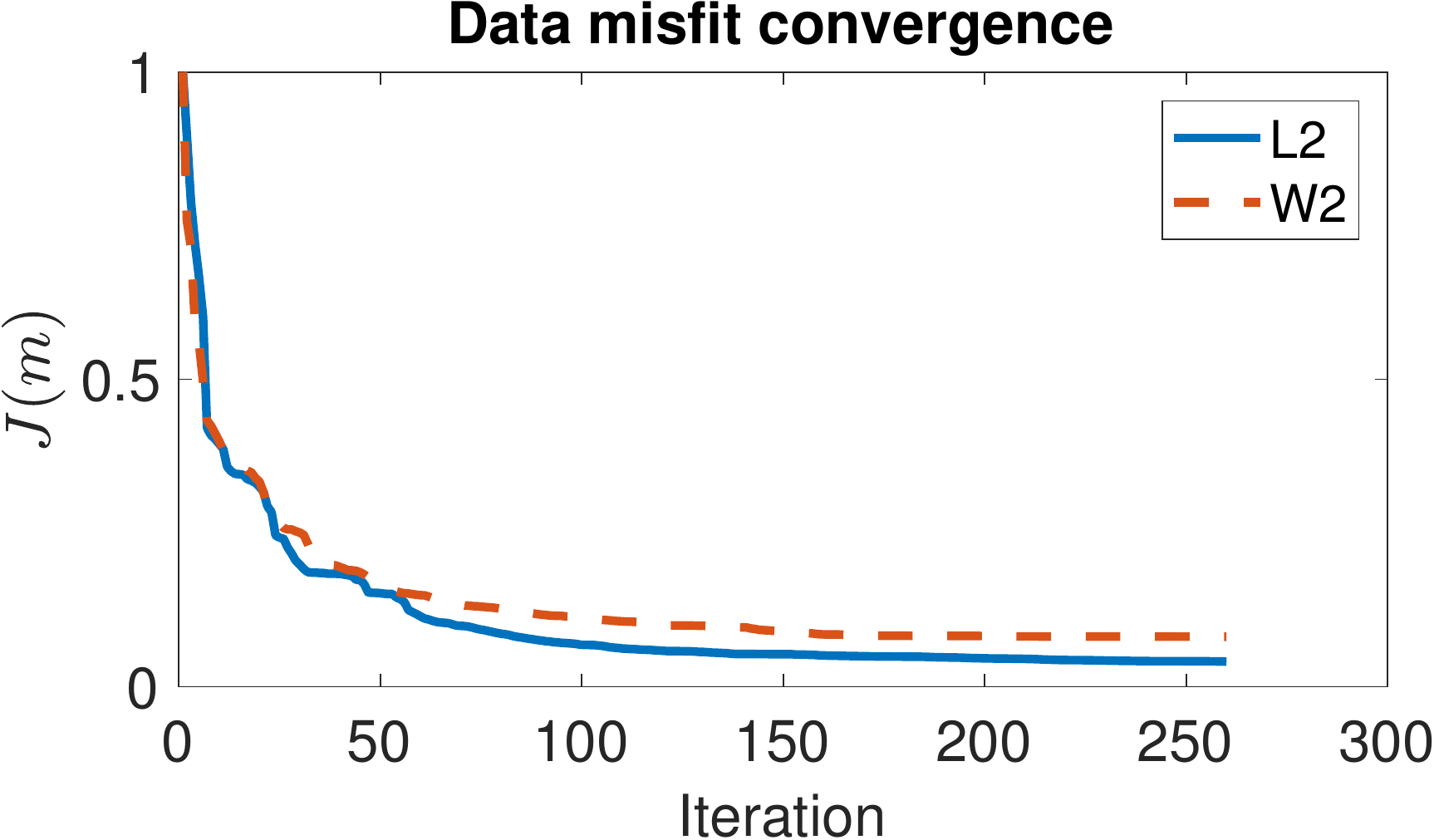}\label{fig:CPAM_data_conv}}\qquad 
  \subfloat[Model error
  $\|m_\text{iter}-m_*\|_F$]{\includegraphics[width=0.45\textwidth]{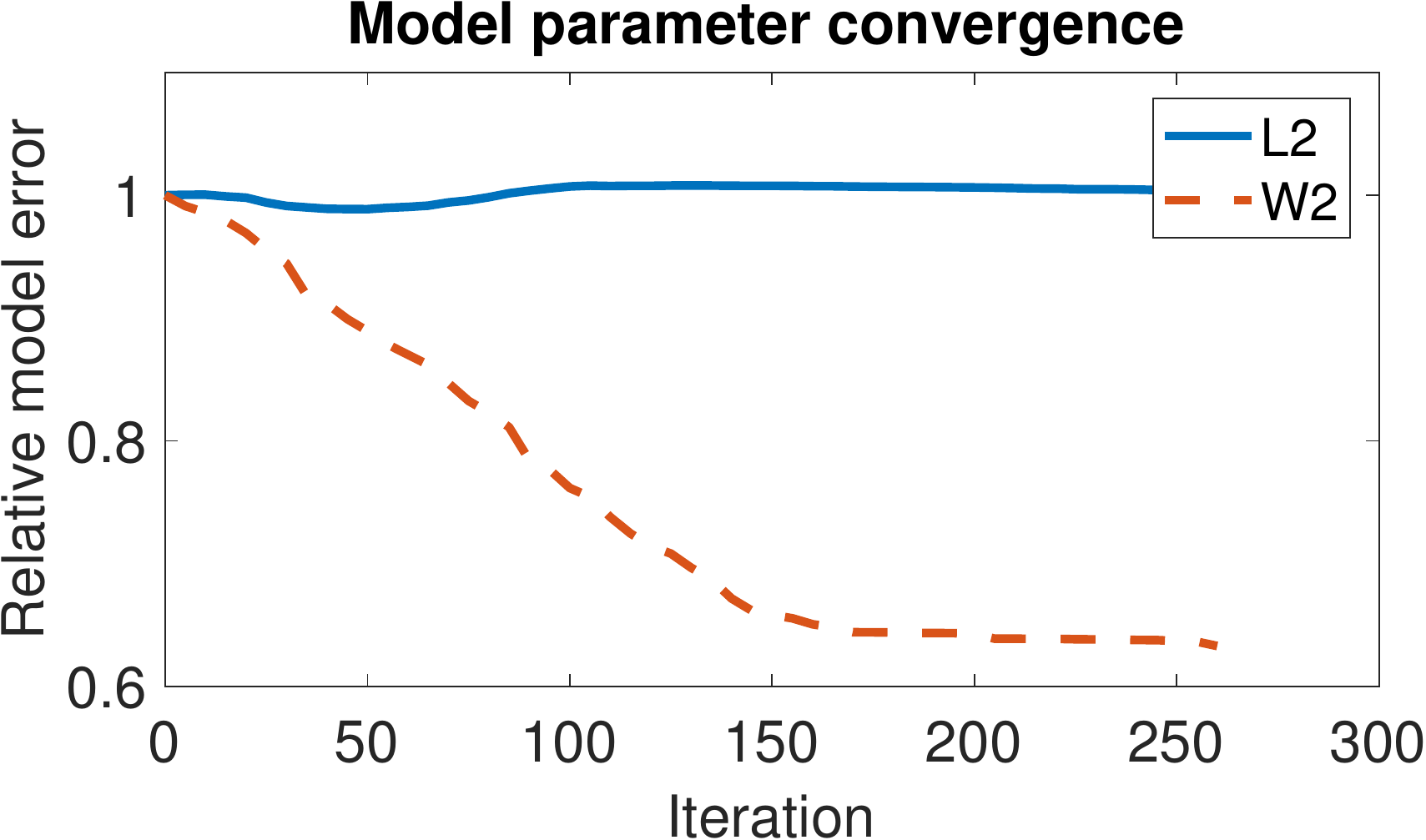}\label{fig:CPAM_model_conv}}
  \caption{(a) Normalized data misfit convergence curve based on the
    objective function and (b) normalized $2$-norm of the model
    Error.}~\label{fig:errors}
\end{figure}

\subsection{Tackling Issues Beyond Local Minima}
In Figure~\ref{fig:CPAM_data_conv}, both the normalized $L^2$ norm and
$W_2$ distance are reduced from~1 to nearly 0. The plots indicate that
$L^2$-based inversion \textit{does not suffer from local minima} in
this layered example. The velocity is initiated correctly above the
interface. In Figure~\ref{fig:CPAM_model_conv}, $L^2$-based inversion
has a model convergence curve that is radically different from its
data convergence in Figure~\ref{fig:CPAM_data_conv}. The normalized
model error, measured by the Frobenius norm of $m_\text{iter}-m_*$
where $m_\text{iter}$ is the reconstruction at the current iteration
and $m_*$ is the truth, remains unchanged. On the other hand, the
$W_2$-based inversion has both the $W_2$ distance and the model error
decreasing rapidly in the first 150 iterations.  Since the
computational cost per iteration is the same in both cases by
computing the $W_2$ distance explicitly in
1D~\cite{yang2017application}, the $W_2$-based inversion lowers the
model error much more quickly. Both the inversion results in
Figure~\ref{fig:test1_vel2D} and the convergence curves
in~Figure~\ref{fig:errors} illustrate that the $W_2$-based inversion
gives a better reconstruction. Other features of $W_2$ other than the
convexity for translations and dilations (Theorem~\ref{thm: biconvex})
play 
critical roles in this velocity inversion.

At first glance, it seems puzzling that $W_2$-based FWI can even
recover velocity in the model where no seismic wave goes through. The
fact that there is no reflection from below the known interface in the
measured data is, of course, also informative. After analyzing the
layered example more carefully, we have summarized two essential
properties of the quadratic Wasserstein metric that contribute to the
better inversion result, \textit{small-amplitude sensitivity} and
\textit{frequency bias}.

\begin{figure}[h]
  \centering \subfloat[]{\includegraphics[width =
    0.45\textwidth]{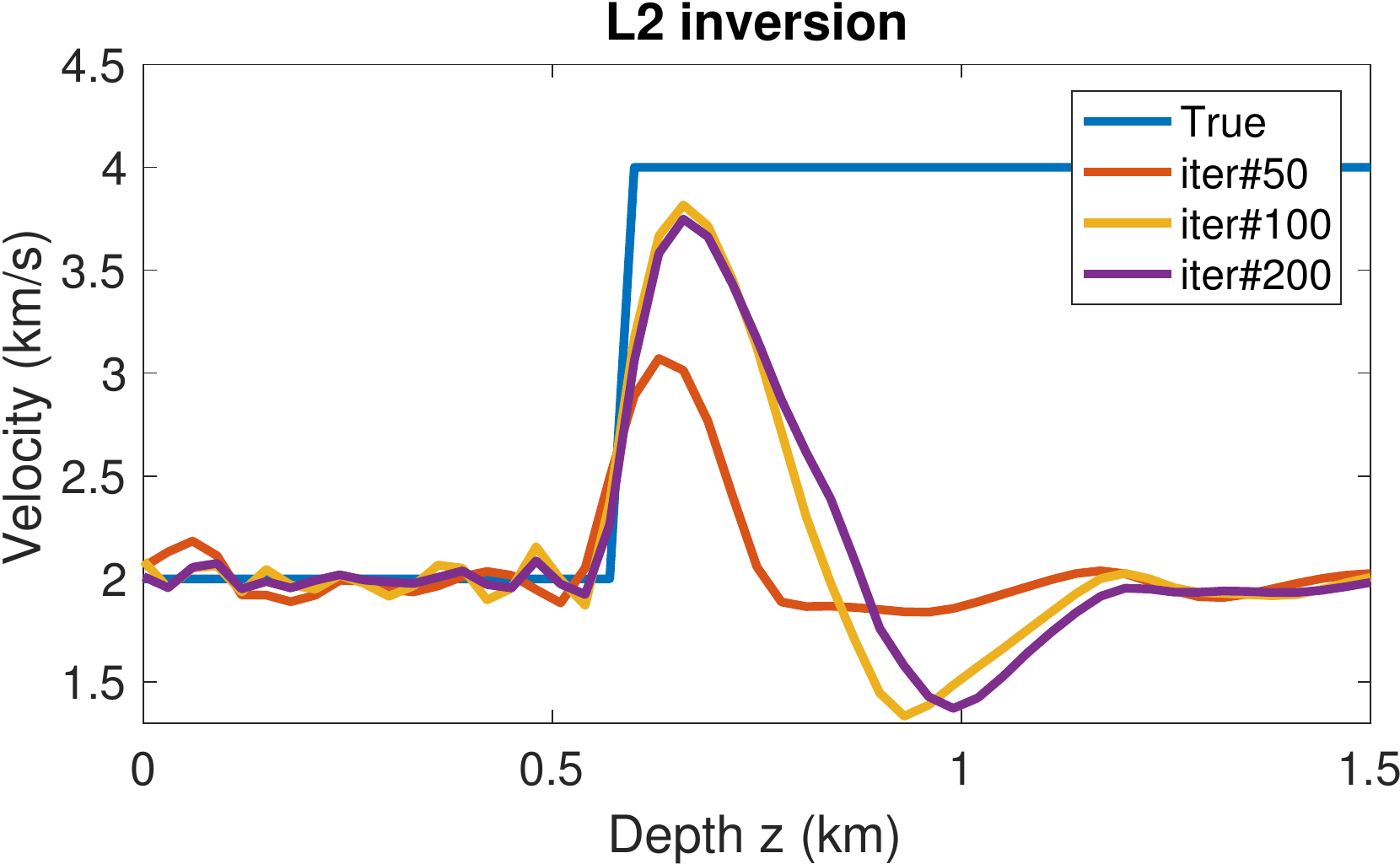}\label{fig:test1_L2_iter}}
  \subfloat[]{\includegraphics[width =
    0.45\textwidth]{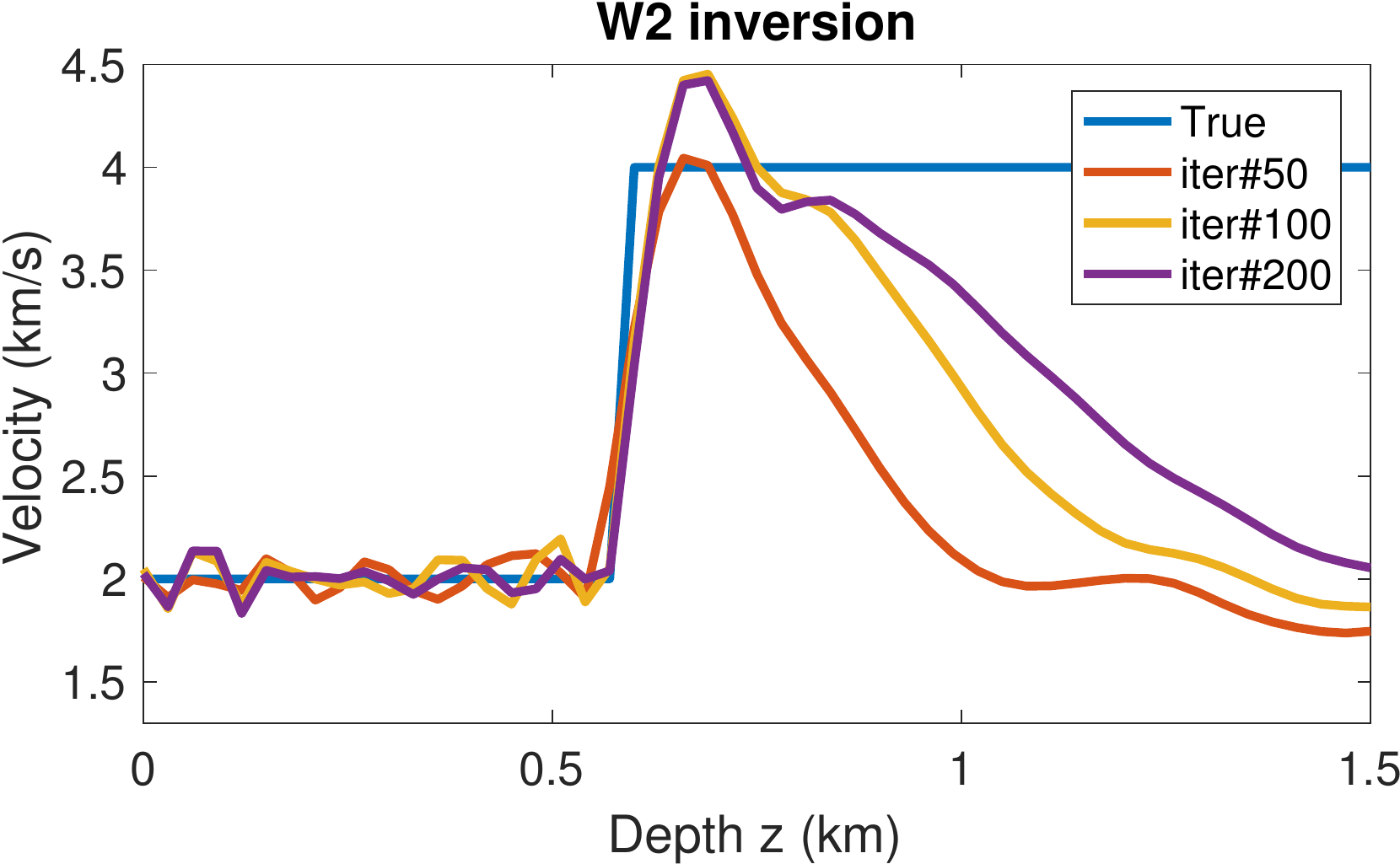}\label{fig:test1_W2_iter}}
  \caption{The layered model: (a)~$L^2$ and (b) $W_2$ inversion
    velocity after 50, 100, and 200 iterations.}\label{fig:test1_iter}
\end{figure}

\begin{figure}[h]
  \centering
  \includegraphics[width=0.93\textwidth]{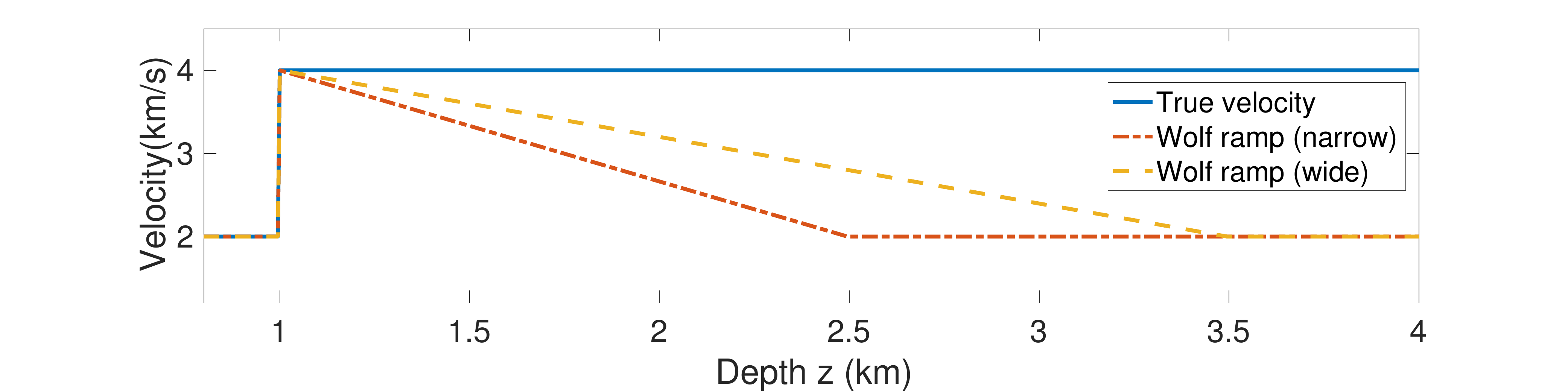} \caption{Analogy
    for the true two-layer velocity (solid line), the narrower
    (dash-dot line), and the wider Wolf ramps (dashed
    line).}\label{fig:Wolf-vel}
\end{figure}

\subsection{Small-Amplitude Sensitivity} \label{sec:SmallAmplitude}
Figure~\ref{fig:test1_L2_iter} and Figure~\ref{fig:test1_W2_iter} are
the vertical velocity profiles of $L^2$-based and $W_2$-based FWI at
the 50th, 100th, and 200th iterations. The inverted velocity profile at
the 50th iteration has a gradual transition from 4 km/s back to 2 km/s
around $z=1$ km. The simulated reflector is present in the earlier
iterations of both $L^2$- and $W_2$-based inversion. It is often
referred to as overshoot. However, this simulated reflector is the key
information to uncover the velocity model below the interface.

Although the inverted velocity at 50th iteration does not have the
discontinuity at $z=0.6$ km as in the true velocity, there is
a \textit{ frequency-dependent reflectivity} caused by the linear
velocity transition zone from $z=0.5$ km to $z=1$ km. In 1937 Alfred
Wolf~\cite{wolf1937reflection} first analyzed this particular type of
reflectivity dispersion. We extract the linear transition zones and
create the analogous versions in Figure~\ref{fig:Wolf-vel}. The solid
plot in Figure~\ref{fig:Wolf-vel} represents our target model, while
the dashed plots are two types of Wolf ramps, similar to the velocity
profiles at the 50th and the 100th iteration in
Figure~\ref{fig:test1_iter}.

All three velocity models in Figure~\ref{fig:Wolf-vel} produce strong
reflections of the same phase due to the jump in velocity at $z=1$
km. However, the energy reflected is relatively smaller from the ones
with Wolf ramps. In addition to that, the linear transition zone
generates another reflection which has extremely small energy. If the
amplitude of the difference in reflection is $\varepsilon$, the $L^2$
misfit is $\bO(\varepsilon^2)$ while the $W_2$ misfit is
$\bO(\varepsilon)$. Since the reflection amplitude
$\varepsilon \ll 1$, the $W_2$ metric measures the misfit
$\bO(\varepsilon) \gg \bO(\varepsilon^2)$. Consequently, $W_2$-based
inversion can correct the velocity model furthermore based on the
relatively bigger residual.

\begin{figure}
  \centering \subfloat[Reflections from the narrow Wolf ramp]
  {\includegraphics[width=0.46\textwidth]{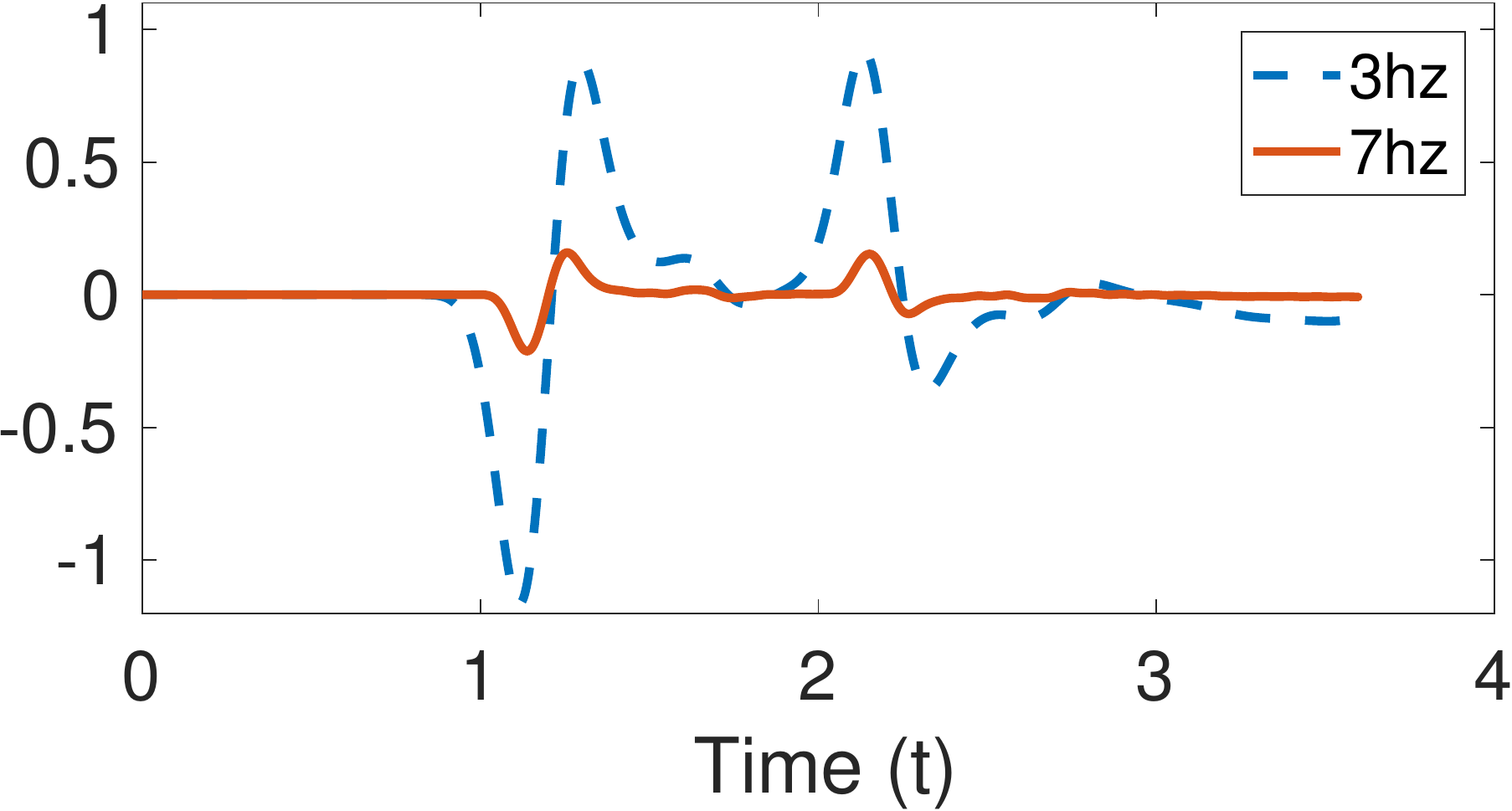}\label{fig:Wolf-wide}}\quad 
  \subfloat[Reflections from the wide Wolf
  ramp]{\includegraphics[width=0.46\textwidth]{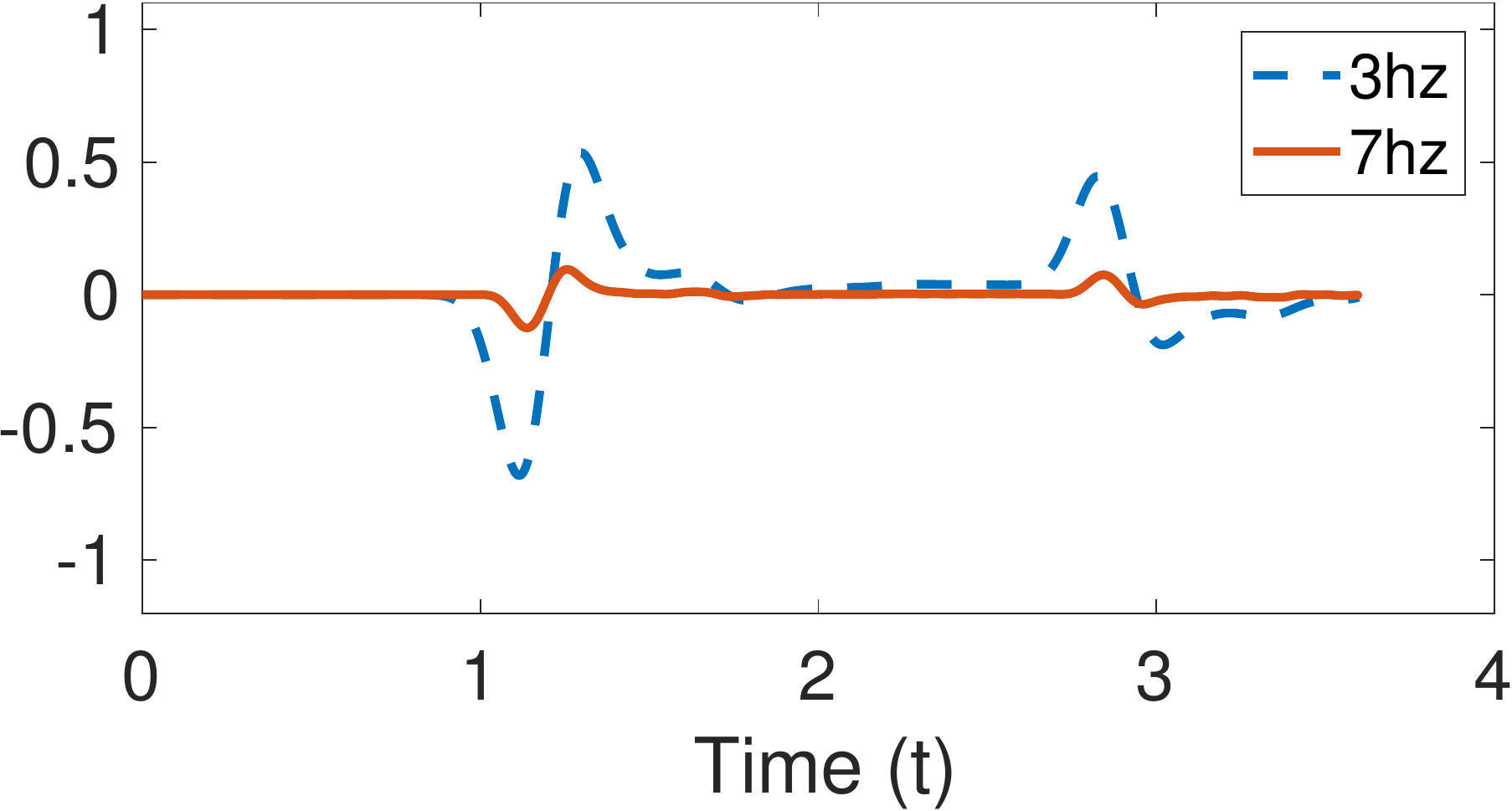}\label{fig:Wolf-narrow}}
  \caption{Reflections produced by the 3 Hz and the 7 Hz Ricker wavelets
    from the (a) narrow and (b) wide Wolf ramps in
    Figure~\ref{fig:Wolf-vel}.}~\label{fig:Wolf-data}
   \vspace{-7ex}
\end{figure}

\subsection{Frequency bias}
As illustrated in~\cite[fig.~2]{liner2010wolf}, the amplitude of the
Wolf ramp reflection coefficient at the normal incident is bigger for
lower frequencies and smaller for higher frequencies. This is also
observed in Figure~\ref{fig:Wolf-data}. The difference between the
true data reflection and the one from Wolf ramps is more significant
for 3-Hz data than the 7-Hz one. When the simulated data and the
observed data are sufficiently close to each other, inverse matching
with the quadratic Wasserstein metric can be viewed as the weighted
$\dot H^{-1}$ seminorm~\cite{peyre2018comparison,ERY2019}, which has a
$1/\mathbf{k}$ weighting on the data spectrum with $\mathbf{k}$
representing the wavenumber. As a result, the $W_2$ objective function
``sees'' more of the stronger low-frequency reflections caused by the
Wolf ramp than the $L^2$ norm. Based on its better sensitivity to the
low-frequency modes, the $W_2$-based inversion can keep updating the
velocity model and reconstruct the second layer
in~Figure~\ref{fig:test1_vG} by minimizing the ``seen'' data misfit.

\begin{figure}
  \centering
  \subfloat[$\bigl\|m^\text{low}_\text{iter}-m^\text{low}_*\bigr\|_F$]{\includegraphics[width=0.45\textwidth]{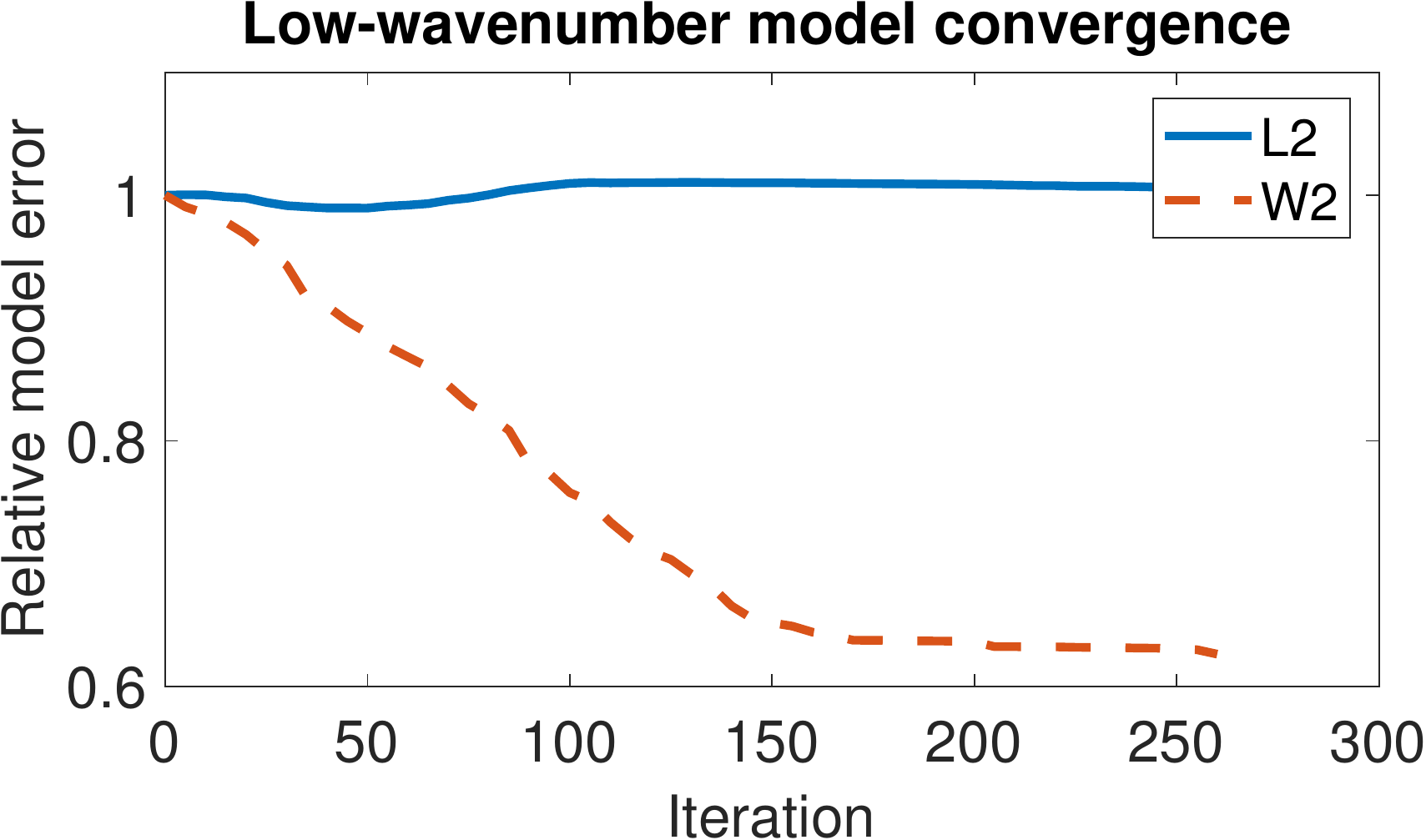}\label{fig:low_model_conv}}\qquad 
  \subfloat[$\bigl\|m^\text{high}_\text{iter}-m^\text{high}_*\bigr\|_F$]{\includegraphics[width=0.45\textwidth]{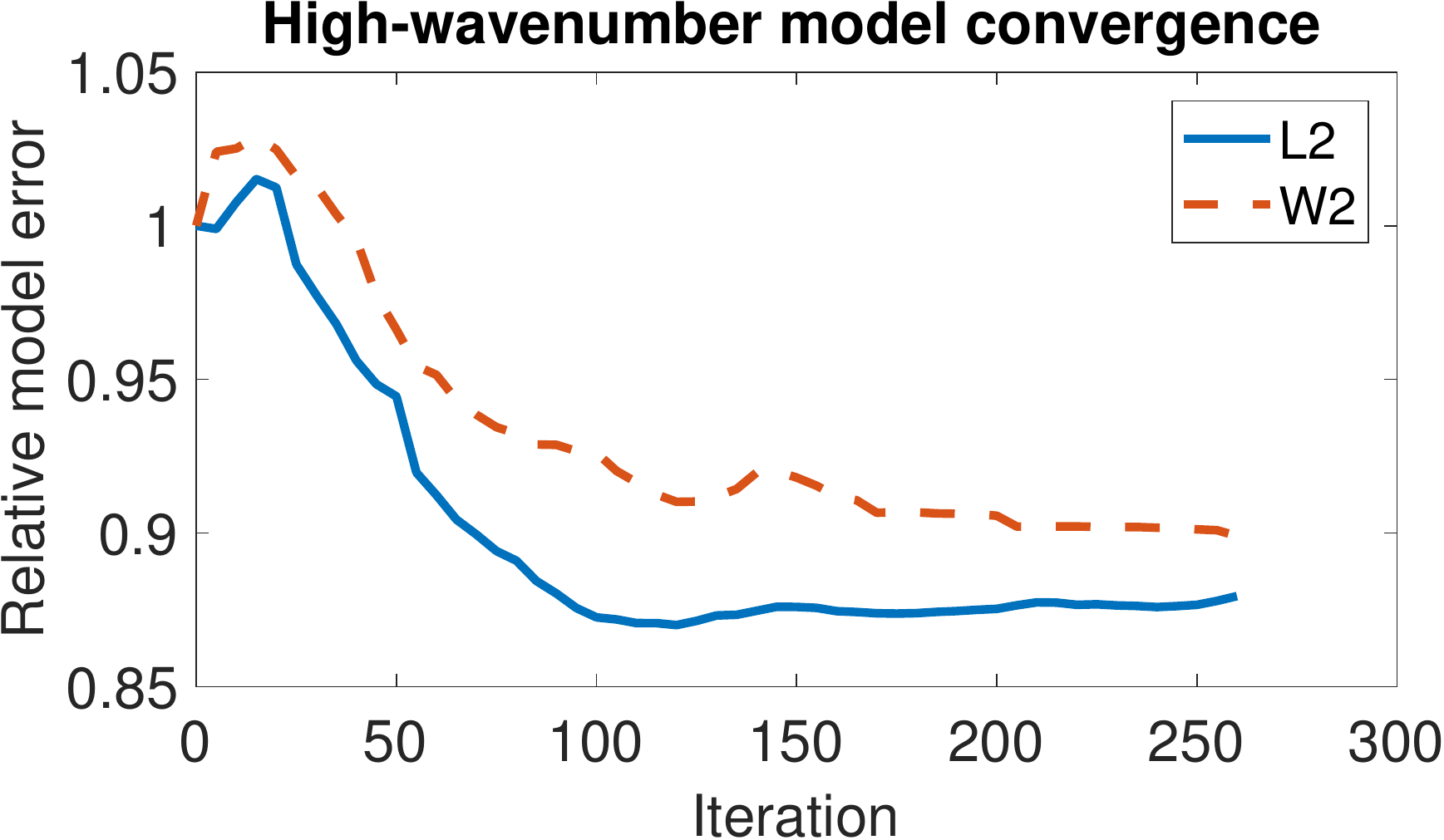}\label{fig:high_model_conv}}\\
  \subfloat[$\bigl\|f^\text{low}_\text{iter}-g^\text{low}\bigr\|_2$]{\includegraphics[width=0.45\textwidth]{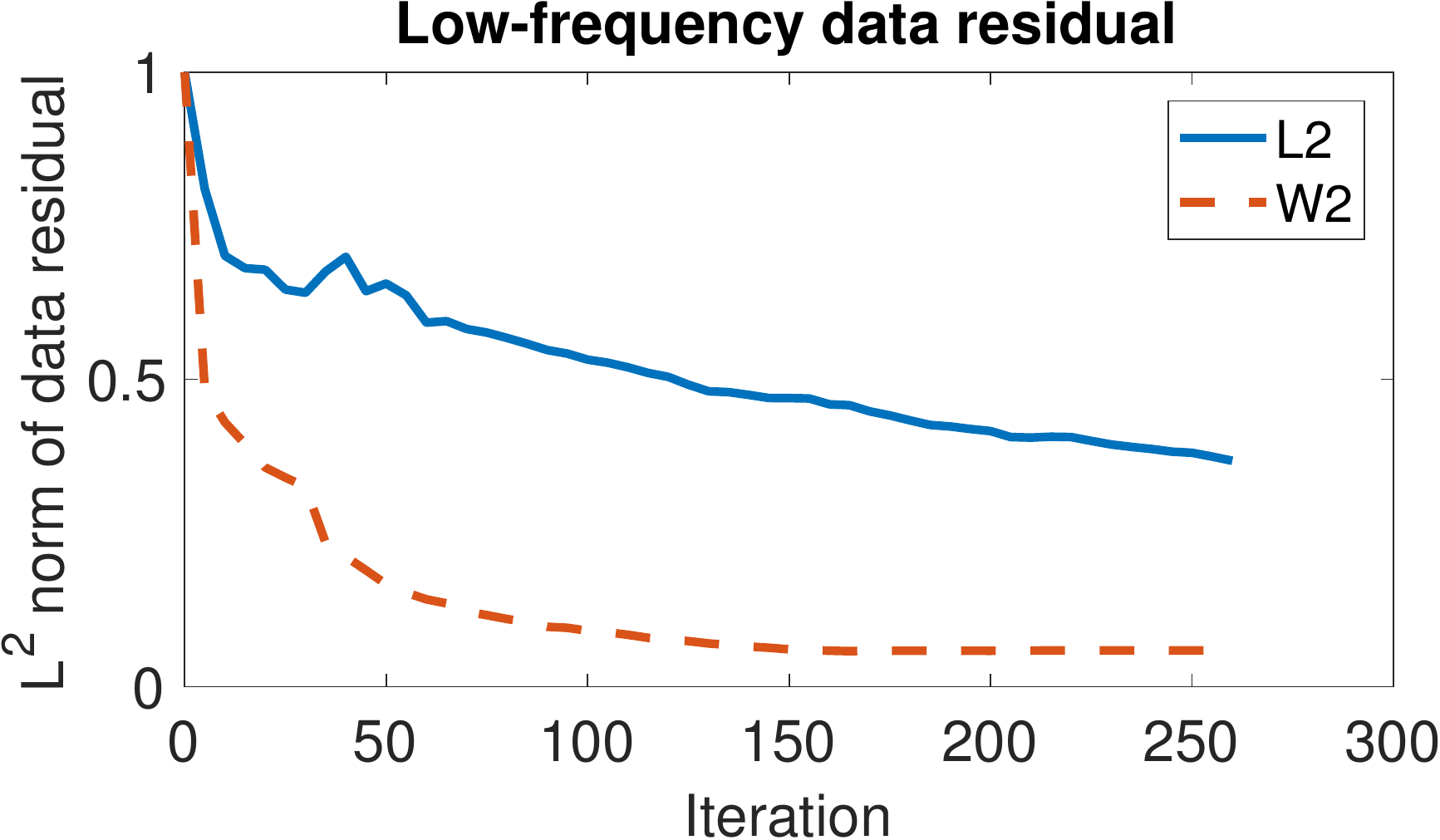}\label{fig:low_data_res}}\qquad 
  \subfloat[$\bigl\|f^\text{high}_\text{iter}-g^\text{high}\bigr\|_2$]{\includegraphics[width=0.45\textwidth]{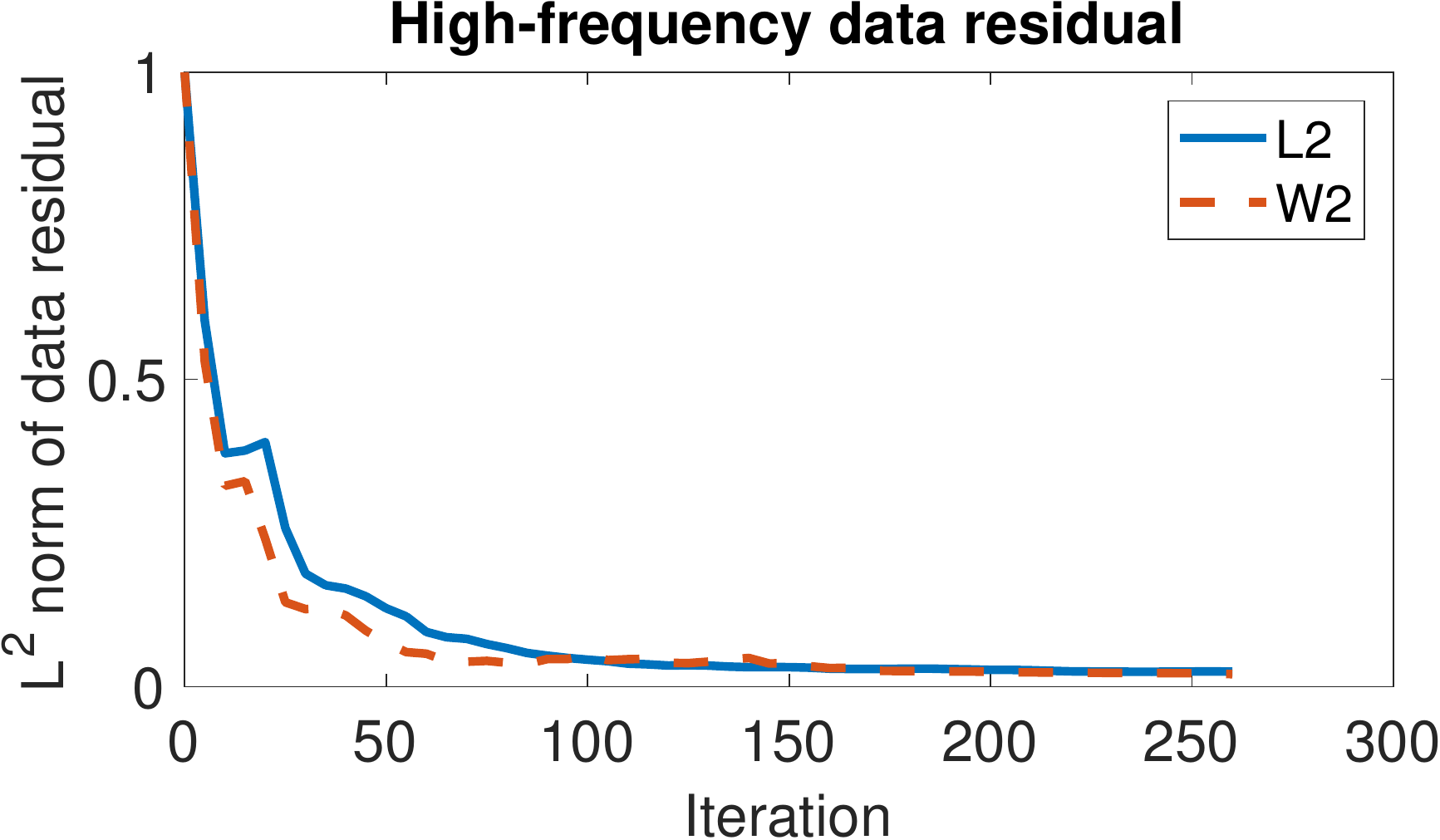}\label{fig:high_data_res}}
  \caption{(a) Low-wavenumber and (b) high-wavenumber model
    convergence; (c) low-frequency and (d) high-frequency $2$-norm
    data residual for $L^2$-based and $W_2$-based
    inversion.}~\label{fig:residual}
\end{figure}

\vfill
\begin{figure}
  \centering \subfloat[$L^2$-FWI at iteration
  0]{\includegraphics[width=0.3\textwidth]{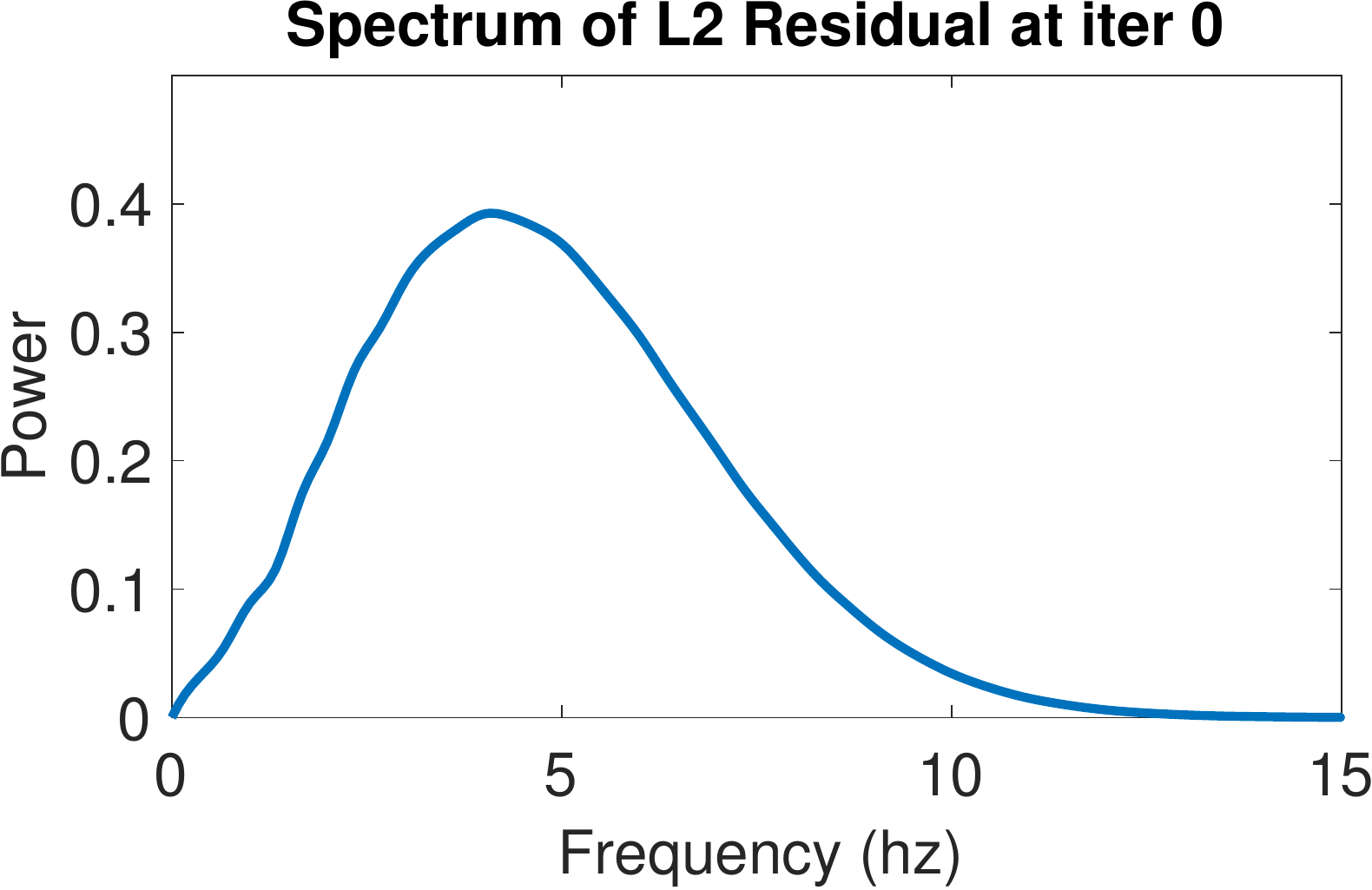}\label{fig:refl_L2_iter1_fft}}\quad 
  \subfloat[$L^2$-FWI at iteration
  100]{\includegraphics[width=0.3\textwidth]{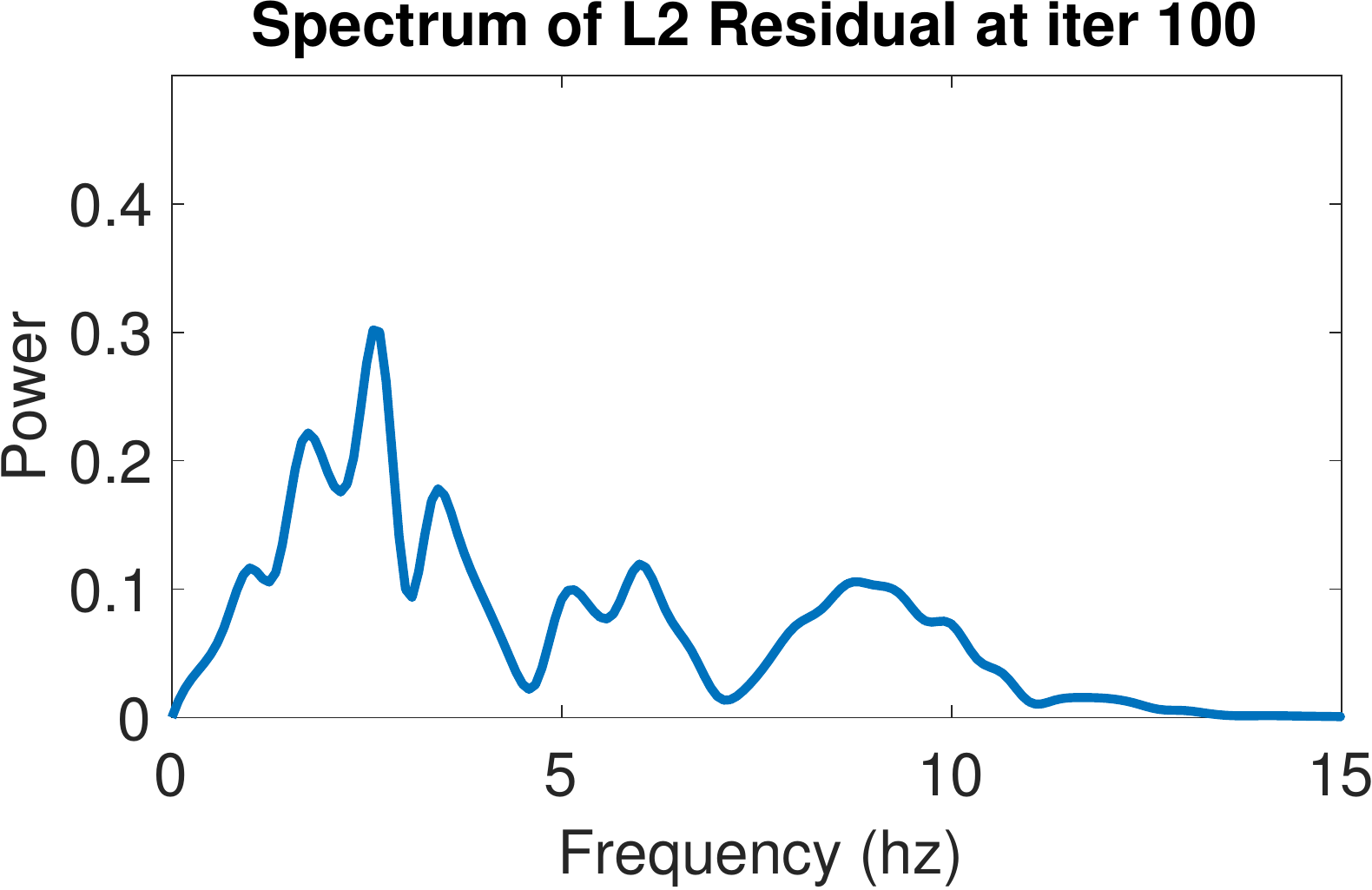}\label{fig:refl_L2_iter51_fft}}\quad 
  \subfloat[$L^2$-FWI at iteration 200]{\includegraphics[width=0.3\textwidth]{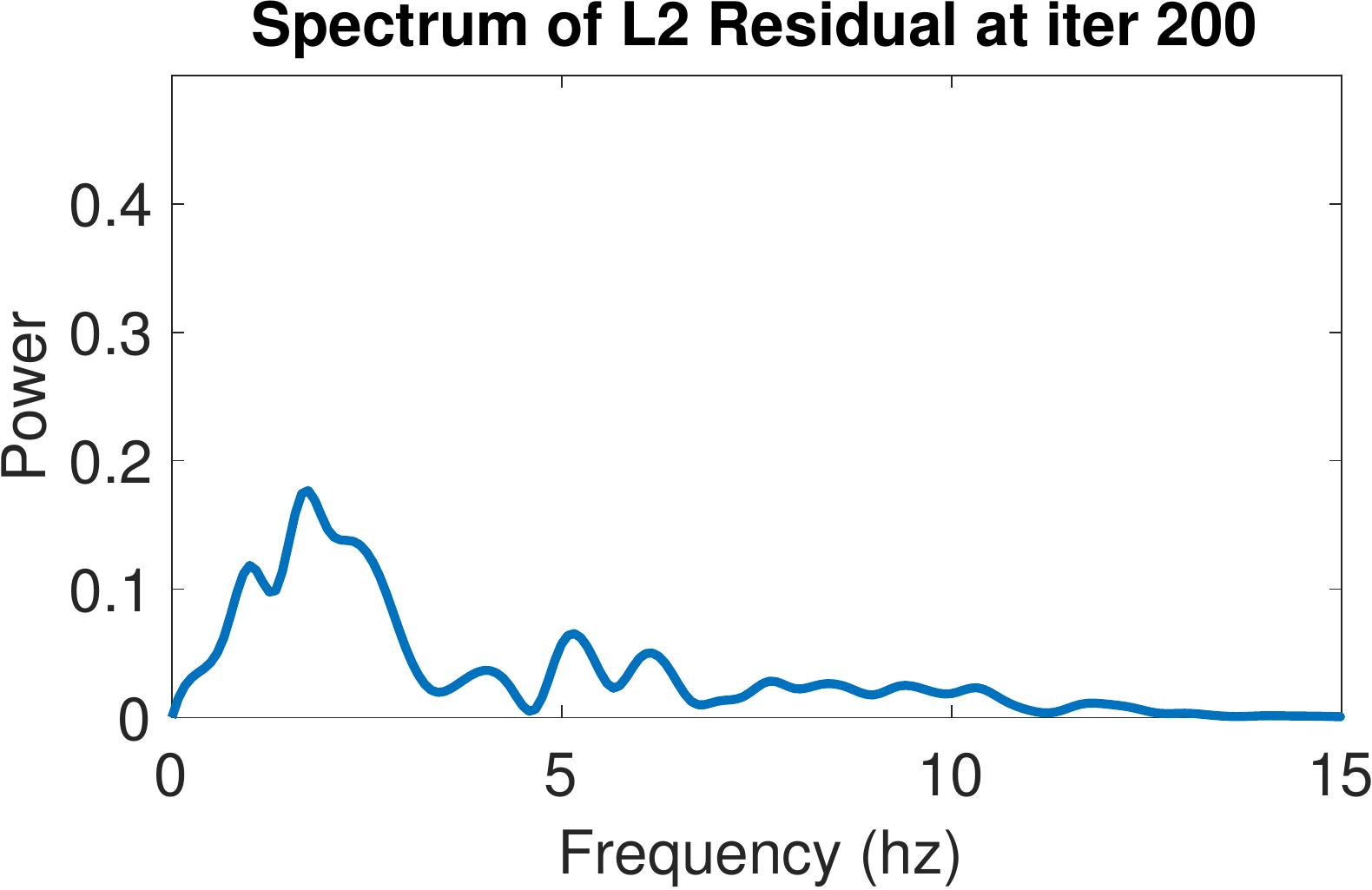}\label{fig:refl_L2_iter101_fft}}\\
  \subfloat[$W_2$-FWI at iteration
  0]{\includegraphics[width=0.3\textwidth]{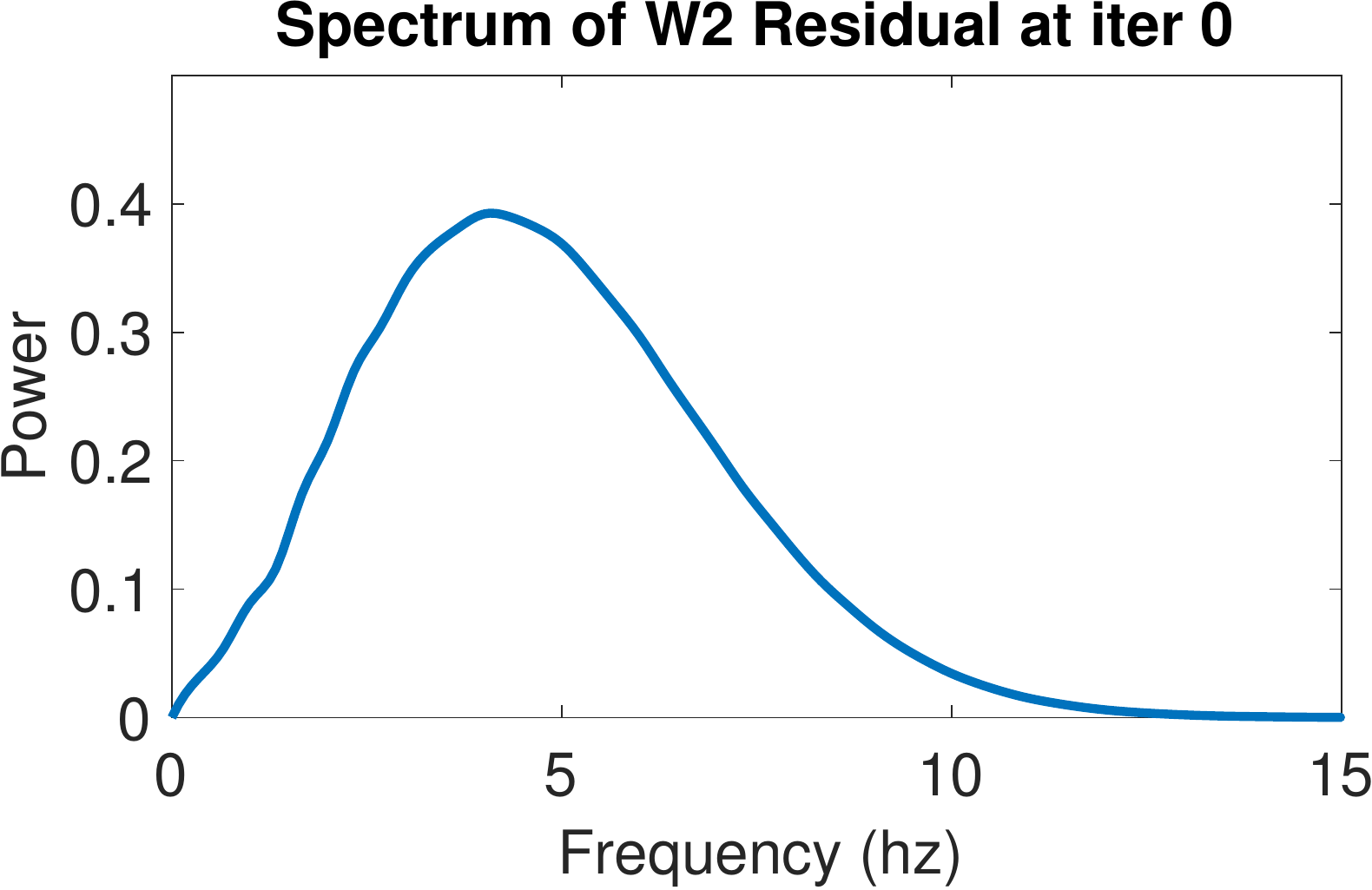}\label{fig:refl_W2_iter1_fft}}\quad 
  \subfloat[$W_2$-FWI at iteration
  100]{\includegraphics[width=0.3\textwidth]{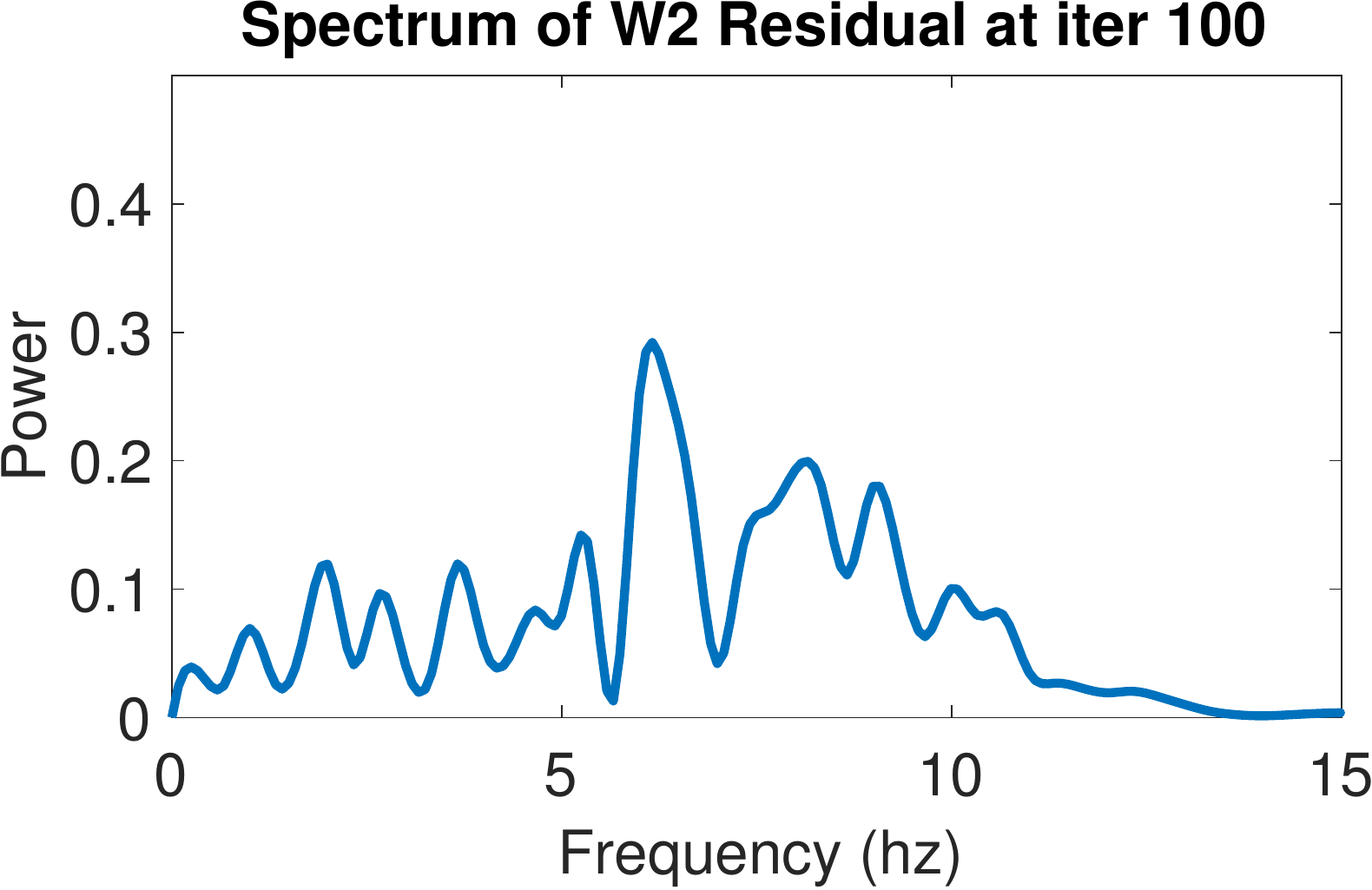}\label{fig:refl_W2_iter51_fft}}\quad 
  \subfloat[$W_2$-FWI at iteration
  200]{\includegraphics[width=0.3\textwidth]{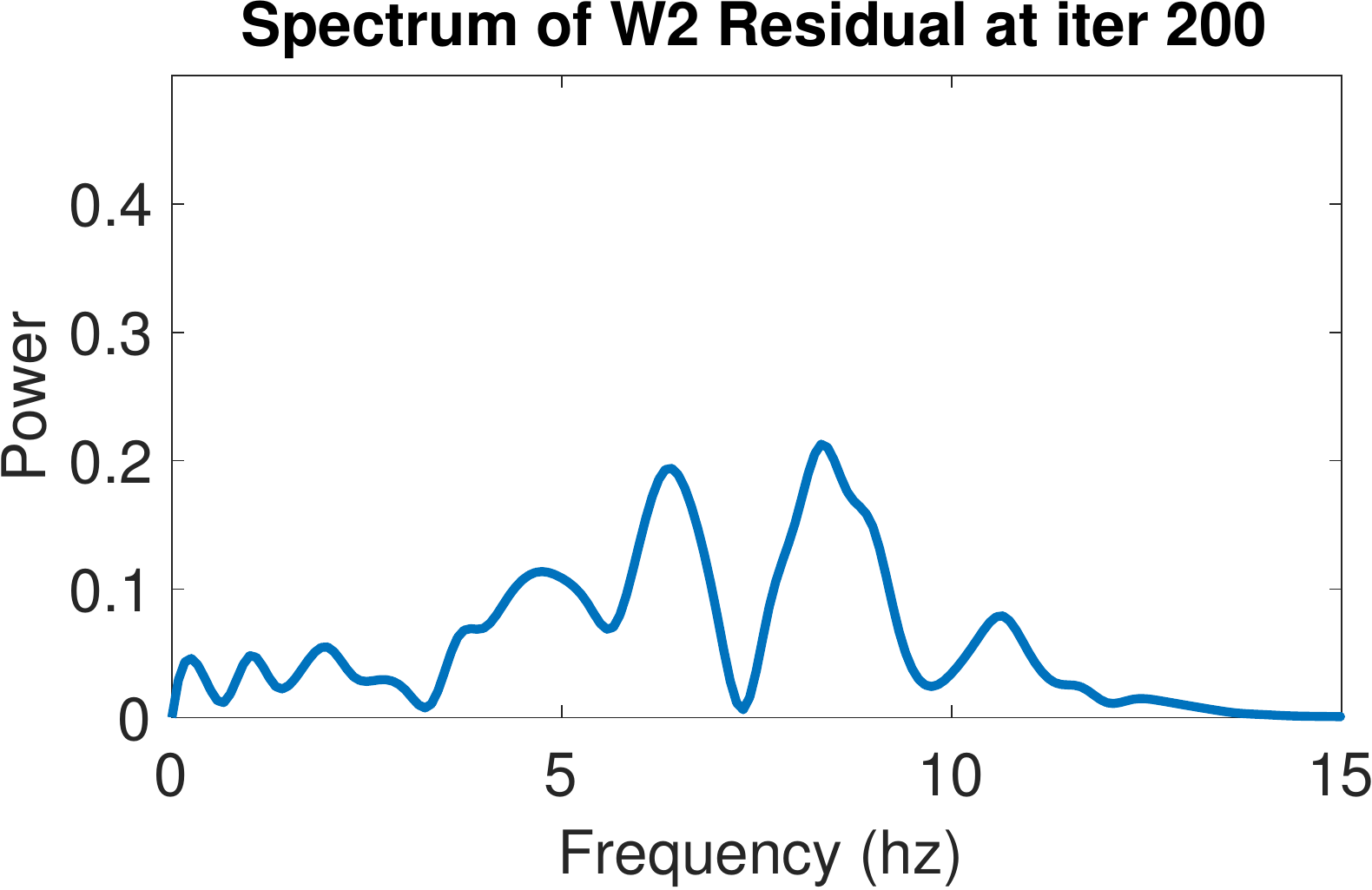}\label{fig:refl_W2_iter101_fft}}
  \caption{Three-layer model: the residual spectrum for $L^2$-based
    and $W_2$-based inversion at iteration 0, 100, and
    200.}~\label{fig:refl_res_fft}
\end{figure}

In Figure~\ref{fig:CPAM_model_conv}, we observe that the $2$-norm of
the model error in $L^2$-based inversion barely changes. We decompose
all velocity models in the experiment into low-wavenumber (with
Fourier modes $|\mathbf{k}|\leq30$) and high-wavenumber (with Fourier
modes $|\mathbf{k}|>30$ ) parts by bandpass filters. We are interested
in the model error decay of each part in the inversion.  Similarly, we
divide the data residual of both inversions into the low- and
high-frequency parts. The residual is computed as the difference
between synthetic data $f(\mathbf{x},t;m)$ generated by the model $m$
at current iteration and the true data $g(\mathbf{x},t)$:
\[
  \text{residual} = f(\mathbf{x},t;m) - g(\mathbf{x},t).
\]

The differences between the two objective functions are more clear in
Figure~\ref{fig:residual}, which illustrates different convergence
patterns for the smooth and oscillatory parts of both the model and
the data. The low-wavenumber model error
(Figure~\ref{fig:low_model_conv}) and the low-frequency data residual
(Figure~\ref{fig:low_data_res}) of the $W_2$-based inversion decreases
much more rapidly than the $L^2$-based inversion, while the latter
shows sensitivity in reducing the high-wavenumber model error and
high-frequency residuals, based on Figure~\ref{fig:high_model_conv}
and Figure~\ref{fig:high_data_res}.

Another way of analyzing the error reduction with respect to different
objective functions is to look at their data residual in the Fourier
domain. Figure~\ref{fig:refl_res_fft} consists of six plots of the
residual spectrum of two inversion schemes at three different
iterations. By comparing the change of spectrum as the iteration
number increases, one can observe that the inversion driven by the $L^2$
norm has a different pattern in changing the residual spectrum with
the $W_2$-based inversion. It focuses on reducing the high-frequency
residual in early iterations and slowly decreasing the low-frequency
residual later. On the other hand, inversion using the $W_2$ metric
reduces the smooth parts of the residual first
(Figure~\ref{fig:refl_W2_iter51_fft}) and then gradually switches to
the oscillatory parts (Figure~\ref{fig:refl_W2_iter101_fft}).

The fact that $W_2$ is more robust than $L^2$ in reconstructing
low-wavenumber components while $L^2$-FWI converges faster and
achieves higher resolution for the high-wavenumber features was
already observed in~\cite{yang2017application}, in an inversion
example with difficulties of local minima. For the two-layer example
discussed above, the $L^2$-based inversion does not suffer from local
minima trapping, but the properties for these two inversion schemes
still hold. Rigorous analysis has been done in~\cite{ERY2019}, where
the $L^2$ norm and the $W_2$ metric were discussed under both the
asymptotic and nonasymptotic regimes for data comparison. Theorems
have demonstrated that $L^2$ norm is good at achieving high resolution
in the reconstruction, while the $W_2$ metric gives better stability
with respect to data perturbation. Using $W_2$-based inversion to
build a good starting model for the $L^2$-based inversion in a later
stage is one way to combine useful features of both methods.

\section{Numerical Examples}\label{sec:6}
In this section, we demonstrate several numerical examples under both
synthetic and somewhat more realistic settings. We will see
applications of the $W_2$ convexity analysis in Section~\ref{sec:cvx},
and, more importantly, other improvements in
Section~\ref{sec:three-layer} that are beyond local-minima
trapping. The $L^2$ norm and the $W_2$ metric will be used as the
objective function, and their inversion results will be compared. In
particular, we stick with the so-called trace-by-trace
approach~\eqref{eqn:Wp1D} for inversions using the $W_2$ metric.
\begin{figure}
  \centering \subfloat[True
  velocity]{\includegraphics[width=0.4\textwidth]{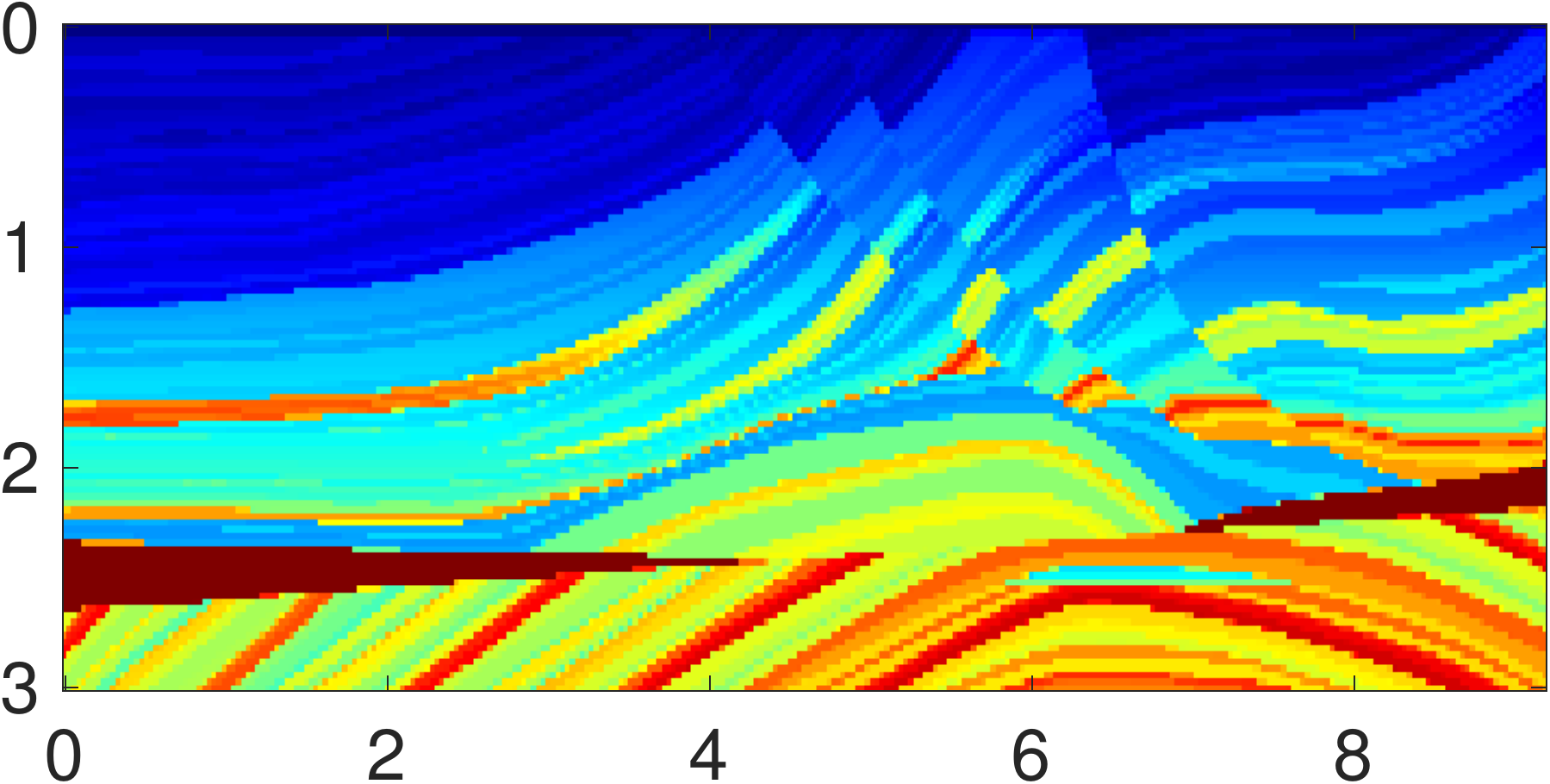}\label{fig:Marm-vG}}
  \hspace{0.08\textwidth}
  \subfloat[Initial velocity]{\includegraphics[width=0.4\textwidth]{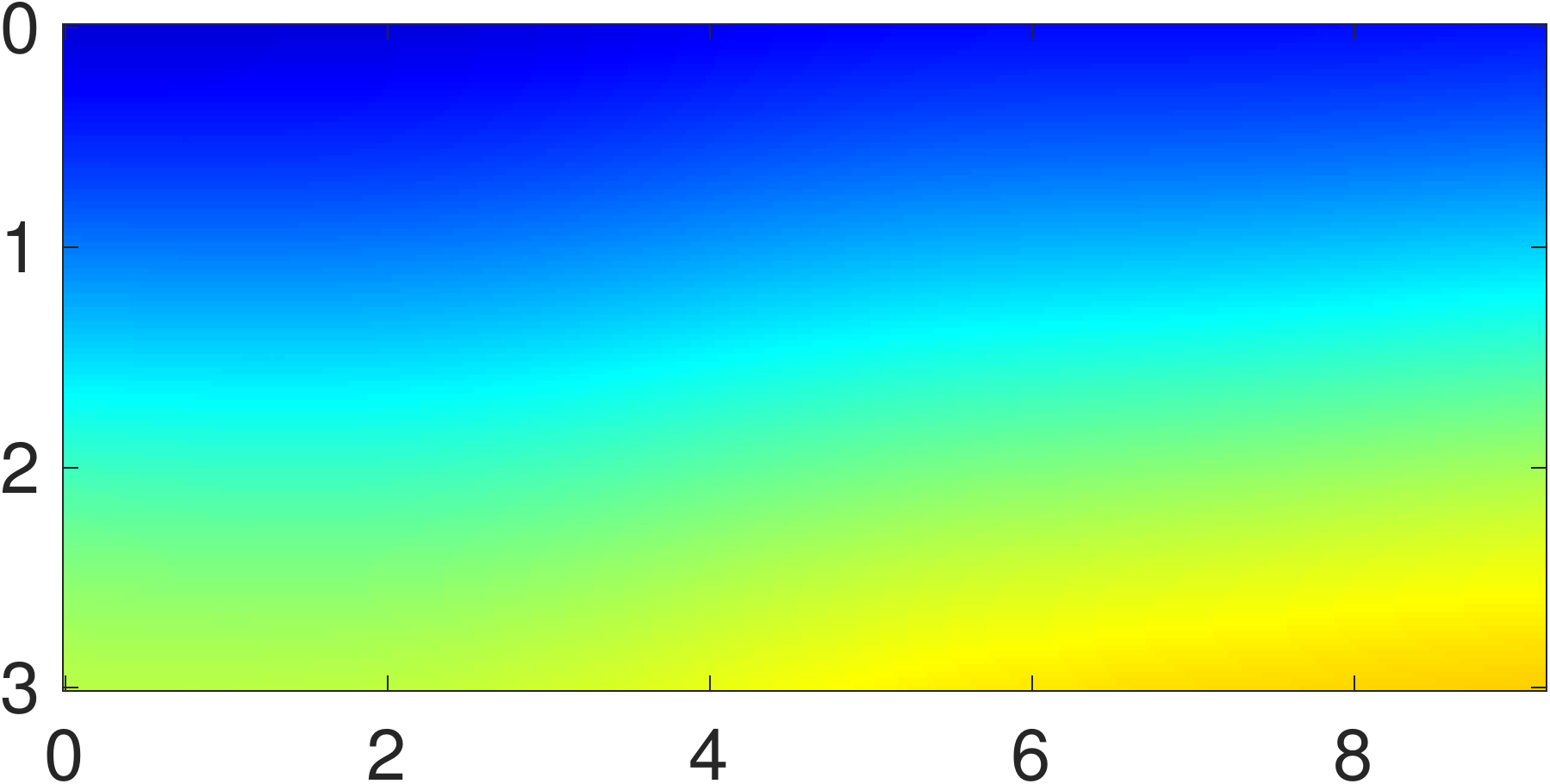}\label{fig:Marm-vF}}  \\
  \subfloat[$L^2$
  inversion]{\includegraphics[width=0.4\textwidth]{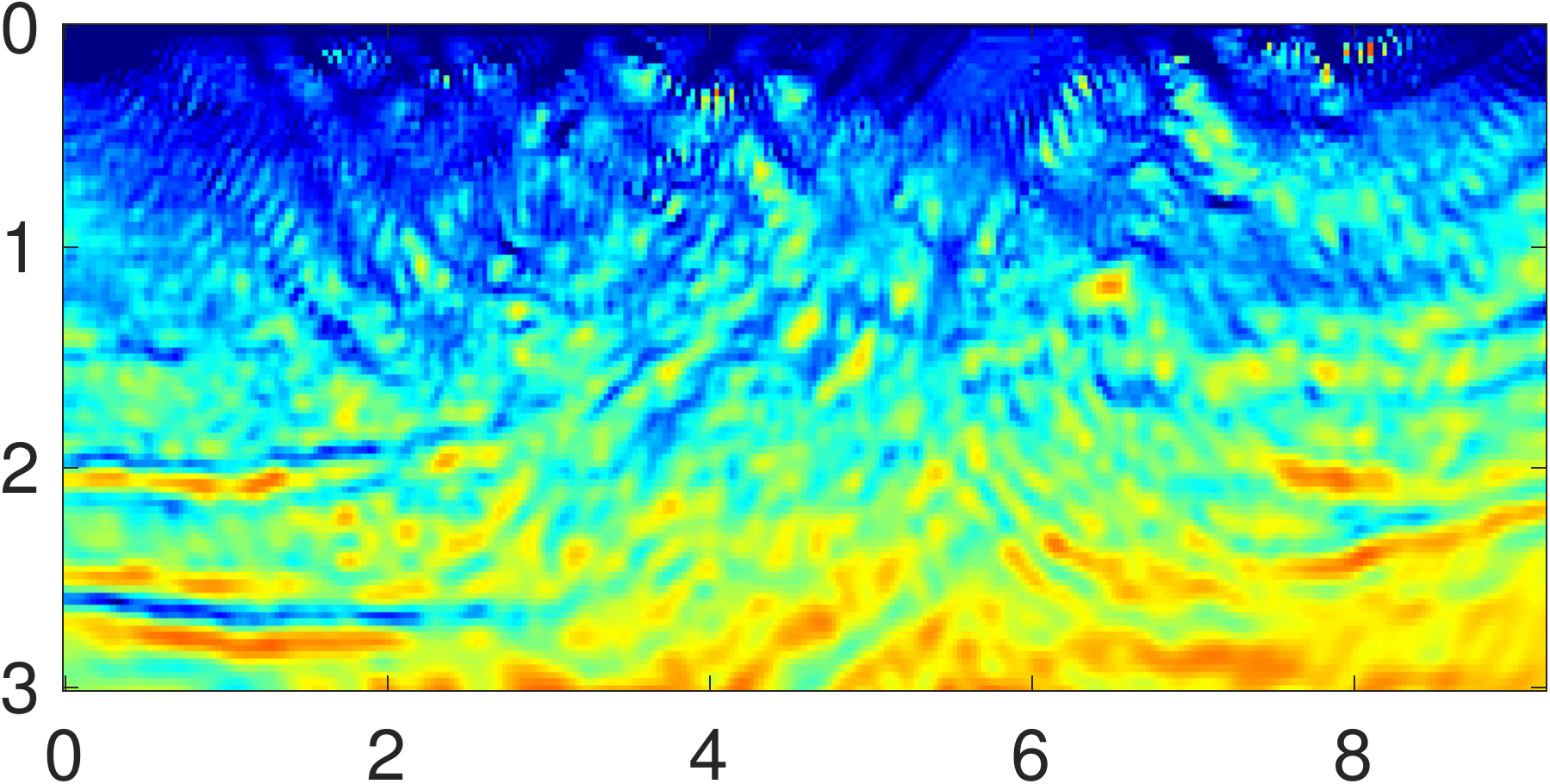}\label{fig:Marm-L2}}
  \hspace{0.08\textwidth}
  \subfloat[$W_2$ inversion]{\includegraphics[width=0.4\textwidth]{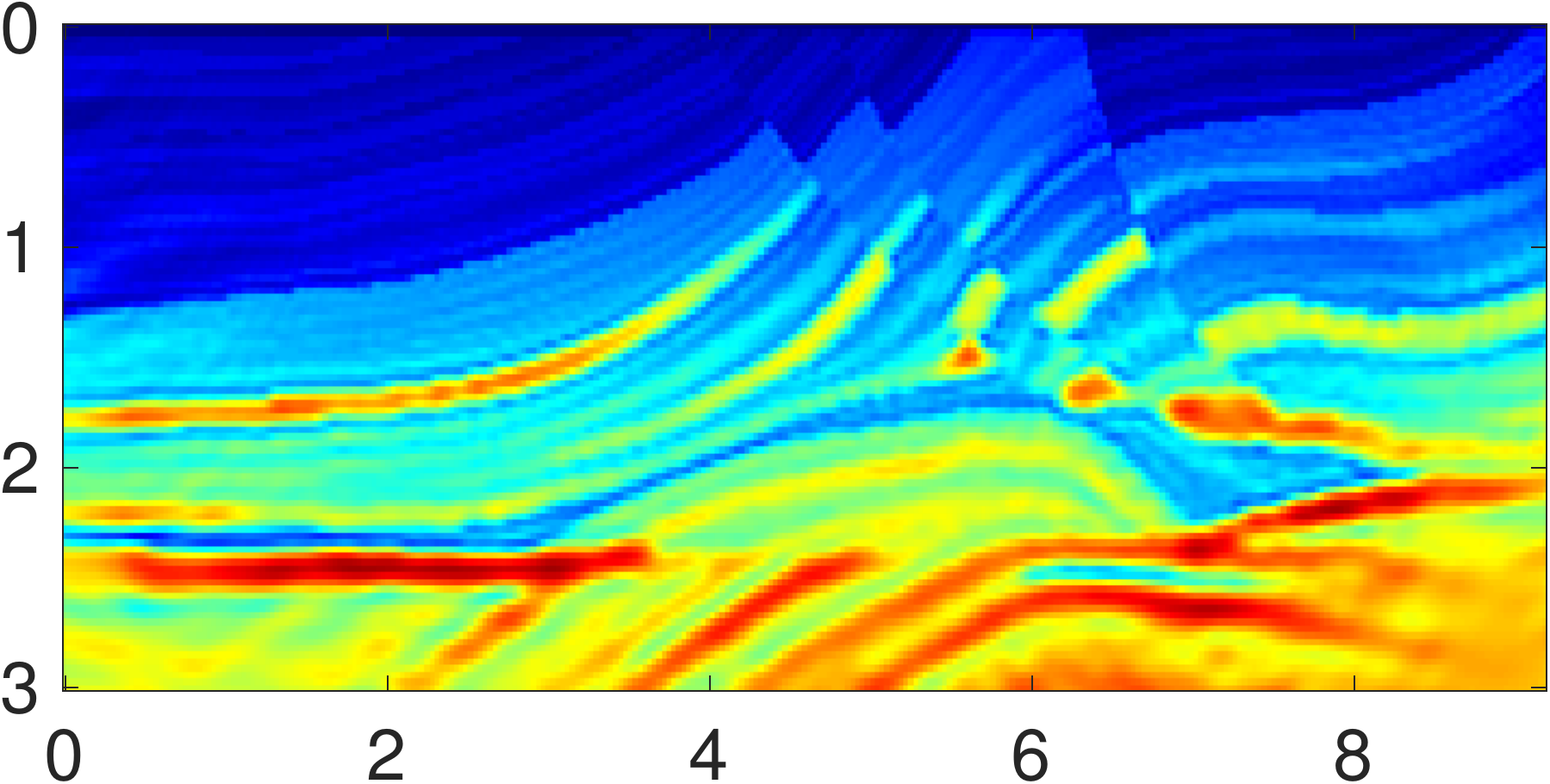}\label{fig:Marm-W2}}  \\
  \subfloat[$L^2$ inversion (with
  noise)]{\includegraphics[width=0.4\textwidth]{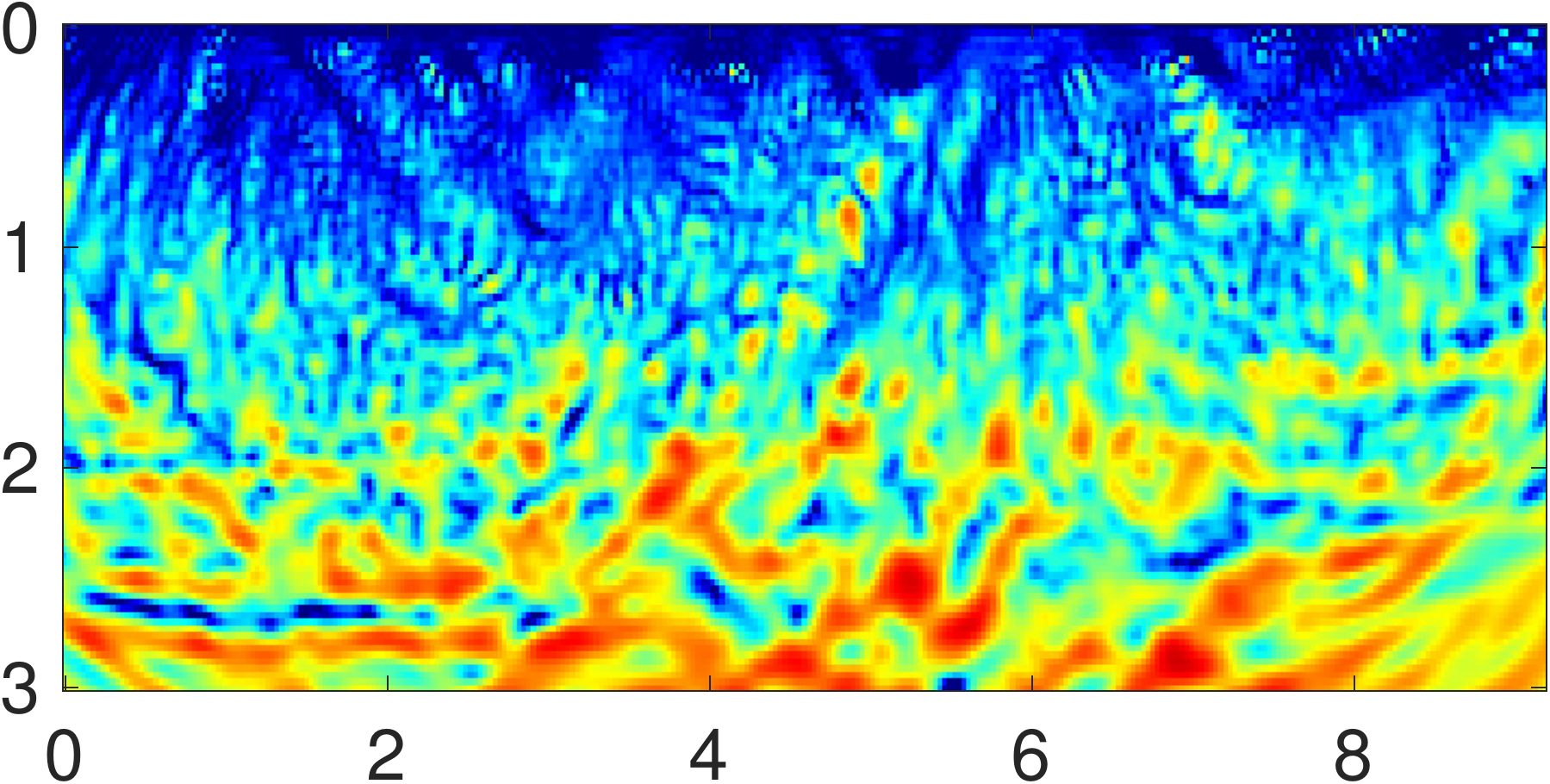}\label{fig:Marm-L2-noise}}
  \hspace{0.08\textwidth}
  \subfloat[$W_2$ inversion (with noise)]{\includegraphics[width=0.4\textwidth]{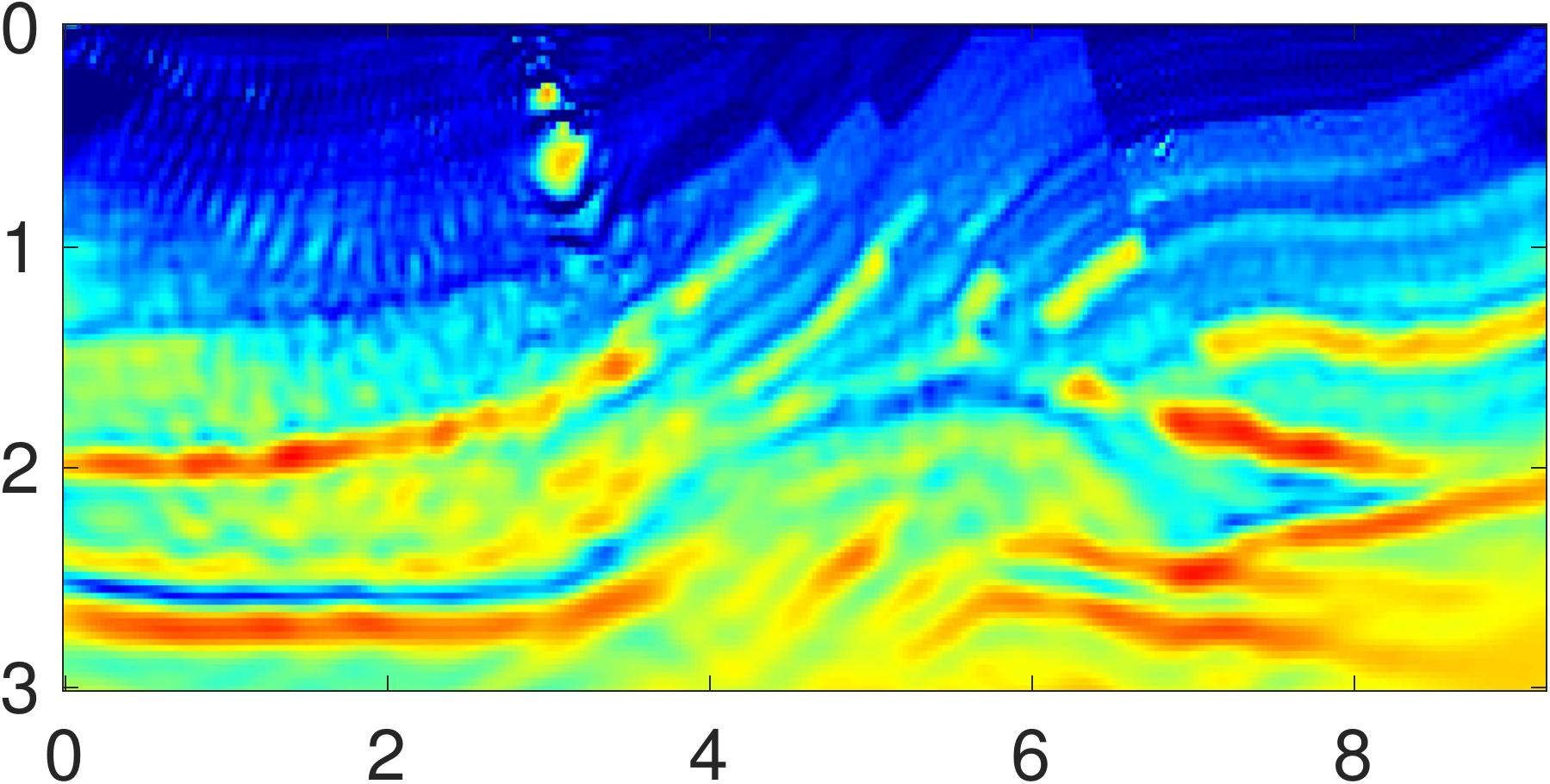}\label{fig:Marm-W2-noise}}  \\
  \caption{The Marmousi model inversion:~(a)~the true~and~(b)~initial
    velocities;~(c) and (d): the $L^2$ and $W_2$ inversion results for the
    synthetic setting;~(e) and (f): the $L^2$ and $W_2$ inversion results
    for the realistic setting. We use the linear
    scaling~\eqref{eq:linear} as the data normalization to calculate
    the $W_2$ metric. All axes are associated with the unit
    km.}~\label{fig:Marm}
\end{figure}

\subsection{The Marmousi Model}\label{sec:marm}
The true velocity of the Marmousi model is presented in
Figure~\ref{fig:Marm-vG}, which was created to produce complex seismic
data that require advanced processing techniques to obtain a correct
Earth image. It has become a standard benchmark for methods and
algorithms for seismic imaging since
1988~\cite{versteeg1994marmousi}. The initial model shown in
Figure~\ref{fig:Marm-vF} lacks all the layered features. We will
invert the Marmousi model numerically using the $L^2$ norm and the
$W_2$ metric under one synthetic setting and a more realistic setting
in terms of the observed true data.

\subsubsection{Synthetic Setting}
Under the same synthetic setting, the true data and the synthetic data
are generated by the same numerical scheme. The wave source is a 10 Hz
Ricker wavelet. A major challenge for the $L^2$-based inversion is the
phase mismatches in the data generated by the true and initial
velocities. After 300 L-BFGS iterations, the $L^2$-based inversion
converges to a local minimum, as shown in Figure~\ref{fig:Marm-L2},
which is also a sign of cycle skipping due to the lack of convexity
with respect to data translation. The $W_2$-based inversion, on the
other hand, recovers the velocity model correctly, as seen in
Figure~\ref{fig:Marm-W2}. As discussed in Section~\ref{sec:cvx}, the
$W_2$ metric has the important convexity with respect to data
translation and dilation. Hence, the initial model is within the basin
of attraction for the $W_2$-based inversion. Once the background
velocity is correctly recovered, the missing high-wavenumber features
can also be reconstructed correctly.

\subsubsection{A More Realistic Setting} \label{sec:MarmReal} For
field data inversions, source approximation, the elastic effects,
anisotropy, attenuation, noisy measurement, and many other factors
could bring modeling errors to the forward propagation. To discuss the
robustness of the $W_2$-based method with respect to the accuracy of
the source estimation and the noise in the measurements, we present
another test with more challenging settings.


\begin{figure}
  \centering \subfloat[Noise in the
  data]{\includegraphics[width=0.48\textwidth]{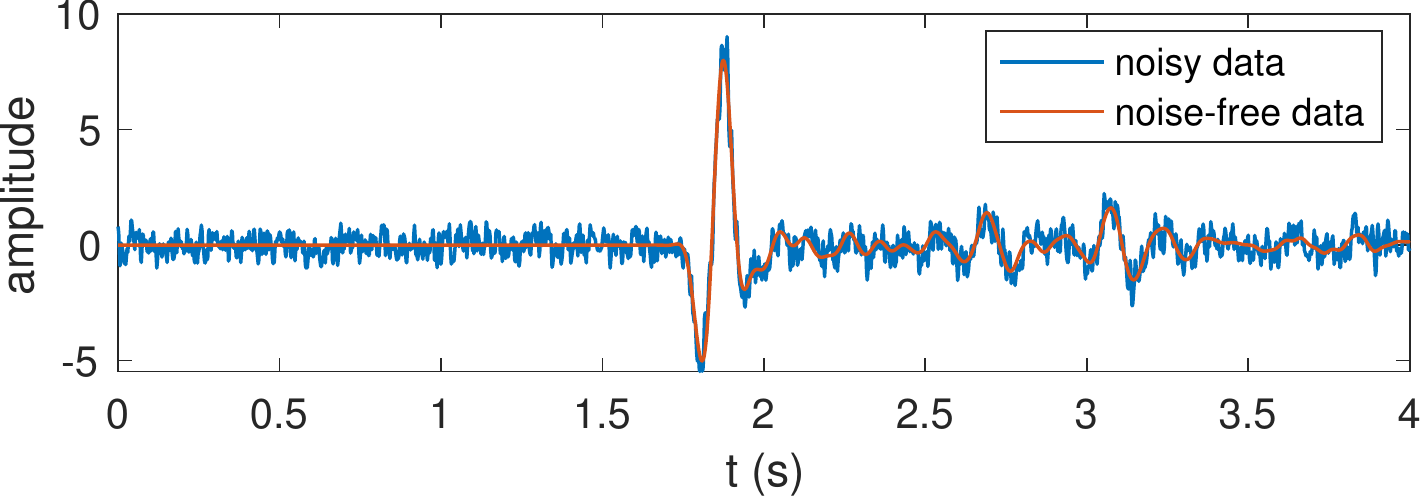}\label{fig:Marm-data}}\quad  
  \subfloat[Estimation of the source
  wavelet]{\includegraphics[width=0.48\textwidth]{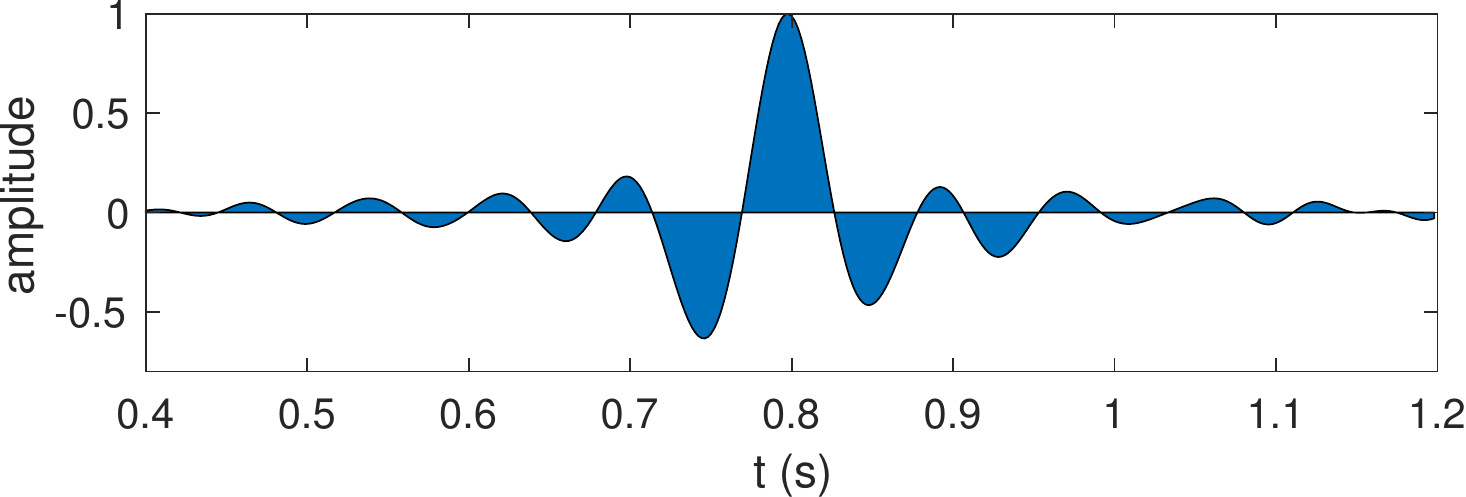}\label{fig:Marm-new-src}}
  \caption{The realistic setting for the Marmousi model:~(a)~noisy and
    clean observed datasets comparison at one trace; (b)~estimation of
    the source wavelet by the linear waveform
    inversion.}~\label{fig:noise}
\end{figure}

The observed data is generated by a Ricker wavelet centered at 10 Hz
as in the previous test, which is additionally polluted by mean-zero
correlated random noise; see an illustration of the noisy data in
Figure~\ref{fig:Marm-data}. We estimate the source profile by linear
least-squares waveform inversion of the direct
wave~\cite{pratt1990inverse1,ribodetti2011joint}. A homogeneous medium
of $1.5$ km/s is used as the velocity model in the linear
inversion. The reconstructed source wavelet is shown in
Figure~\ref{fig:Marm-new-src}.  Inversion results are illustrated in
Figure~\ref{fig:Marm-L2-noise} and Figure~\ref{fig:Marm-W2-noise}. The
presence of noise and the inaccurate wave source deteriorate the
$L^2$ result, but only mildly change the $W_2$-based inversion. Most
structures are recovered with relatively lower
resolution. Inaccuracies are present for the shallow
part. Nevertheless, in comparison, the $W_2$ metric is more robust
than the $L^2$ norm for small perturbations in the data that come from
modeling error or measurement noise.

\subsubsection{The Scaling Method}
In the $W_2$-based Marmousi model inversions, we transform the
wavefields into probability densities by the linear
scaling~\eqref{eq:linear}. The constant $c$ is chosen to be the
$\ell^\infty$ norm of the observed data. Although the normalized $W_2$
metric lacks strict convexity in terms of data translations as we 
pointed out in~\cite{Survey2}, the linear scaling still works
remarkably well in practice for most cases, including the realistic
inversions in the industry. Also, normalization methods~\eqref{eq:exp}
and~\eqref{eq:softplus} give similar results and are therefore not
included. Nevertheless, data normalization is an important step for
$W_2$-based full-waveform inversion. More analysis and discussions are
presented in Section~\ref{sec:Data_Normalization}.

It is observed here and in many other works on optimal-transport-based
FWI that a slight improvement in the convexity of the misfit function
seems to produce significant effects on the inversion in mitigating
cycle-skipping issues~\cite{yang2017application,W1_3D}. It is also the
case for the linear scaling~\eqref{eq:linear}. The linear scaling does
not preserve the convexity of the $W_2$ metric with respect to
translation, but as a sign of improvement, the basin of attraction is
larger than the one for the $L^2$ norm, as shown previously in
Figure~\ref{fig:2_ricker_L2} and~Figure~\ref{fig:W2-linear}.

We can
point to three reasons that may contribute to the success of the
linear scaling in practice. First, as discussed in
Section~\ref{sec:Data_Normalization}, adding a constant to the signals
before being compared by the $W_2$ metric brings the Huber effect for
better robustness with respect to outliers. Second, it smooths the FWI
gradient and thus emphasizes the low-wavenumber components of the
model parameter. Third, the convexity with respect to signal
translation only covers one aspect of the challenges in realistic
inversions. The synthetic data is rarely a perfect translation of the
observed data in practice. The nonconvexity caused by the linear
scaling might not be an issue in realistic settings, while the
outstanding benefits of adding a positive constant dominate.

\subsection{The Circular Inclusion Model}\label{sec:cheese}
We have presented the Marmousi model to demonstrate that $W_2$-based
inversion is superior to $L^2$. We also show that the linear
scaling~\eqref{eq:linear} is often good enough as a normalization
method. However, to address the importance of data normalization in
applying optimal transport, we create a synthetic example in which the
linear scaling~\eqref{eq:linear} affects the global convexity of
$W_2$.

We want to demonstrate the issues above with a circular inclusion
model in a homogeneous medium (with $6$ km in width and $4.8$ km in
depth). It is also referred to as the ``Camembert''
model~\cite{gauthier1986two}. The true velocity is shown in
Figure~\ref{fig:cheese-vG}, where the anomaly in the middle has wave
speed $4.6$ km/s while the rest is $4$ km/s. The initial velocity we
use in the inversion is a homogeneous model of $4$ km/s. We have 13
sources of 10 Hz Ricker wavelet equally aligned on the top of the
domain, while 201 receivers are on the bottom. The recorded signals
contain mainly transmissions, which are also illustrated in
Figure~\ref{fig:ck} as an example of the cycle-skipping
issues. Figure~\ref{fig:ckdiff} shows the initial data fit, which is
the difference between the observed data and the synthetic data
generated by the initial velocity model. As seen in
Figure~\ref{fig:cheese-l2}, the inversion with the $L^2$ norm as the
objective function suffers from local minima trapping.


\begin{figure}
  \centering \subfloat[True
  velocity]{\includegraphics[width=0.33\textwidth]{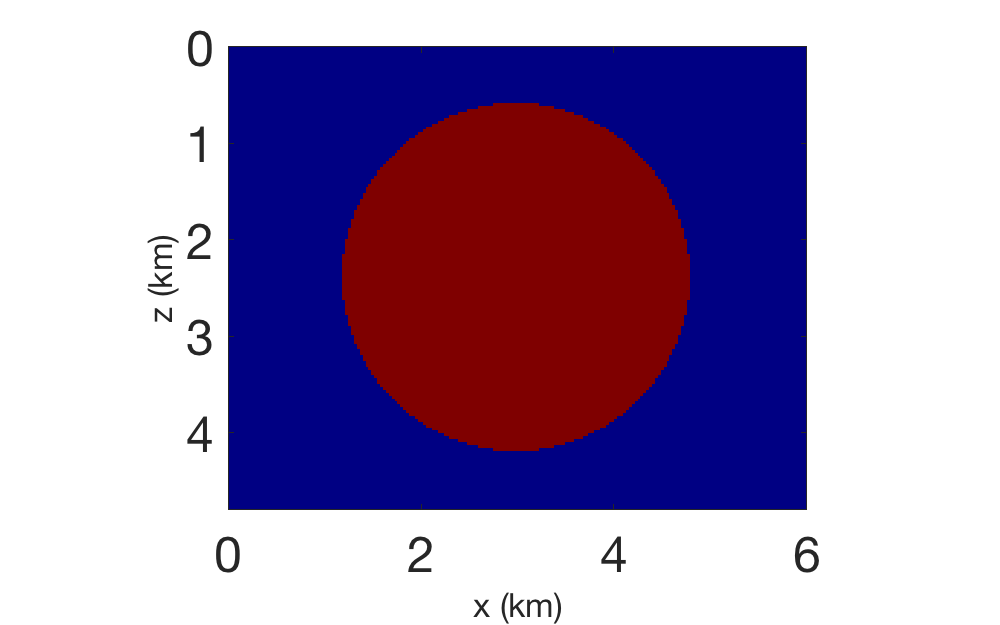}\label{fig:cheese-vG}}
  \subfloat[$L^2$
  inversion]{\includegraphics[width=0.33\textwidth]{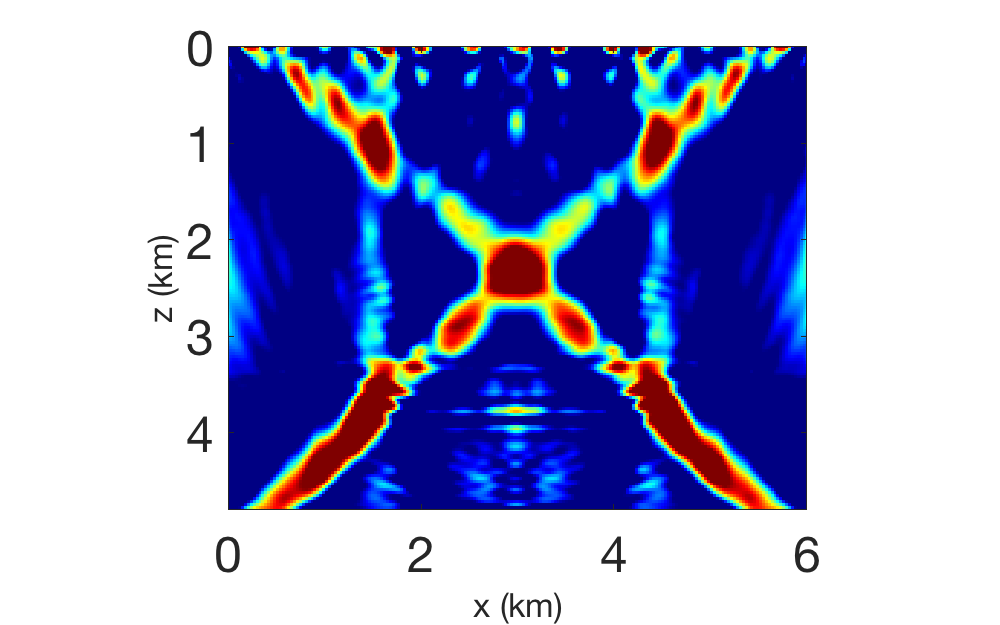}\label{fig:cheese-l2}}
  \subfloat[$W_2$, square]{\includegraphics[width=0.33\textwidth]{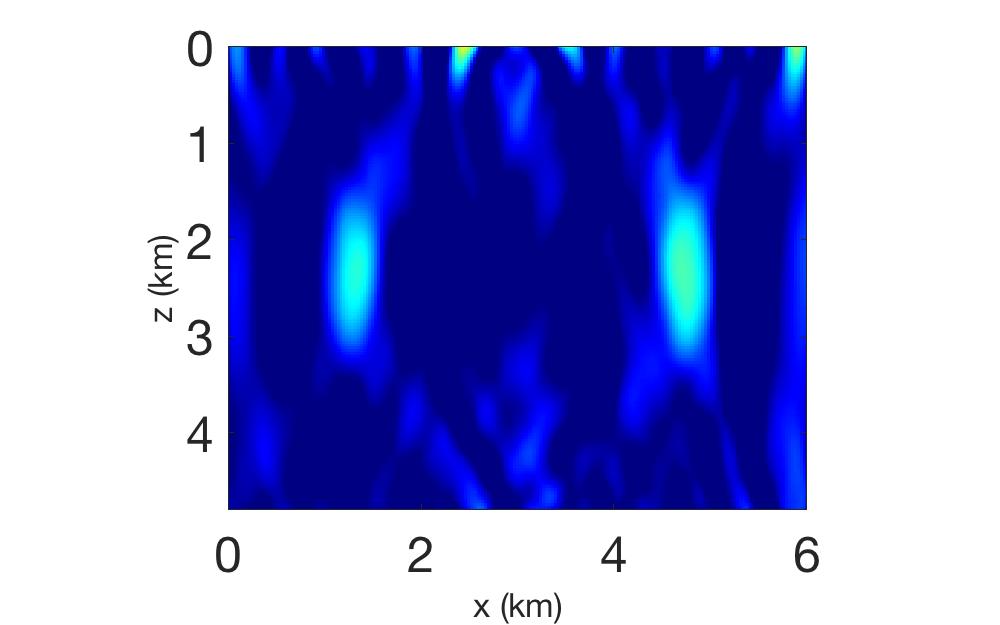}\label{fig:cheese-f2}}  \\
  \subfloat[$W_2$,
  linear]{\includegraphics[width=0.33\textwidth]{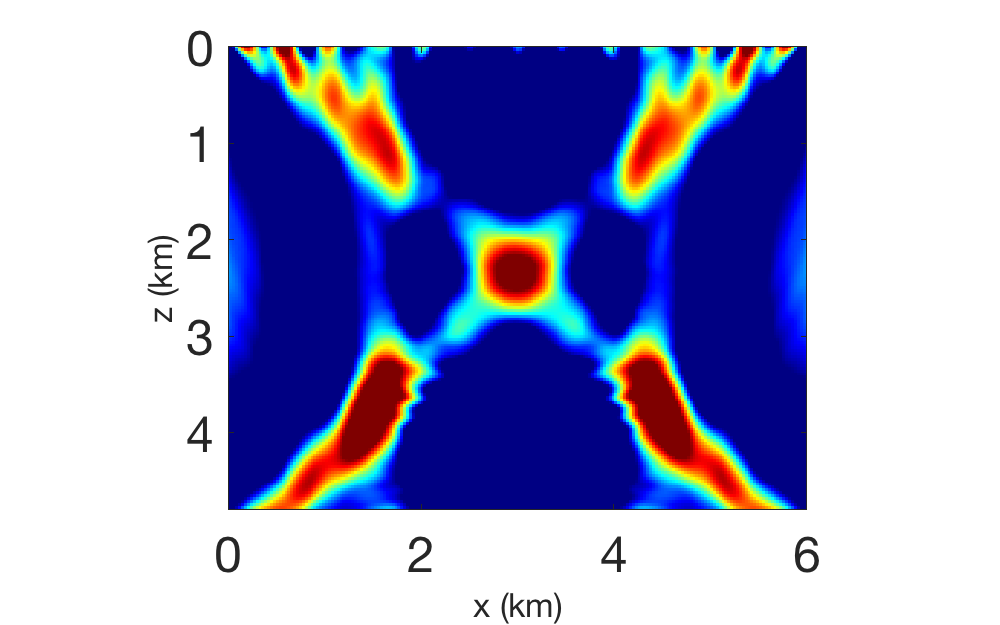}\label{fig:cheese-linear}}
  \subfloat[$W_2$,
  exponential]{\includegraphics[width=0.33\textwidth]{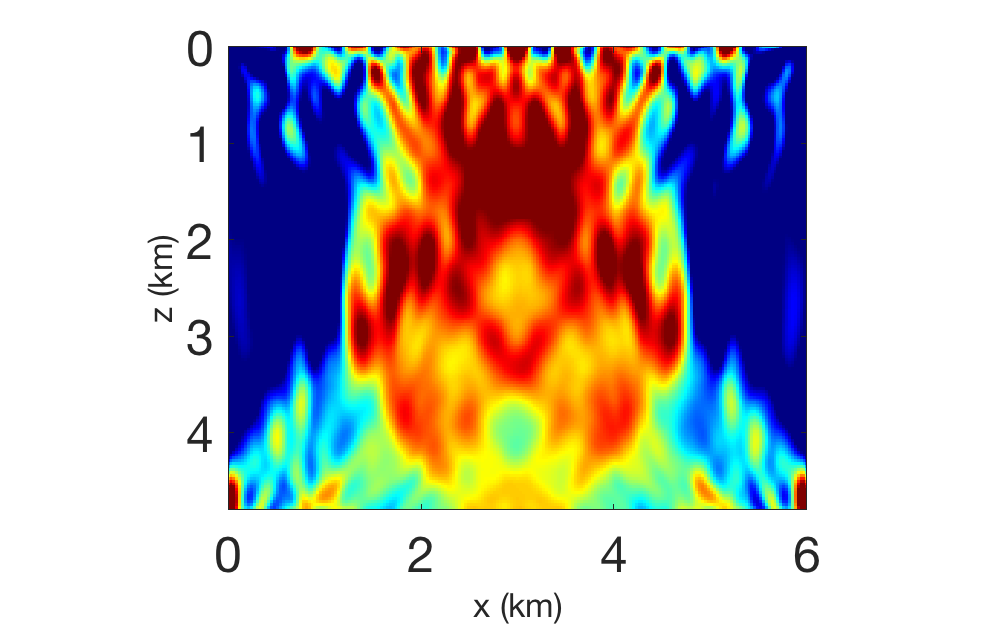}\label{fig:cheese-exp}}
  \subfloat[$W_2$,
  softplus]{\includegraphics[width=0.33\textwidth]{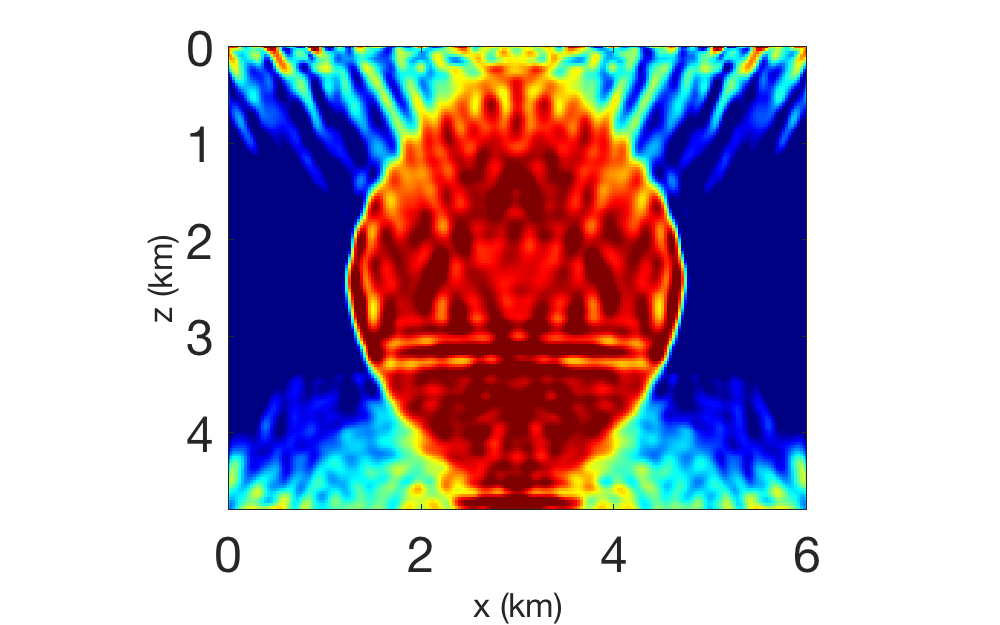}\label{fig:cheese-logi}}
  \caption{Velocity model with circular inclusion: (a) True velocity;
    (b) $L^2$ inversion; (c)--(f): $W_2$-based inversion using the
    square, the linear, the exponential ($b=0.2$), and the softplus
    scaling ($b=0.2$).}~\label{fig:cheese}
\end{figure}


\begin{figure}
  \centering \subfloat[$L^2$
  inversion]{\includegraphics[width=0.2\textwidth]{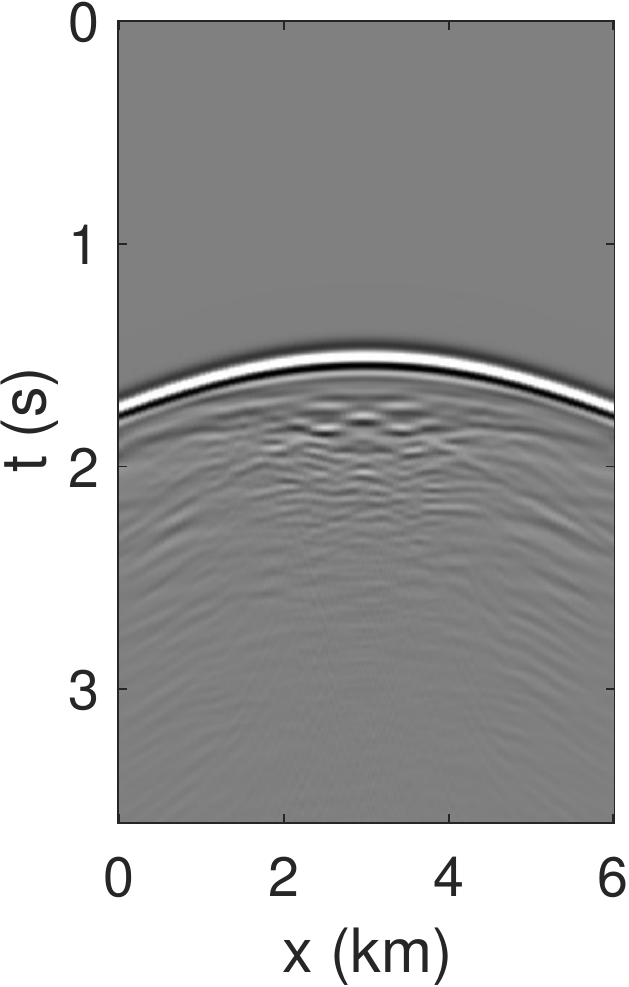}\label{fig:cheese-l2-res}}
  \subfloat[$W_2$,
  square]{\includegraphics[width=0.2\textwidth]{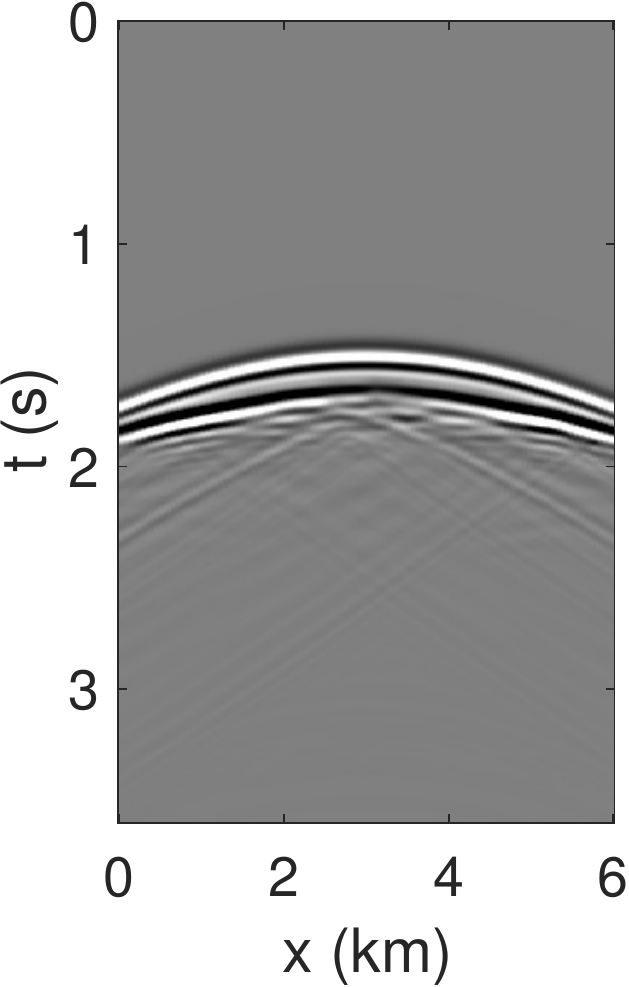}\label{fig:cheese-f2-res}}
  \subfloat[$W_2$,
  linear]{\includegraphics[width=0.2\textwidth]{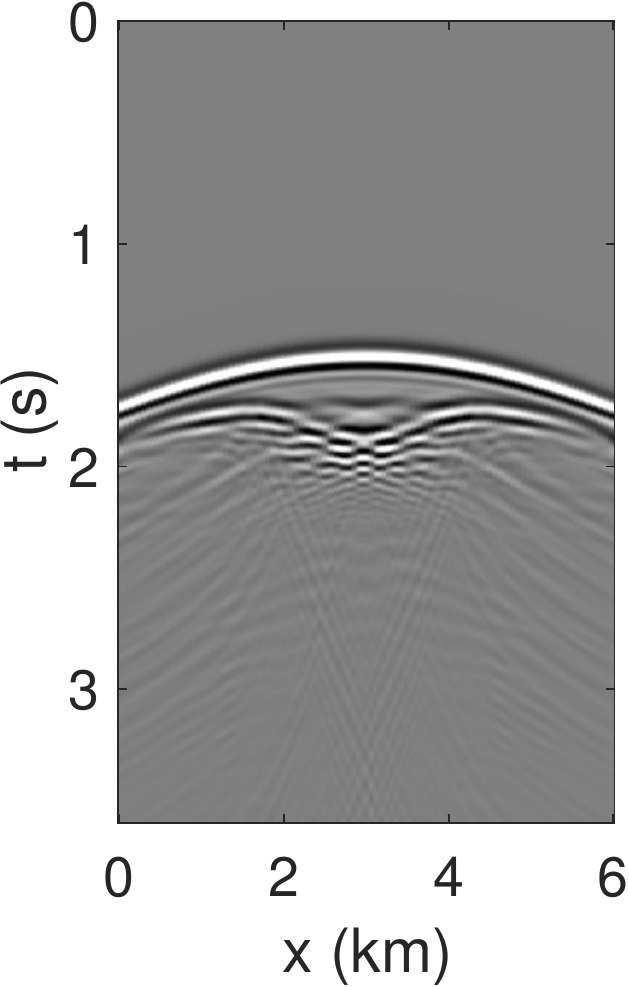}\label{fig:cheese-linear-res}}
  \subfloat[exponential]{\includegraphics[width=0.2\textwidth]{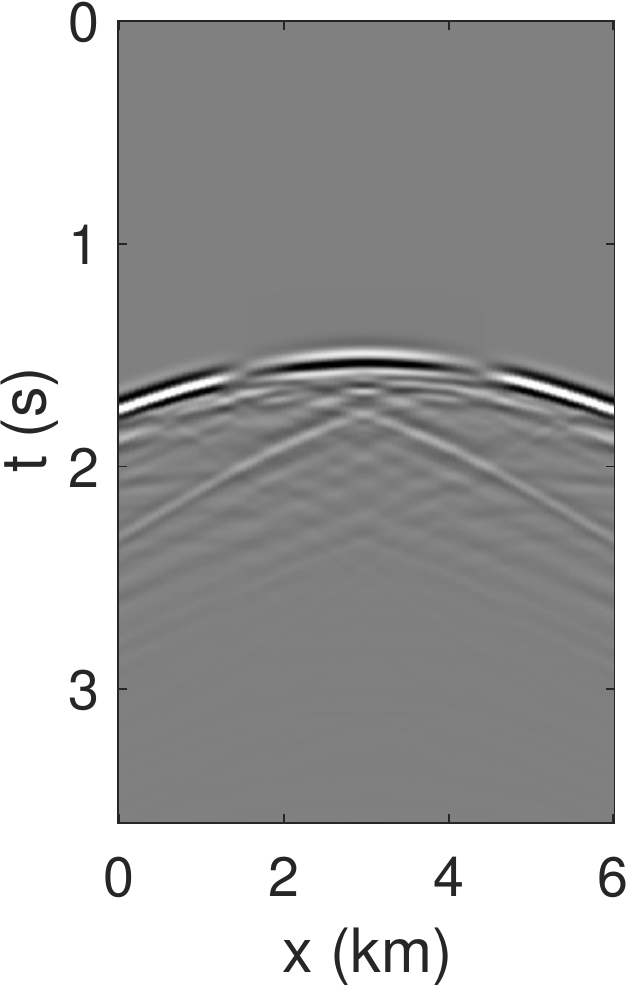}\label{fig:cheese-exp-res}}
  \subfloat[$W_2$,
  softplus]{\includegraphics[width=0.2\textwidth]{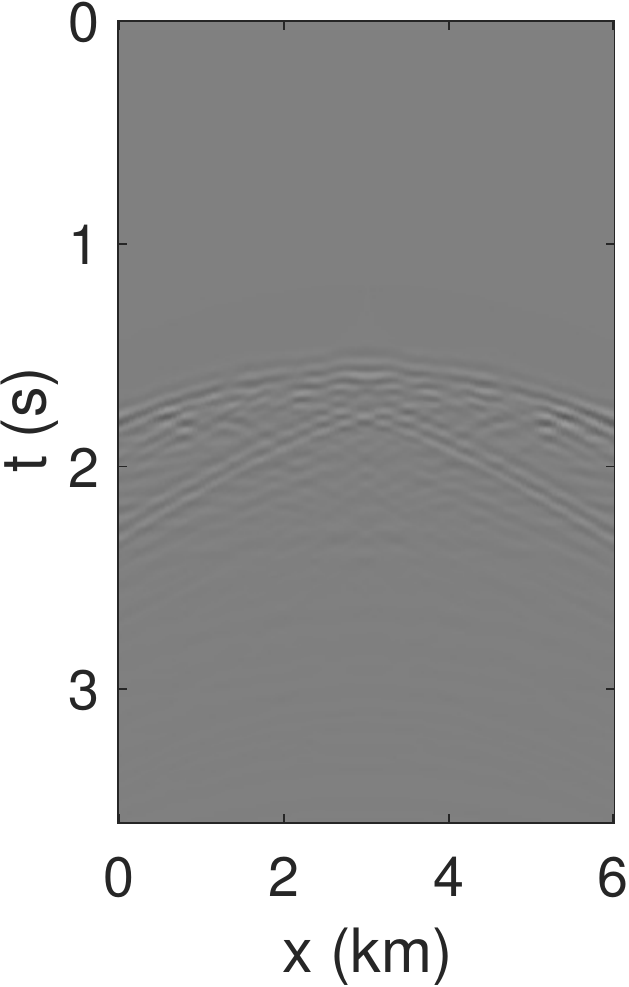}\label{fig:cheese-logi-res}}
  \caption{Circular inversion: the data fit in the final models for
    the $L^2$ inversion and the $W_2$ inversions with the square,
    linear, exponential, and softplus scalings. All figures are
    plotted under the same colormap. The initial data fit is presented
    in Figure~\ref{fig:ckdiff}.}~\label{fig:cheese-res}
\end{figure}

Figure~\ref{fig:cheese-f2} to Figure~\ref{fig:cheese-logi} present
inversion results by using the $W_2$ metric as the objective function,
but with different data normalization methods: the square
scaling~\cite{Survey2}, the linear scaling $\sigma_l$, the exponential
scaling $\sigma_e$, and the softplus scaling~$\sigma_s$,
respectively. As shown in Figure~\ref{fig:cheese-f2} and
Figure~\ref{fig:cheese-linear}, $W_2$-based inversion under the square
scaling and the linear scaling also suffer from cycle-skipping issues
whose final inversion results share similarities with the
reconstruction by the $L^2$ norm. Theoretically, the $W_2$ metric is
equipped with better convexity, but the data normalization step may
weaken the property if an improper scaling method is used. On the
other hand, the exponential scaling and the softplus scaling can keep
the convexity of the $W_2$ metric when applied to signed
functions. The additional hyperparameter $b$ in the scaling functions
helps to preserve the convexity; see Corollary~\ref{thm: softplus} and
the discussions in Section~\ref{sec:b}.

Exponential functions amplify both the signal and the noise
significantly, making the softplus scaling a more stable method,
especially when applied with the same hyperparameter $b$. In this
Camembert model, Figure~\ref{fig:cheese-logi} with the softplus
scaling and $b=0.2$ is the closest to the truth, while
Figure~\ref{fig:cheese-exp} by the exponential scaling with $b=0.2$
lacks good resolution around the bottom part. All the figures are
plotted under the same color scale. We think data normalization is the
most important issue for optimal-transport-based seismic inversion. We
have devoted the entire Section~\ref{sec:Data_Normalization} to this
important topic, and more developments in the optimal transport theory
are necessary to ultimately resolve the issue.

Finally, we present the data misfit of the converged models in
Figure~\ref{fig:cheese-res}, which are the differences between the
observed data and the synthetic data at convergence from different
methods. Compared with the initial data fit in
Figure~\ref{fig:ckdiff}, the square scaling for the $W_2$ inversion
hardly fits any data, while inversions with the $L^2$ norm and the
linear scaling reduce partial initial data residual. The exponential
and the softplus scaling are better in performance, while the latter
stands out for the best data fitting under the setup of this
experiment.

\begin{figure}[h]
  \centering
  \subfloat[Adjoint sources comparison at one trace]{\includegraphics[width=0.95\textwidth]{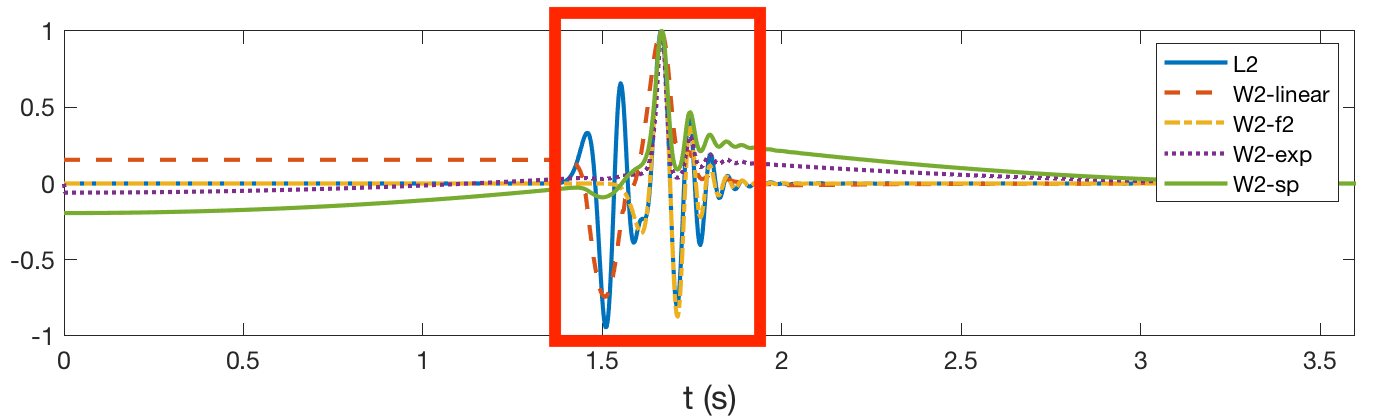}\label{fig:adjsrc1}} \\
  \subfloat[Detailed view inside the red box in
  (a)]{\includegraphics[width=0.95\textwidth]{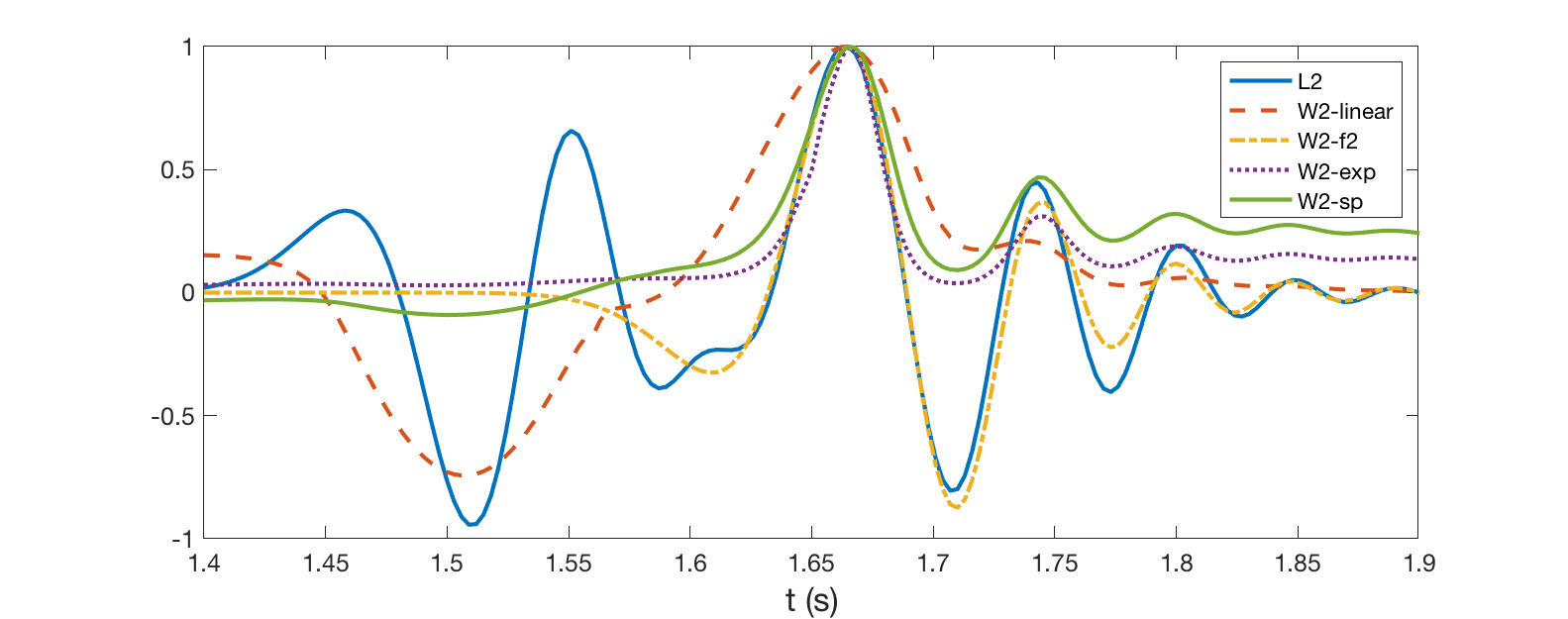}\label{fig:adjsrc2}}
  \caption{Circular inversion: comparison of normalized adjoint
    sources (the Fr\'echet derivative of the objective function with
    respect to the synthetic data) at the first iteration for the
    $L^2$ inversion and the $W_2$ inversion with the square, linear,
    exponential, and softplus scalings.}\label{fig:cheese-adjsrc}
\end{figure}

In Figure~\ref{fig:cheese-adjsrc}, we compare the adjoint source of
different methods at the first iteration. To better visualize the
differences, we focus on one of the traces and zoom into the wave-type
features. The $L^2$ adjoint source is simply the difference between
the observed and the synthetic signals, while the one by the linear
scaling is closer to its envelope. The enhancement in the
low-frequency contents matches our analysis in
Section~\ref{sec:Data_Normalization}. It also partially explains why
the linear scaling alone is often observed to mitigate the
cycle-skipping issues effectively. The adjoint source by the square
scaling has the oscillatory features of the seismic data. The
exponential and the softplus scaling methods share similar adjoint
sources at the first iteration of the inversion.

\begin{remark}
  Although inversion with the linear scaling fails in the
  Camembert example, it often works well in realistic cases where
  the recorded data contain various types of seismic waveforms; for
  example, see Figure~\ref{fig:test1_W2} and
  Figure~\ref{fig:Marm-W2}. For cases where the linear scaling
  struggles, one can turn to the softplus scaling~\cite{qiu2017full},
  as shown in Figure~\ref{fig:cheese}. With properly chosen
  hyperparameters, the softplus scaling keeps the convexity of the
  $W_2$ metric, which we proved in Corollary~\ref{thm:
    softplus}. Hence, the inversion process does not suffer from
  cycle-skipping issues.
\end{remark}

\subsection{The salt model}
In this section, we invert a more realistic model problem, which
represents the challenging deep-layer reconstruction with
reflections. It is an application of other improvements of the
$W_2$-based inversion discussed in Section~\ref{sec:three-layer} that
are beyond tackling local minima.

\begin{figure}[t]
  \centering
  \subfloat[The original, low-pass filtered and high-pass filtered true velocity]{\includegraphics[width=0.95\textwidth]{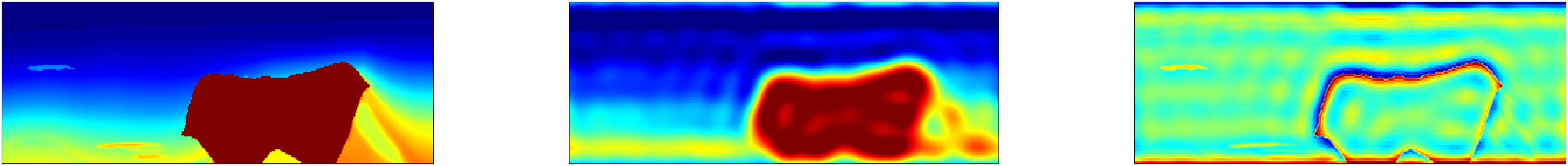}\label{fig:BP_vtrue}}\\
  \subfloat[The original, low-pass filtered and high-pass filtered initial velocity]{\includegraphics[width=0.95\textwidth]{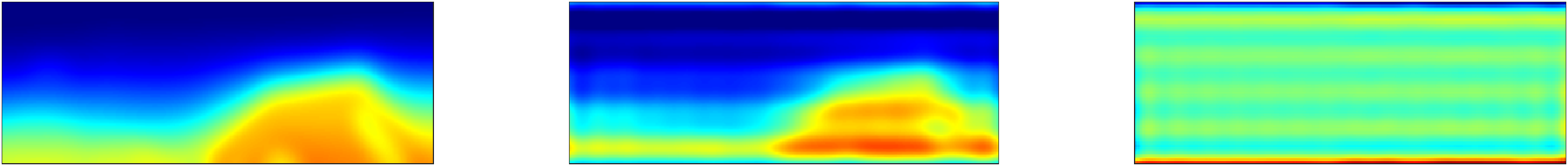}\label{fig:BP_v0}}  \\
  \caption{Salt reconstruction:~(a)~the true~and~(b)~the initial
    velocity.}
\end{figure}

We consider Figure~\ref{fig:BP_vtrue} as the true velocity model,
which is also part of the 2004 BP benchmark (2.8 km in depth and 9.35
km in width).  The model is representative of the complex geology in
the deepwater Gulf of Mexico. The main challenges in this area are
obtaining a precise delineation of the salt and recovering information
on the sub-salt velocity variations~\cite{billette20052004}. All
figures displayed here contain three parts: the original, the
low-pass, and the high-pass filtered velocity models.

The well-known velocity model with strongly reflecting salt inclusion
can be seen as a further investigation of
Section~\ref{sec:three-layer} in a more realistic setting. The
inversion results from these sharp discontinuities in velocity are the
same as the layered model in Section~\ref{sec:three-layer}. With many
reflections and refraction waves contributing to the inversion in the
more realistic models, it is harder to determine the most relevant
mechanisms. Different from the layered example in
Section~\ref{sec:three-layer}, the observed data here contains diving
waves, which are wavefronts continuously refracted upwards through the
Earth due to the presence of a vertical velocity gradient. However,
reflections still carry the essential information of the deep region
in the subsurface and are the driving force in the salt inclusion
recovery.

\begin{figure}[t]
  \centering
  \subfloat[The original, low-pass filtered and high-pass filtered $L^2$-FWI after 100 iterations]{\includegraphics[width=0.95\textwidth]{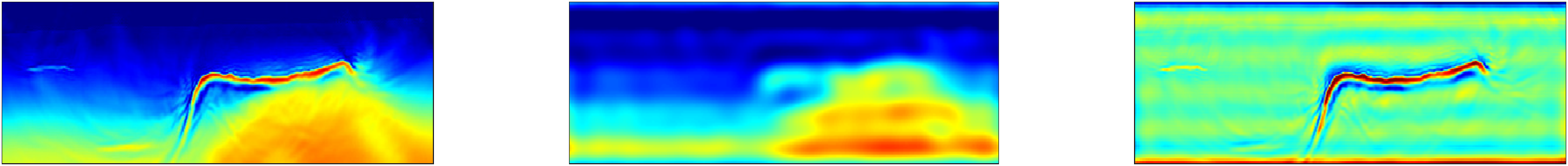}\label{fig:BP_L2_100}}   \\
  \subfloat[The original, low-pass filtered and high-pass filtered $W_2$-FWI after 100 iterations]{\includegraphics[width=0.95\textwidth]{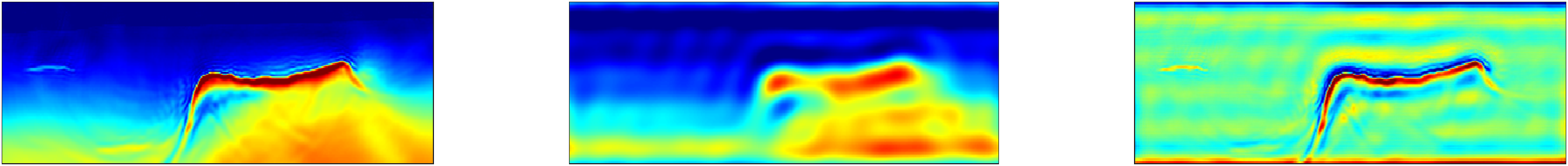}\label{fig:BP_W2_100}}   \\
  \caption{Salt reconstruction:~(a) the $L^2$-based~and~(b)~$W_2$-based
    inversion after 100 L-BFGS iterations.}
\end{figure}
\subsubsection{The synthetic setting}
The inversion starts from an initial model with the smoothed
background without the salt (Figure~\ref{fig:BP_v0}). We place 11
sources of 15 Hz Ricker wavelet and 375 receivers equally on the
top. The total recording time is 4 seconds. The observed data is
dominated by the reflection
\note{I changed  ``refection'' to \\``reflection''.}
from the top of the salt inclusion. After
100 iterations, FWI using both objective functions can detect the salt
upper boundary. Figure~\ref{fig:BP_L2_100} and
Figure~\ref{fig:BP_W2_100} also show that the high-wavenumber
components of the velocity models are partially recovered in both
cases, while the smooth components barely change. This is different
from the Marmousi example, where high-wavenumber components cannot be
correctly recovered ahead of the smooth modes.

With more iterations, the $W_2$-based inversion gradually recovers
most parts of the salt body (Figure~\ref{fig:BP_W2_600}), which is
much less the case for $L^2$-based inversion, as shown in
Figure~\ref{fig:BP_L2_600}. In particular, one can observe that a
wrong sublayer is created after 600 L-BFGS iterations in
Figure~\ref{fig:BP_L2_600}, from which one may have a misleading
interpretation about the Earth. Figure~\ref{fig:BP_W2_600} matches the
original salt body. In general, features related to the salt inclusion
can be better determined by using optimal-transport-related metrics as
the misfit function with the help of both refraction and reflection;
also see~\cite[fig.~4]{yangletter} for example.


\begin{figure}[t]
  \centering
  \subfloat[The original, low-pass filtered and high-pass filtered $L^2$-FWI after 600 iterations]{\includegraphics[width=0.95\textwidth]{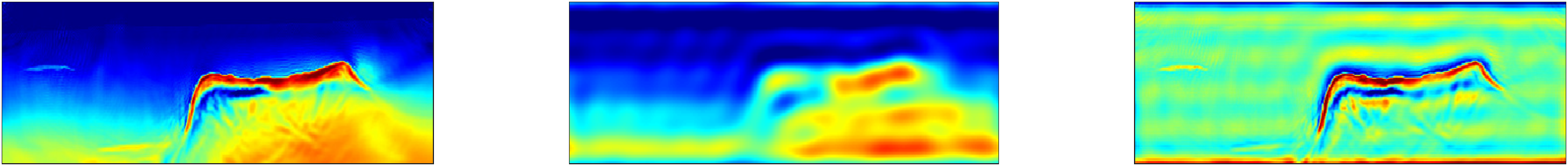}\label{fig:BP_L2_600}}   \\
  \subfloat[The original, low-pass filtered and high-pass filtered
  $W_2$-FWI after 600
  iterations]{\includegraphics[width=0.95\textwidth]{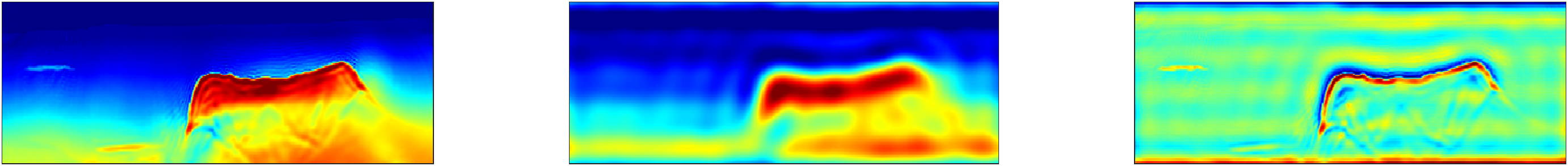}\label{fig:BP_W2_600}}
  \caption{Salt reconstruction:~(a) the
    $L^2$-based~and~(b)~$W_2$-based inversion after 600 L-BFGS
    iterations.}
  \vspace{-4ex}
\end{figure}

\begin{figure}[t]
  \centering
  \subfloat[Comparison between the clean data and the noisy data]{\includegraphics[scale=0.45]{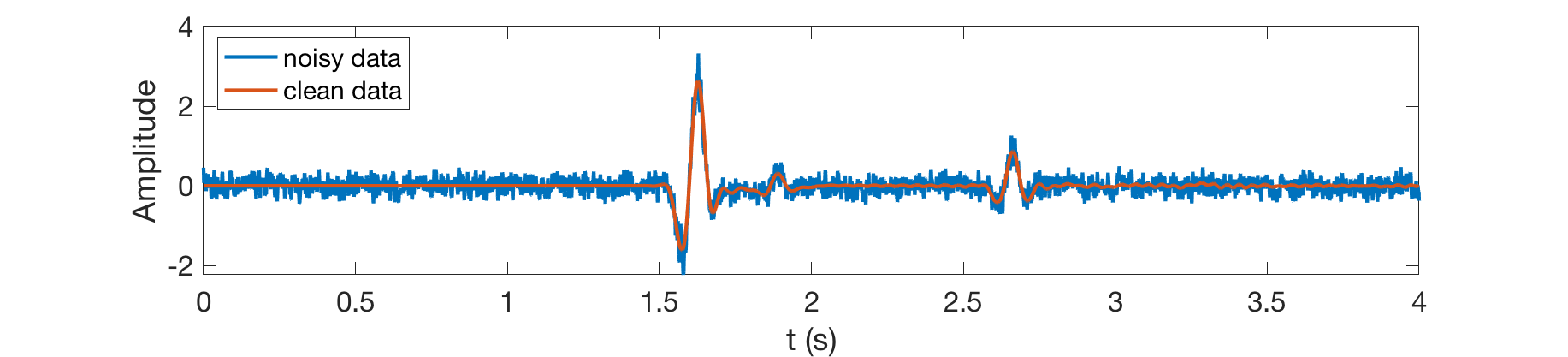}\label{fig:BP2-CRIME-data}}   \\
  \subfloat[$L^2$-based
  inversion]{\includegraphics[scale=0.4]{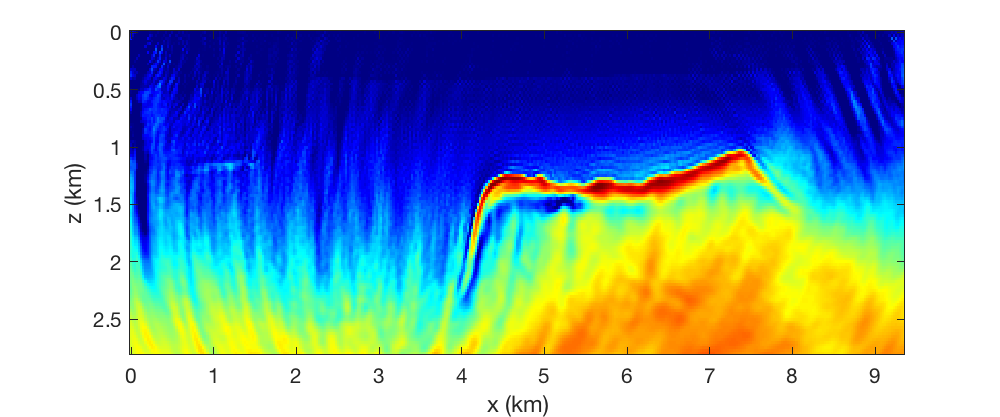}\label{fig:BP2-CRIME-L2}}
  \subfloat[$W_2$-based
  inversion]{\includegraphics[scale=0.4]{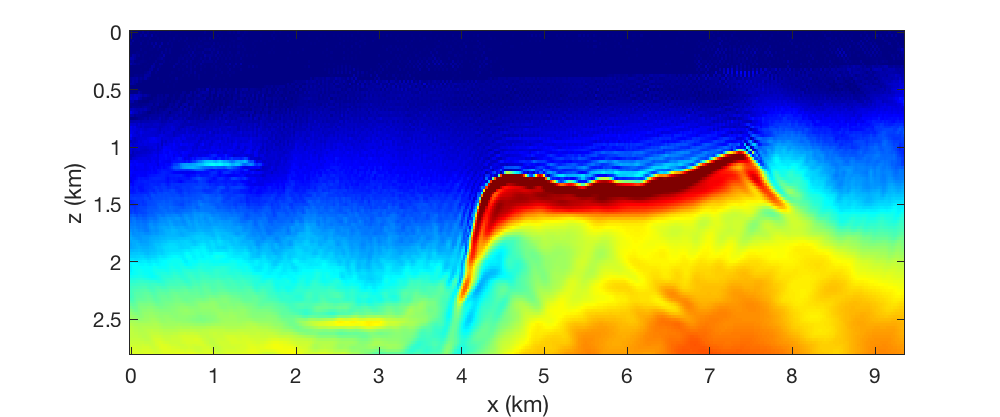}\label{fig:BP2-CRIME-W2}}
  \caption{Salt reconstruction under the realistic setting:~(a)~the
    comparison between the clean data and the noisy data, which is
    also generated by a refined mesh (b)~$L^2$-based inversion
    and~(c)~$W_2$-based inversion after convergence.  }
  \vspace{-3ex}
\end{figure}

\subsubsection{A more realistic setting}
Similar to~Section~\ref{sec:MarmReal}, we present another test where a
refined finite-difference mesh simulates the observed data, and thus a
different wave propagator is used in this inversion
test. Additionally, the reference data contains correlated mean-zero
noise. Other setups remain the same. Figure~\ref{fig:BP2-CRIME-data}
compares samples of the (noisy) observed data used in this test and
the one in the previous test. Figure~\ref{fig:BP2-CRIME-L2} is the
inversion using the $L^2$ norm as the objective function, while
Figure~\ref{fig:BP2-CRIME-W2} presents the inversion results based on
the $W_2$ metric where the linear scaling~\eqref{eq:linear} is used to
normalize the signals. Both tests are stopped after the L-BFGS
algorithm can no longer find a feasible descent direction. With the
presence of noise, both methods correctly reconstruct the upper
boundary of the salt body based on the phase information of the
reflections. However, amplitudes of the wave signals are corrupted by
the noise, which affects the reconstruction of the entire salt body.

The $W_2$-based inversion with the noisy data can still recover a
significant amount of the salt body. However, the thickness of the
layer is smaller than the one from the synthetic setting
(Figure~\ref{fig:BP_W2_600}). It is expected based on our discussion
on the small-amplitude sensitivity in
Section~\ref{sec:SmallAmplitude}. The energy reflected by the upper
boundary of the salt is minimal, but it is critical for the $W_2$
metric to reconstruct the model features below the reflecting
interface in the previous synthetic setting. The small reflection can
be obscured by the presence of noise and modeling error under this
realistic setting.

Nevertheless, the $W_2$-based reconstruction has almost no footprint
from the noise except the salt body, while the $L^2$-based inversion
has a distinct noise pattern; see Figure~\ref{fig:BP2-CRIME-L2}. It is
an interesting phenomenon observed from all numerical tests of
$W_2$-based inversion. Instead of overfitting the noise or the wrong
information in the data, $W_2$-based inversion stops with no feasible
descent direction, while $L^2$-based inversion continues updating the
model parameter, but mainly fits the noise and numerical errors, which
in turn gives noisy and incorrect reconstructions. It is another
demonstration of the good stability of the $W_2$ metric for
data-fitting problems, as discussed in~\cite{ERY2019}.

\section{Conclusion}
In this paper, we have analyzed several favorable new properties of
the quadratic Wasserstein distance connected to seismic inversion,
compared to the standard $L^2$ techniques. We have presented a sharper
convexity theorem regarding both translation and dilation changes in
the signal. The improved theorem offers a more solid theoretical
foundation for the wide range of successful field data inversions
conducted in the exploration industry. It shows why trapping in local
minima, the so-called cycle skipping, is avoided, and the analysis
gives guidance to algorithmic developments.

Data normalization is a central component of the paper. It has always
been a limitation for optimal transport applications, including
seismic imaging, which needs to compare signed signals where the
requirement of nonnegativity and the notion of equal mass are not
natural. We study different normalization methods and define a class
with attractive properties. Adding a buffer constant turns out to be
essential. In a sequence of theorems, we show that the resulting
relevant functional for optimization is a metric, and the normalized
signals are more regular than the original ones. We also prove a
Huber-norm type of property, which reduces the influence of seismic
events that are far apart and should not affect the optimization. The
analysis here explains the earlier contradictory
observations~\cite{yang2017application} that linear normalization
often works better in applications than many other scaling methods,
even if it lacks convexity with respect to shifts~\cite{Survey2}.

The final contribution of the paper is to present and analyze the
remarkable capacity of the $W_2$-based inversion of sublayer recovery
with only the reflection data even when there is no seismic wave
returning to the surface from this domain. The conventional $L^2$ norm
does not perform well, and here it is not the issue of cycle
skipping. Both amplitude and frequency play a role. Compared to the
$W_2$ metric, the $L^2$ norm lacks sensitivity to small-amplitude
signals. We saw this in numerical tests and from classical refraction
analysis. The inherent insensitivity of the $L^2$ norm to
low-frequency contents of the residual is a primary reason 
that $L^2$-FWI often fails to recover the model
kinematics. $W_2$-based inversion captures the essential low-frequency
modes of the data residual, directly linked to the low-wavenumber
structures of the velocity model. This property is important for
applications of this type, where the initial model has poor background
velocity.

With this paper, the mathematical reasons for the favorable properties
of $W_2$-based inversion become quite clear. There are still several
other issues worth studying that could have important practical
implications. Examples are further analysis of other forms of optimal
transport techniques as, for example, unbalanced optimal transport and
$W_1$. The scalar wave equation is the dominating model in practice,
but elastic wave equations and other more realistic models are gaining
ground, and extending the analysis and best practices to these models
will be essential.

        





\ack We appreciate the comments and suggestions of the anonymous
referee(s). This work is supported in part by the National Science
Foundation through grants DMS-1620396 and DMS-1913129. We thank
Dr.~Sergey Fomel, Dr.~Lingyun Qiu, and Dr.~Kui Ren for constructive
discussions and the sponsors of the Texas Consortium for Computational
Seismology (TCCS) for financial support.



\frenchspacing

\begin{coninfo}
\vspace{-.63\baselineskip}\textsc{Yunan Yang}\\
Courant Institute\\
251 Mercer St., Office 1105A\\
New York, NY 10012\\
USA\\
  \eaddr{yunan.yang@nyu.edu}
  \pagebreak\\
  \textsc{Bj\"orn Engquist}\\
  Department of Mathematics\\
  \null\quad  and Oden Institute\\
  The University of Texas at Austin\\
  1 University Station C1200\\
  Austin, TX 78712\\
  USA\\
  \eaddr{engquist@\\oden.utexas.edu}
\end{coninfo}

\begin{thebibliography}{10}
  \providecommand{\url}[1]{\texttt{#1}}
  \providecommand{\urlprefix}{Available at: }
  \providecommand{\eprint}[2][]{\url{#2}}

\bibitem{ambrosio2013user}
Ambrosio, L.; Gigli, N. A user's guide to optimal transport. \emph{Modelling
  and optimisation of flows on networks},  1--155. Lecture Notes in Mathematics, 2062. Fond. CIME/CIME Found. Subser. Springer, Heidelberg, 2013. \doi{10.1007/978-3-642-32160-3_1}

\bibitem{ambrosio2011gradient}
Ambrosio, L.; Mainini, E.; Serfaty, S. Gradient flow of the
  {Chapman-Rubinstein-Schatzman} model for signed vortices. \emph{Ann. Inst. H. Poincar\'e Anal. Non Lin\'eaire}
  \textbf{28} (2011), no.~2, 217--246. \doi{10.1016/j.anihpc.2010.11.006}

\bibitem{billette20052004}
Billette, F.; Brandsberg-Dahl, S. The 2004 {BP} velocity benchmark. \emph{67th EAGE Conference \& Exhibition}, cp--1. European Association of
Geoscientists \& Engineers, 2005.

\bibitem{brenier1991polar}
Brenier, Y. {Polar factorization and monotone rearrangement of vector-valued
  functions}. \emph{Comm. Pure Appl. Math.}
  \textbf{44} (1991), no.~4, 375--417. \doi{10.1002/cpa.3160440402}

\bibitem{caffarelli1992regularity}
Caffarelli, L.~A. The regularity of mappings with a convex potential.
  \emph{J. Amer. Math. Soc.} \textbf{5} (1992), no.~1,
  99--104. \doi{10.2307/2152752}

\bibitem{cao2003adjoint}
Cao, Y.; Li, S.; Petzold, L.; Serban, R. Adjoint sensitivity analysis for
  differential-algebraic equations: The adjoint {DAE} system and its numerical
  solution. \emph{SIAM J. Sci. Comput.} \textbf{24} (2003),
  no.~3, 1076--1089. \doi{10.1137/S1064827501380630}

\bibitem{chavent2010nonlinear}
  Chavent, G. \emph{Nonlinear least squares for inverse problems.
Theoretical foundations and step-by-step guide for applications.} Scientific Computation. Springer, New York, 2009. 

\bibitem{chavent1975history}
Chavent, G.; Dupuy, M.; Lemmonier, P. History matching by use of optimal
  control theory. \emph{Society of Petroleum Engineers Journal} \textbf{15}
  (1975), no.~01, 74--86. \doi{10.2118/4627-PA}

\bibitem{chen2017quadratic}
Chen, J.; Chen, Y.; Wu, H.; Yang, D. The quadratic {W}asserstein metric for
  earthquake location. \emph{J. Comput. Phys.}  \textbf{373} (2018), 188--209. \doi{10.1016/j.jcp.2018.06.066}

\bibitem{chizat2015unbalanced}
Chizat, L.; Peyr{\'e}, G.; Schmitzer, B.; Vialard, F.-X. Unbalanced optimal transport: dynamic and Kantorovich formulations. \emph{J. Funct. Anal.}  \textbf{274} (2018), no.~11, 3090--3123. \doi{10.1016/j.jfa.2018.03.008}

\bibitem{Demanet2016}
Demanet, L. {Class Notes for Topics in Applied Mathematics: Waves and Imaging}.
  \emph{MIT Course Number 18.325}  (2016).

\bibitem{EFWass}
Engquist, B.; Froese, B.~D. Application of the {W}asserstein metric to seismic
  signals. \emph{Commun. Math. Sci.} \textbf{12} (2014),
  no.~5, 979--988. \doi{10.4310/CMS.2014.v12.n5.a7}

\bibitem{engquist2016optimal}
Engquist, B.; Froese, B.~D.; Yang, Y. Optimal transport for seismic full
  waveform inversion. \emph{Commun. Math. Sci.}
  \textbf{14} (2016), no.~8, 2309--2330. \doi{10.4310/CMS.2016.v14.n8.a9}

\bibitem{engquist1977absorbing}
Engquist, B.; Majda, A. Absorbing boundary conditions for numerical simulation
  of waves. \emph{Proc. Natl. Acad. Sci. USA} \textbf{74}
  (1977), no.~5, 1765--1766. \doi{10.1073/pnas.74.5.1765}

\bibitem{ERY2019}
Engquist, B.; Ren, K.; Yang, Y. The quadratic {W}asserstein metric for inverse
  data matching. \emph{Inverse Problems} \textbf{36} (2020), no.~5, 055001, 23 pp. \doi{10.1088/1361-6420/ab7e04}

\bibitem{Survey1}
Engquist, B.; Yang, Y. Seismic imaging and optimal transport.
  \emph{Commun. Inf. Syst.} \textbf{19} (2019), no.~2,
  95--145. \doi{10.4310/CIS.2019.v19.n2.a1}

\bibitem{Survey2}
Engquist, B.; Yang, Y. Seismic inversion and the data normalization for optimal
  transport. \emph{Methods Appl. Anal.} \textbf{26} (2019),
  no.~2, 133--147. \doi{10.4310/MAA.2019.v26.n2.a3}

\bibitem{gangbo2019unnormalized}
Gangbo, W.; Li, W.; Osher, S.; Puthawala, M. Unnormalized optimal transport.
  \emph{J. Comput. Phys.} \textbf{399} (2019), 108940, 17 pp. \doi{10.1016/j.jcp.2019.108940}

\bibitem{gauthier1986two}
Gauthier, O.; Virieux, J.; Tarantola, A. Two-dimensional nonlinear inversion of
  seismic waveforms: Numerical results. \emph{Geophysics} \textbf{51} (1986),
  no.~7, 1387--1403. \doi{10.1190/1.1442188}

\bibitem{glorot2011deep}
Glorot, X.; Bordes, A.; Bengio, Y. Deep sparse rectifier neural networks.
  \emph{Proceedings of the fourteenth international conference on artificial
  intelligence and statistics}, 315--323, 2011.

\bibitem{huber1973robust}
Huber, P.~J. {Robust regression: asymptotics, conjectures and Monte Carlo}.
  \emph{Ann. Statist.} \textbf{1} (1973), no.~5, 799--821.

\bibitem{jannane1989wavelengths}
Jannane, M.; Beydoun, W.; Crase, E.; Cao, D.; Koren, Z.; Landa, E.; Mendes, M.;
  Pica, A.; Noble, M.; Roeth, G.;  Singh, S.;  Snieder, R.; Tarantola, A.; Trezeguet, D.; Xie, M. Wavelengths of earth structures that
  can be resolved from seismic reflection data. \emph{Geophysics} \textbf{54}
  (1989), no.~7, 906--910. \doi{10.1190/1.1442719}

\bibitem{kantorovich1960mathematical}
Kantorovich, L.~V. Mathematical methods of organizing and planning production.
  \emph{Management Science} \textbf{6} (1959/1960), no.~4, 366--422. \doi{10.1287/mnsc.6.4.366}

\bibitem{KnottSmith}
Knott, M.; Smith, C.~S. On the optimal mapping of distributions. \emph{J. Optim. Theory Appl.} \textbf{43} (1984), no.~1, 39--49. \doi{10.1007/BF00934745}

\bibitem{kolouri2016transport}
Kolouri, S.; Park, S.; Thorpe, M.; Slep{\v{c}}ev, D.; Rohde, G.~K.
  Transport-based analysis, modeling, and learning from signal and data
  distributions. Preprint, 2016. \arxiv{1609.04767} [cs.CV]

\bibitem{lailly1983seismic}
Lailly, P. The seismic inverse problem as a sequence of before stack
migrations. \emph{Conference on inverse scattering: theory and application (Tulsa, Okla., 1983)}, 206-â220. SIAM, Philadelphia, 1983. 

\bibitem{lellmann2014imaging}
Lellmann, J.; Lorenz, D.~A.; Schonlieb, C.; Valkonen, T. Imaging with
  {K}antorovich-{R}ubinstein discrepancy. \emph{SIAM J. Imaging Sci.} \textbf{7} (2014), no.~4, 2833--2859.

\bibitem{liner2010wolf}
Liner, C.~L.; Bodmann, B.~G. The {Wolf} ramp: Reflection characteristics of a
  transition layer. \emph{Geophysics} \textbf{75} (2010), no.~5, A31--A35. \doi{10.1190/1.3476312}

\bibitem{liu1989limited}
Liu, D.~C.; Nocedal, J. On the limited memory {BFGS} method for large scale
optimization.
\emph{Math. Programming} {\bf 45} (1989), no.~3, (Ser.\ B), 503-â528. \doi{10.1007/BF01589116}

\bibitem{mainini2012description}
Mainini, E. A description of transport cost for signed measures. \emph{J. Math. Sci. (N.Y.)} \textbf{181} (2012), no.~6, 837--855. \doi{10.1007/s10958-012-0718-2}

\bibitem{mccann1995existence}
McCann, R.~J. Existence and uniqueness of monotone measure-preserving maps.
  \emph{Duke Math.~J.} \textbf{80} (1995), no.~2, 309--323. \doi{10.1215/S0012-7094-95-08013-2}

\bibitem{mcgillivray1990methods}
McGillivray, P.~R.; Oldenburg, D. Methods for calculating {F}r{\'e}chet
  derivatives and sensitivities for the non-linear inverse problem: A
  comparative study. \emph{Geophysical Prospecting} \textbf{38} (1990),
  no.~5, 499--524. \doi{10.1111/j.1365-2478.1990.tb01859.x}

\bibitem{messud2019multidimensional}
Messud, J.; Sedova, A. Multidimensional optimal transport for {3D FWI}:
  Demonstration on field data. \emph{81st EAGE Conference and Exhibition
  2019}, 1--5. European Association of Geoscientists \& Engineers, 2019. \doi{10.3997/2214-4609.201900869}

\bibitem{metivier2018optimal}
M{\'e}tivier, L.; Allain, A.; Brossier, R.; M{\'e}rigot, Q.; Oudet, E.;
  Virieux, J. Optimal transport for mitigating cycle skipping in full-waveform
  inversion: A graph-space transform approach. \emph{Geophysics} \textbf{83}
  (2018), no.~5, R515--R540. \doi{10.1190/geo2017-0807.1}

\bibitem{metivier2019graph}
M{\'e}tivier, L.; Brossier, R.; Merigot, Q.; Oudet, E. A graph space optimal
  transport distance as a generalization of $l^p$ distances: application to a
  seismic imaging inverse problem. \emph{Inverse Problems} \textbf{35} (2019),
  no.~8, 085001, 49 pp. \doi{10.1088/1361-6420/ab206f}

\bibitem{W1_3D}
M\'etivier, L.; Brossier, R.; M{\'e}rigot, Q.; Oudet, E.; Virieux, J. An
  optimal transport approach for seismic tomography: application to {3D} full
  waveform inversion. \emph{Inverse Problems} \textbf{32} (2016), no.~11,
  115008, 36 pp. \doi{10.1088/0266-5611/32/11/115008}
  
\bibitem{W1_2D}
M{\'e}tivier, L.; Brossier, R.; M{\'e}rigot, Q.; Oudet, E.; Virieux, J.
  Measuring the misfit between seismograms using an optimal transport distance:
  application to full waveform inversion. \emph{Geophysical Journal
  International} \textbf{205} (2016), no.~1, 345--377. \doi{10.1093/gji/ggw014}



\bibitem{Monge}
Monge, G. M\'{e}moire sur la th\'{e}orie des d\'{e}blais et de remblais.
  \emph{Histoire de l'AcadÃ©mie royale des sciences, avec les mÃ©moires de mathÃ©matique et de physique}
  (1781), 666--704.

\bibitem{peyre2018comparison}
Peyre, R. Comparison between ${W}_2$ distance and $\dot{H}^{-1}$ norm, and
  localization of {Wasserstein} distance. \emph{ESAIM Control Optim. Calc. Var.}  (2018), no.~4, 1489--1501. \doi{10.1051/cocv/2017050}
  
\bibitem{peyre2019computational}
Peyr{\'e}, G.; Cuturi, M. Computational optimal transport: with applications to data science. \emph{Foundations
  and Trends in Machine Learning} \textbf{11} (2019), no.~5-6, 355--607. \doi{10.1561/2200000073}



\bibitem{pladys2019assessment}
Pladys, A.; Brossier, R.; Irnaka, M.; Kamath, N.; M{\'e}tivier, L. Assessment
  of optimal transport based {FWI: 3D OBC Valhall case study}. \emph{SEG
  Technical Program Expanded Abstracts 2019}, 1295--1299. Society of
  Exploration Geophysicists, 2019.

\bibitem{Plessix}
Plessix, R.-E. A review of the adjoint-state method for computing the gradient
  of a functional with geophysical applications. \emph{Geophysical Journal
  International} \textbf{167} (2006), no.~2, 495--503. \doi{10.1111/j.1365-246X.2006.02978.x}

\bibitem{poncet2018fwi}
Poncet, R.; Messud, J.; Bader, M.; Lambar{\'e}, G.; Viguier, G.; Hidalgo, C.
  FWI with optimal transport: a 3D implementation and an application on a field
  dataset. \emph{80th EAGE Conference and Exhibition 2018}, 1--5. European
  Association of Geoscientists \& Engineers, 2018. \doi{10.3997/2214-4609.201801029}

\bibitem{pratt1990inverse1}
Pratt, R.~G.; Worthington, M. {Inverse theory applied to multi-source
  cross-hole tomography. Part 1: Acoustic wave-equation method}.
  \emph{Geophysical Prospecting} \textbf{38} (1990), no.~3, 287--310. \doi{10.1111/j.1365-2478.1990.tb01846.x}


\bibitem{Puthawala2018}
Puthawala, M.~A.; Hauck, C.~D.; Osher, S.~J. Diagnosing forward operator error
  using optimal transport. \emph{J. Sci. Comput.} \textbf{80} (2019), no.~3, 1549--1576. \doi{10.1007/s10915-019-00989-0}

\bibitem{qiu2017full}
Qiu, L.; Ramos-Mart{\'\i}nez, J.; Valenciano, A.; Yang, Y.; Engquist, B.
  Full-waveform inversion with an exponentially encoded optimal-transport norm.
    \emph{SEG Technical Program Expanded Abstracts 2017}, 1286--1290,
  Society of Exploration Geophysicists, 2017. \doi{10.1190/segam2017-17681930.1}

\bibitem{Ramos2018}
Ramos-Mart{\'\i}nez, J.; Qiu, L.; Kirkeb{\o}, J.; Valenciano, A.; Yang, Y.
  Long-wavelength {FWI} updates beyond cycle skipping.   \emph{SEG Technical
  Program Expanded Abstracts 2018}, 1168--1172. Society of Exploration
  Geophysicists, 2018. \doi{10.1190/segam2018-2998433.1}

\bibitem{ribodetti2011joint}
Ribodetti, A.; Operto, S.; Agudelo, W.; Collot, J.-Y.; Virieux, J. Joint ray +
  Born least-squares migration and simulated annealing optimization for
  target-oriented quantitative seismic imaging. \emph{Geophysics} \textbf{76}
  (2011), no.~2, R23--R42. \doi{10.1190/1.3554330}

\bibitem{ricker1943further}
Ricker, N. Further developments in the wavelet theory of seismogram structure.
  \emph{Bulletin of the Seismological Society of America} \textbf{33} (1943),
  no.~3, 197--228.

\bibitem{scarinci2019robust}
Scarinci, A.; Fehler, M.; Marzouk, Y. Robust Bayesian moment tensor inversion
  using transport-Lagrangian distances.   \emph{SEG Technical Program Expanded
  Abstracts 2019}, 2123--2127. Society of Exploration Geophysicists, 2019. \doi{10.1190/segam2019-3215986.1}

\bibitem{sun2019stereo}
Sun, B.; Alkhalifah, T. Stereo optimal transport of the matching filter.  
  \emph{SEG Technical Program Expanded Abstracts 2019}, 1285--1289. Society
  of Exploration Geophysicists, 2019. \doi{10.1190/segam2019-3199662.1}

\bibitem{sun2019application}
Sun, B.; Alkhalifah, T. The application of an optimal transport to a
  preconditioned data matching function for robust waveform inversion.
  \emph{Geophysics} \textbf{84} (2019), no.~6, R923--R945. \doi{10.1190/geo2018-0413.1}



\bibitem{TarantolaInversion}
Tarantola, A. Inversion of seismic reflection data in the acoustic
  approximation. \emph{Geophysics} \textbf{49} (1984), no.~8, 1259--1266. \doi{10.1190/1.1441754}

\bibitem{tarantola2005inverse}
  Tarantola, A.
\emph{Inverse problem theory and methods for model parameter estimation}. Society for Industrial and Applied Mathematics (SIAM), Philadelphia, 2005. \doi{10.1137/1.9780898717921}

\bibitem{thorpe2017transportation}
Thorpe, M.; Park, S.; Kolouri, S.; Rohde, G.~K.; Slep{\v{c}}ev, D. A
  transportation $L^p$ distance for signal analysis. \emph{J. Math. Imaging Vision} \textbf{59} (2017), no.~2, 187--210. \doi{10.1007/s10851-017-0726-4}

\bibitem{versteeg1994marmousi}
Versteeg, R. {The Marmousi experience: Velocity model determination on a
  synthetic complex data set}. \emph{The Leading Edge} \textbf{13} (1994),
  no.~9, 927--936. \doi{10.1190/1.1437051}

\bibitem{Villani}
Villani, C. \emph{Topics in optimal transportation}. Graduate Studies in
  Mathematics, 58. American Mathematical Society, Providence, RI, 2003. \doi{10.1090/gsm/058}

\bibitem{villani2008optimal}
  Villani, C.
\emph{Optimal transport. Old and new}. Grundlehren der mathematischen Wissenschaften, 338. Springer, Berlin, 2009. \doi{10.1007/978-3-540-71050-9}

\bibitem{Virieux2017}
Virieux, J.; Asnaashari, A.; Brossier, R.; M{\'e}tivier, L.; Ribodetti, A.;
  Zhou, W. An introduction to full waveform inversion.   \emph{Encyclopedia of
  Exploration Geophysics}, R1--1. Society of Exploration Geophysicists,
  2017.

\bibitem{virieux2009overview}
Virieux, J.; Operto, S. An overview of full-waveform inversion   exploration
  geophysics. \emph{Geophysics} \textbf{74} (2009), no.~6, WCC1--WCC26. \doi{10.1190/1.3238367}

\bibitem{vogel2002computational}
  Vogel, C.~R.
\emph{Computational methods for inverse problems.}
Frontiers in Applied Mathematics, 23. Society for Industrial and Applied Mathematics (SIAM), Philadelphia, 2002. \doi{10.1137/1.9780898717570}

\bibitem{wang2019adaptive}
Wang, D.; Wang, P. Adaptive quadratic {W}asserstein full-waveform inversion.  
  \emph{SEG Technical Program Expanded Abstracts 2019}, 1300--1304. Society
  of Exploration Geophysicists, 2019. \doi{10.1190/segam2019-3214402.1}

\bibitem{wolf1937reflection}
Wolf, A. The reflection of elastic waves from transition layers of variable
  velocity. \emph{Geophysics} \textbf{2} (1937), no.~4, 357--363. \doi{10.1190/1.1438104}

\bibitem{wolfe1969convergence}
Wolfe, P. Convergence conditions for ascent methods. \emph{SIAM Rev.}
  \textbf{11} (1969), 226--235. \doi{10.1137/1011036}

\bibitem{yangletter}
Yang, Y.; Engquist, B. Analysis of optimal transport and related misfit
  functions in full-waveform inversion. \emph{Geophysics} \textbf{83} (2018),
  no.~1, A7--A12. \doi{10.1190/geo2017-0264.1}

\bibitem{yang2017application}
Yang, Y.; Engquist, B.; Sun, J.; Hamfeldt, B.~F. Application of optimal
  transport and the quadratic {Wasserstein} metric to full-waveform inversion.
  \emph{Geophysics} \textbf{83} (2018), no.~1, R43--R62. \doi{10.1190/geo2016-0663.1}

\bibitem{zhou2018wasserstein}
Zhou, D.~T.; Chen, J.; Wu, H.; Yang, D.; Qiu, L.~Y. The {Wasserstein-Fisher-Rao}
metric for waveform based earthquake location. Preprint, 2018. \arxiv{1812.00304} [math.NA]
\end{thebibliography}
\end{document}